\newtheorem{theorem}{Theorem}[section]
\newtheorem{lemma}[theorem]{Lemma}
\theoremstyle{definition}
\newtheorem{remark}{{\it Remark}}[section]
\numberwithin{equation}{section}
\newcommand{\nn}{\nonumber}
\newcommand{\Vc}{{\mathbf{c}}}
\newcommand{\Vd}{{\mathbf{d}}}
\newcommand{\Vi}{{\mathbf{i}}}
\newcommand{\Vl}{{\mathbf{l}}}
\newcommand{\Vr}{{\mathbf{r}}}
\newcommand{\Vu}{{\mathbf{u}}}
\newcommand{\Vv}{{\mathbf{v}}}
\newcommand{\Be}{{\boldsymbol{e}}}
\newcommand{\Bf}{{\boldsymbol{f}}}
\newcommand{\Bg}{{\boldsymbol{g}}}
\newcommand{\Bn}{{\boldsymbol{n}}}
\newcommand{\Br}{{\boldsymbol{r}}}
\newcommand{\Bu}{{\boldsymbol{u}}}
\newcommand{\Bv}{{\boldsymbol{v}}}
\newcommand{\Bw}{{\boldsymbol{w}}}
\newcommand{\BH}{{\boldsymbol{H}}}
\newcommand{\BJ}{{\boldsymbol{J}}}
\newcommand{\BL}{{\boldsymbol{L}}}
\newcommand{\BM}{{\boldsymbol{M}}}
\newcommand{\BP}{{\boldsymbol{P}}}
\newcommand{\BU}{{\boldsymbol{U}}}
\newcommand{\BV}{{\boldsymbol{V}}}
\newcommand{\Ce}{{\mathcal E}}
\newcommand{\Ct}{{\mathcal T}}
\newcommand{\curl}{\Vc\Vu\Vr\Vl\,}
\newcommand{\divv}{\Vd\Vi\Vv \,}
\newcommand{\im}{\mathbf{i}}
\begin{document}

\title{An absolutely stable $hp$-HDG method for 
the time-harmonic Maxwell equations with high wave number}


\author{Peipei Lu}
\address{School of Mathematics Sciences, Soochow University, Suzhou, 215006, China}
\email{pplu@suda.edu.cn}

\author{Huangxin Chen}
\address{School of Mathematical Sciences and Fujian Provincial Key Laboratory on Mathematical Modeling and 
High Performance Scientific Computing, Xiamen University, Fujian, 361005, China}
\email{chx@xmu.edu.cn}

\author{Weifeng Qiu}
\address{Department of Mathematics, City University of Hong Kong, 83 Tat Chee Avenue, Kowloon, Hong Kong, China}
\email{weifeqiu@cityu.edu.hk}
\thanks{Corresponding author: Weifeng Qiu}

\thanks{All authors contributed equally in this paper.
The work of the first author was supported by the NSF of China (Grant No.11401417), the Program of  
Natural Science Research of Jiangsu Higher Education Institutions of China (Grant No. 14KJB110021)
and Jiangsu Provincial Key Laboratory for Numerical Simulation of Large Scale Complex Systems (No. 201404). 
The work of the second author was supported by the NSF of China (Grant No. 11201394) and the Fundamental 
Research Funds for the Central Universities (Grant No. 20720150005). The work of the third author was partially 
supported by a grant from the Research Grants Council of the Hong Kong Special Administrative Region, China 
(Project No. CityU 11302014). }


\begin{abstract}
We present and analyze a hybridizable discontinuous Galerkin (HDG) method for the time-harmonic Maxwell equations. 
The divergence-free condition is enforced on the electric field, then a Lagrange multiplier is introduced, and the problem 
becomes the solution of a mixed curl-curl formulation of the Maxwell's problem. The method is shown to be an absolutely 
stable HDG method for the indefinite time-harmonic Maxwell equations with high wave number. By exploiting the duality 
argument, the dependence of convergence of the HDG method on the wave number $\kappa$, the mesh size $h$ and 
the polynomial order $p$ is obtained. Numerical results are given to verify the theoretical analysis.
\end{abstract}

\subjclass[2000]{65N12,65N15, 65N30}

\keywords{hybridizable discontinuous Galerkin method, time-harmonic Maxwell equations, Lagrange multiplier, 
high wave number}

\maketitle

\markboth{P. Lu, H. Chen and W. Qiu}{HDG method for Maxwell equations}

\section{Introduction}\label{introduction}
The time-harmonic Maxwell boundary value problem reads as follows:
\begin{subequations}
\label{pde_original}
\begin{align}
\label{pde_original_1}
\curl\curl\Bu - \kappa^2 \Bu &= \widetilde{\Bf} \qquad \rm{in}\ \Omega,\\
\label{BC-PDE}
\curl \Bu \times \Bn + \im \kappa \Bu^t &= \widetilde{\Bg} \qquad \rm{on}\ \partial\Omega,
\end{align}
\end{subequations}
where $\Omega\subset \mathbb{R}^3$ is a bounded, uniformly star-shaped polyhedral domain, 
the wave number $\kappa$ is real and positive, $\im$ denotes the imaginary unit, $\Bn$ denotes the unit outward normal 
to $\partial \Omega$, and $\Bu^t = ( \Bn \times \Bu ) \times \Bn$ denotes the tangential component of the electric field 
$\Bu$. Equation (\ref{BC-PDE}) is the standard impedance boundary condition which requires $\widetilde{\Bg} \cdot \Bn = 0$, 
thus, $\widetilde{\Bg}^t = \widetilde{\Bg}$. The above Maxwell equations are of considerable importance in the engineering 
and scientific computation. In this paper we assume the current density is divergence-free (namely $\divv\widetilde{\Bf} =0$), 
hence the electric field $\Bu$ is also divergence-free. 

The Maxwell's operator is strongly indefinite for high wave number $\kappa$, which brings difficulties both in 
theoretical analysis and numerical simulation. Various numerical methods which include finite element methods 
(FEM) \cite{Nedelec80,Nedelec86,Hiptmair02,Hiptmair11,Brenner07,Zhong09}, discontinuous Galerkin (DG) 
methods \cite{Perugia02,Perugia03, Cockburn04,Houston04,Houston05,Nguyena11,R.Hiptmair13,FW2014} 
and weak Galerkin FEM method \cite{MWYS15} have been developed to solve the Maxwell's problem. In particular, 
Feng and Wu \cite{FW2014} recently proposed and analyzed an interior penalty discontinuous Galerkin (IPDG) method 
for the problem (\ref{pde_original}) with high wave number, which is uniquely solvable without any mesh constraint. 
DG methods have several attractive features which include the capabilities to handle complex geometries, to provide 
high-order accurate solutions, etc. But the dimension of approximation DG space is much larger than the 
dimension of the corresponding conforming space. Hybridizable discontinuous Galerkin (HDG) methods \cite{Cockburn09} 
were recently introduced to address this issue. The HDG methods retain the advantages of standard DG methods, 
and the resulting system is only due to the unknowns on the skeleton of the mesh.

Two HDG methods were presented in \cite{Nguyena11} for the numerical solution of the Maxwell problem. 
The first HDG method enforces the divergence-free condition on the electric field and introduces a Lagrange 
multiplier. It produces a linear system for the degrees of freedom (DOF) of the approximate traces of both the 
tangential component of the vector field and the Lagrange multiplier. The second HDG method does not enforce 
the divergence-free condition and results in a linear system only for the DOF of the approximate trace of the 
tangential component of the vector field. Compared to the IPDG method for the time-harmonic Maxwell equations 
in \cite{Houston05,FW2014}, the two HDG methods have less globally coupled unknowns. The well-posedness, 
conservativity and consistence of the two HDG methods, together with a numerical demonstration, have been shown 
in \cite{Nguyena11}. However, no convergence 
analysis is given in \cite{Nguyena11}. Recently, the $h$-convergence analysis of the second HDG method was considered in \cite{FLX2015}. In this paper we are interested in the $hp$-convergence analysis for the first HDG method 
mentioned in \cite{Nguyena11} which solves a mixed curl-curl formulation of the time-harmonic Maxwell equation
\begin{subequations}
\label{pde_mixed}
\begin{align}
\label{maxwell_p}
\curl\curl\Bu - \kappa^2 \Bu + \nabla \widetilde{\sigma}&= \widetilde{\Bf} \qquad \rm{in}\ \Omega,\\
\divv \Bu &= 0 \qquad \rm{in}\ \Omega,\\
\label{BC-PDE-1}
\curl \Bu \times \Bn + \im \kappa \Bu^t &= \widetilde{\Bg} \qquad \rm{on}\ \partial\Omega, \\
\label{BC_p}
\widetilde{\sigma}&= 0 \qquad \rm{on}\ \partial\Omega,
\end{align}
\end{subequations}
where $\widetilde{\sigma}$ is a scalar Lagrange multiplier used to enforce the divergence-free condition. 
Taking the divergence of the equation (\ref{maxwell_p}) yields $ \Delta \widetilde{\sigma} = 0$, which 
together with the boundary condition (\ref{BC_p}) implies that $\widetilde{\sigma} = 0$ throughout the 
domain. Hence, under the divergence-free condition of the current density, the equations (\ref{pde_original}) 
and (\ref{pde_mixed}) are equivalent.

We aim to develop an HDG method which is absolutely stable without any mesh constraint for the above mixed 
curl-curl formulation (\ref{pde_mixed}) and reveal the dependence of convergence for the HDG method on the wave 
number $\kappa$, the mesh size $h$ and the polynomial order $p$. We mention that only simple $L^2$-projections 
are used in our analysis which is different from the projection-based error analysis in \cite{Cockburn10}, and 
the $p$-dependence of the stability estimate and the convergence can be derived. 
We also mention that the stabilization parameters in our HDG method are different from those in \cite{Nguyena11}. The focus of our analysis is to apply the duality argument to establish the rigorous stability estimate and error analysis for the HDG method proposed for the mixed curl-curl formulation (\ref{pde_mixed}). Intrinsically, the regularity estimate of the solution of the dual problem used in this paper can be obtained due to introduction of a Lagrange multiplier in the mixed curl-curl formulation. This is also the reason why the Helmholtz decomposition technique can be avoided in the analysis and the $p$-estimate can be derived. We first apply the duality argument to obtain the estimates for $\|\Bu_h\|_{\Ct_h}$ and the error $\|\Bu-\Bu_h\|_{\Ct_h}$, then the estimates for other variables of the HDG method can be further obtained. Up to our best knowledge, we give the first $p$-estimate of numerical methods using piecewise polynomial solution spaces for solving the time-harmonic Maxwell equations with high wave number.

The remainder of this paper is the following. We give some notations, introduce the HDG method for the mixed curl-curl formulation of the time-harmonic Maxwell equations (\ref{pde_mixed}) and present the main results of stability estimates and error estimates in the next section. Section \ref{stability_estimate} and section \ref{error_analysis} are devoted to providing detailed proofs of the stability estimates and error estimates respectively. In section \ref{ideal_case}, we discuss the stability estimates and error estimates for the HDG method under some ideal assumptions of the problem (\ref{pde_original}) and the associated dual problem. In the final section, we give some numerical results to confirm our theoretical analysis.

\section{Notation, HDG method and main results}\label{HDG_sec2}

Let $\Bf:=-\Vi \widetilde{\Bf}, \sigma:=-\Vi \widetilde{\sigma}$ and $\Bg := -\Vi \widetilde{\Bg}$.
The HDG scheme for the equation (\ref{pde_mixed}) is based on a first-order system of this equation, 
which can be rewritten in a mixed formulation as follows:
\begin{subequations}
\label{pde_mixed_first_order}
\begin{align}
\Vi \Bw-\curl \Bu &= 0 \qquad \rm{in}\ \Omega,\\
\curl\Bw + \Vi \kappa^2 \Bu + \nabla \sigma&= \Bf \qquad \rm{in}\ \Omega,\\
\divv \Bu &= 0 \qquad \rm{in}\ \Omega,\\
\Bw \times \Bn +  \kappa \Bu^t &= \Bg \qquad \rm{on}\ \partial\Omega, \\
\sigma&= 0 \qquad \rm{on}\ \partial\Omega.
\end{align}
\end{subequations}

Throughout the paper we use the standard notations and definitions for Sobolev spaces (see, e.g., Adams \cite{Adams}). 
We denote by $\Ct_h$ a conforming triangulation of $\Omega$ made of shape-regular simplicial elements. 
We denote by $h_T$ the diameter of $T \in \Ct_h$ and $h = \max_{T \in \Ct_h} h_T$, the collection of faces 
is denoted by $\Ce_h$, with the collection of interior faces by $\Ce^0_h$ and the collection of boundary faces 
by $\Ce^\partial_h$, the collection of element boundaries by $\partial \Ct_h :=\{ \partial T | T \in \Ct_h \}$. 
We let $C$ denote a positive number independent of the mesh size, polynomial order and wave number, whose value can take on different values in different occurrences. The corresponding finite element spaces for the HDG 
method for the first-order system (\ref{pde_mixed_first_order}) are defined to be
\begin{align*}
\BV_h &: = \{ \Br \in \BL^2(\Omega) \, : \,  \Br|_T \in \BP_p(T), \forall \, T \in \Ct_h  \},\\
\BU_h &: = \{ \Bv \in \BL^2(\Omega) \, : \,  \Bv|_T \in \BP_p(T), \forall \, T \in \Ct_h  \},\\
\BM^t_h &: = \{ \boldsymbol{\eta} \in \BL^2(\Ce_h) \, : \,   \boldsymbol{\eta}|_F \in \BP_p(F),   
(\boldsymbol{\eta} \cdot \Bn)|_F=0,   \forall \, F \in \Ce_h  \},\\
Q_h &: = \{ q \in L^2(\Omega) \, : \, q|_T \in P_p(T) ,  \forall \, T \in \Ct_h \},\\
M_h &:= \{\xi  \in L^2(\Ce_h) \, : \, \xi|_F \in P_p(F), \forall \, F \in \Ce_h \},
\end{align*}
where the polynomial order $p\geq 1$, $\BL^2(\Omega) = [L^2(\Omega)]^3$, $\BL^2(\Ce_h) = [L^2(\Ce_h)]^3$, 
$ \BP_p(T)=[ P_p(T)]^3$ and $ \BP_p(F) =  [P_p(F)]^3$. Here, $P_p(D)$ denotes the space of complex-valued polynomials of degree at most $p$ on $D$. Let $P_M$ denote the standard $L^2$-projection operator from 
$L^2(\Ce_h)$ onto $P_p(\Ce_h)$. In addition, we set $M_h(g):=\{ \xi \in M_h \, : \, 
\xi = P_M g  \  \rm{on}\ \partial\Omega\}$.  Similarly,  $\BP_{\BM}$ denotes the standard $L^2$-projection operator from 
$\BL^2(\Ce_h)$ onto $\BP_p(\Ce_h)$. We use $\boldsymbol{\Pi_V}, \boldsymbol{\Pi_U},\Pi_Q$ to denote the 
standard $L^2$-projection onto $\BV_h,\BU_h$ and $Q_h$ respectively. In the analysis, we shall use the following 
approximation results of $L^2$-projections:
\begin{subequations}
\begin{align} \label{es_pj_1}
\| \Bw-\boldsymbol{\Pi_V}\Bw \|_{\Ct_h}& \leq C{h}^t/{p}^t \| \Bw \|_{t,\Omega}  \qquad   0\leq t\leq p+1,\\
\label{es_pj_2}
\| \Bu-\boldsymbol{\Pi_U}\Bu \|_{\Ct_h} &\leq C{h}^s/{p}^s  \| \Bu \|_{s,\Omega} \qquad 0\leq s\leq p+1,\\
\label{es_pj_3}
\| \sigma-{\Pi_Q}\sigma \|_{\Ct_h} &\leq C{h}^\beta/{p}^\beta \| \sigma \|_{\beta,\Omega} \qquad 0\leq \beta\leq p+1,\\
\label{es_pj_4}
\| \Bw-\boldsymbol{\Pi_V}\Bw \|_{0,\partial T} &\leq C{h}^{t-\frac{1}{2}}/{p}^{t-\frac{1}{2}} \| \Bw \|_{t,T} 
\qquad \forall T \in \Ct_h,\ 0\leq t\leq p+1,\\
\label{es_pj_5}
\| \Bu-\boldsymbol{\Pi_U}\Bu \|_{0,\partial T} &\leq C{h}^{s-\frac{1}{2}}/{p}^{s-\frac{1}{2}} \| \Bu \|_{s,T} 
\qquad \forall T \in \Ct_h,\ 0\leq s\leq p+1,\\
\label{es_pj_6}
\| \sigma-{\Pi_Q}\sigma \|_{0,\partial T} &\leq C{h}^{\beta-\frac{1}{2}}/{p}^{\beta-\frac{1}{2}} \| \sigma \|_{\beta,T} 
\qquad \forall T \in \Ct_h,\ 0\leq \beta\leq p+1,\\
\label{es_pj_7}
\| \Bw-\boldsymbol{P_M}\Bw \|_{\partial \Ct_h} &\leq C{h}^{t-\frac{1}{2} }/{p}^{t-\frac{1}{2} } \| \Bw \|_{t,\Omega} 
\qquad 0\leq t\leq p+1,\\
\label{es_pj_8}
\| \Bu-\boldsymbol{P_M}\Bu \|_{\partial \Ct_h} &\leq C{h}^{s-\frac{1}{2} }/{p}^{s-\frac{1}{2} } \| \Bu \|_{s,\Omega} 
\qquad 0\leq s\leq p+1,\\
\label{es_pj_9}
\| \sigma-{P_M}\sigma \|_{\partial \Ct_h} &\leq C{h}^{\beta-\frac{1}{2}}/{p}^{\beta-\frac{1}{2}} \| \sigma \|_{\beta,\Omega} 
\qquad 0\leq \beta\leq p+1.
\end{align}
\end{subequations}
Here $\|\cdot\|_{\Ct_h} = ( \sum_{T\in \Ct_h} \|\cdot\|^2_{0,T} )^{\frac{1}{2}}$ and $\|\cdot\|_{\partial \Ct_h} = ( \sum_{T\in \Ct_h} \|\cdot\|^2_{0,\partial T} )^{\frac{1}{2}}$. The above results hold due to the $hp$ approximation theory of polynomials and trace inequality when $\Ct_h$ 
consists of shape-regular simplices (cf. \cite{Schwab98,FW2011,Chernov2012,Egger13,Melenk14,Cangiani14}). 
The above $h$-dependence approximation results hold when $\Ct_h$ consists of shape-regular polyhedral elements. 
Thus when we only consider $\kappa$- and $h$-dependence in our analysis, $\Ct_h$ can be a conforming mesh 
consisting of shape-regular polyhedral elements. This is due to the fact that only the approximation results (\ref{es_pj_1})-(\ref{es_pj_3}) have been deduced recently in the literature (cf. \cite{Cangiani14}) when the mesh 
consists of general polyhedral elements. The $p$-dependence of convergence for the trace estimate of the polynomial 
$L^2$-projection (cf. (\ref{es_pj_4})-(\ref{es_pj_6})) was first studied in \cite{Chernov2012} on simplicial element, and as far as we know, no extension of the estimates (\ref{es_pj_4})-(\ref{es_pj_6}) to the $L^2$-projection defined on the general polyhedral element has been obtained.

We define the bilinear forms
\begin{align*}
(\boldsymbol{\eta},\boldsymbol{\zeta})_{\mathcal T_h}:=\sum_{T\in \mathcal
T_h}(\boldsymbol{\eta},\boldsymbol{\zeta})_{T}, \quad \langle \boldsymbol{\eta},\boldsymbol{\zeta} 
\rangle_{  \partial \mathcal T_h} := \sum_{T\in \mathcal T_h}\langle \boldsymbol{\eta},\boldsymbol{\zeta}
\rangle_{\partial T}\\ (\eta,\zeta)_{\mathcal T_h}:=\sum_{T\in \mathcal
T_h}(\eta,\zeta)_T,  \quad \ \langle \eta,\zeta \rangle_{\partial \mathcal
T_h}:=\sum_{T\in \mathcal T_h}\langle \eta,\zeta\rangle_{\partial T},
\end{align*}
where $(\boldsymbol{\eta},\boldsymbol{\zeta})_{D}$ (respectively, $(\eta,\zeta)_{D}$) denotes the integral of 
$\boldsymbol{\eta}\cdot \overline{\boldsymbol{\zeta}}$ (respectively, $\eta\overline{\zeta}$) over 
$D \subset \mathbb{R}^3$ and $\langle\boldsymbol{\eta},\boldsymbol{\zeta}\rangle_{D}$ (respectively, 
$\langle\eta,\zeta\rangle_{D}$) denotes the integral of $\boldsymbol{\eta}\cdot\overline{\boldsymbol{\zeta}}$ 
(respectively, $\eta\overline{\zeta}$) over $D \subset  \mathbb{R}^2$.

The HDG method for the first-order system (\ref{pde_mixed_first_order}) yields a solution 
$(\Bw_h,\Bu_h,\widehat{\Bu}^t_h,\sigma_h, \widehat{\sigma}_h) \in \BV_h \times \BU_h 
\times \BM^t_h\times  Q_h \times M_h(0)$ such that
\begin{subequations}
\label{discrete_mixed_form}
\begin{align}
\label{discrete_mixed_form_a}
&(\Vi \Bw_h,\Br_h)_{\Ct_h} - (\Bu_h,\curl \Br_h)_{\Ct_h} + \langle\widehat{\Bu}^t_h\times \Bn, \Br_h \rangle_{\partial \Ct_h}  =0 \\
\label{discrete_mixed_form_b}
&(\Bw_h,\curl \Bv_h)_{\Ct_h} - \langle \widehat{\Bw}^t_h \times \Bn,\Bv_h \rangle_{\partial \Ct_h} 
+ ( \Vi \kappa^2 \Bu_h,\Bv_h )_{\Ct_h}\nn\\
&\qquad\qquad\qquad  \ \ - ( \sigma_h,\divv \Bv_h )_{\Ct_h} + \langle \widehat{\sigma}_h, \Bv_h 
\cdot \Bn \rangle_{\partial \Ct_h} = (\Bf, \Bv_h)_{\Ct_h},\\
\label{discrete_mixed_form_c}
&-(\Bu_h,\nabla q_h)_{\Ct_h} + \langle \widehat{\Bu}^n_h \cdot \Bn, q_h \rangle_{\partial \Ct_h} = 0,\\
\label{discrete_mixed_form_d}
&\langle \widehat{\Bw}^t_h \times \Bn, \boldsymbol{\eta}_h \rangle_{\partial \Ct_h \setminus \partial \Omega}=0,\\
\label{discrete_mixed_form_e}
& \langle \widehat{\Bw}^t_h \times \Bn, \boldsymbol{\eta}_h \rangle_{\partial \Omega} 
+ \langle \kappa \widehat{\Bu}^t_h, \boldsymbol{\eta}_h \rangle_{\partial \Omega} 
= \langle \Bg,  \boldsymbol{\eta}_h \rangle_{\partial \Omega},\\
\label{discrete_mixed_form_f}
& \langle  \widehat{\Bu}^n_h \cdot \Bn, {\xi}_h\rangle_{\partial \Ct_h} = 0,
\end{align}
\end{subequations}
for all $(\Br_h, \Bv_h, \boldsymbol{\eta}_h, q_h,\xi_h) \in \BV_h \times \BU_h 
\times \BM^t_h \times  Q_h \times M_h(0)$, where
\begin{align}
\label{hat_definition}
\widehat{\Bw}_h  = \Bw_h + \tau_t(\Bu^t_h-\widehat{\Bu}^t_h)\times \Bn, 
\quad \widehat{\Bu}^n_h = {\Bu}^n_h + \tau_n(\sigma_h-\widehat{\sigma}_h) \Bn.
\end{align}
Here, for any vector $\Br \in \mathbb{R}^3$, $\Br^n = (\Br \cdot \Bn) \Bn$ denotes the normal component of 
the vector $\Br$. The parameters $\tau_t$ and $\tau_n$ are the so-called {\it local stabilization parameters} which have an
important effect on both the stability of the solution and the accuracy of the HDG scheme. We choose $\tau_t = p/h$ and $\tau_n = (1+\kappa) h/p$ in this paper.

\begin{remark}
The mixed curl-curl formulation (\ref{pde_mixed}) can also be applied to the Maxwell equations (\ref{pde_original}) 
with $\divv \Bf \neq 0$. In this case $\divv \Bu = {\theta} \neq 0$ with ${\theta}$ a given variable. 
Indeed, taking the divergence of the equation (\ref{pde_original_1}) implies that ${\theta}$ satisfies that 
$-\kappa^2 {\theta} = \divv \widetilde{\Bf}$. Then taking the divergence of the equation (\ref{maxwell_p}) 
again yields $ \Delta \widetilde{\sigma} =\divv \widetilde{\Bf} + \kappa^2{\theta} =  0$, which together with 
the boundary condition (\ref{BC_p}) also implies that $\widetilde{\sigma} = 0$. Hence, the HDG scheme in this paper can 
also be used for the Maxwell equations (\ref{pde_original}) with $\divv \widetilde{\Bf} \neq 0$. However, we mention that if the HDG method is used with non divergence-free current density, the regularity estimates in \cite{Hiptmair11} can not be applied. Thus, the theoretical analysis throughout this paper holds only under the assumption of divergence-free current density in (\ref{pde_original_1}).
\end{remark}

When $\Bf$ is divergence-free and $\Bg\in
\BH_T^{\frac{1}{2}}(\partial\Omega)$, the solution of the first-order system (\ref{pde_mixed_first_order}) satisfies that $\Bu \in \BH^{\frac{1}{2}+\alpha}(\Omega)$ and $\Bw \in \BH^{\frac{1}{2}+\alpha}(\Omega)$, and there holds (cf. \cite{Hiptmair11})
\begin{align}
\kappa \| \Bu \|_{\frac{1}{2}+\alpha,\Omega} + \|\Bw\|_{\frac{1}{2}+\alpha,\Omega} \leq C ( 1 + \kappa ) \BM(\Bf,\Bg) +C \|\Bg\|_{\frac{1}{2},\partial \Omega}. \label{sta_weak_new}
\end{align}
To state our main results, we need a regularity assumption of the dual problem. Let $\boldsymbol{\Psi}$ and $\varphi$ be 
the solution of the following dual problem:
\begin{subequations}
\label{dual_problem}
\begin{align}
\label{dual_p_1}
\curl \curl \boldsymbol{\Psi} - \kappa^2 \boldsymbol{\Psi} + \Vi \nabla \varphi &= \Vi \BJ  \quad  \ \rm{in}\ \Omega,\\
\label{dual_p_2}
\divv \boldsymbol{\Psi} &= 0\qquad \rm{in}\ \Omega,\\
\label{bc_dual_3}
\curl \boldsymbol{\Psi} \times \Bn - \Vi \kappa \boldsymbol{\Psi}^t &= 0 \qquad \rm{on}\ \partial\Omega,\\
\varphi &= 0 \qquad \rm{on}\ \partial\Omega,
\end{align}
\end{subequations}
where $\forall \BJ \in \BU_h \subset \BL^2(\Omega)$. Due to the fact that $\Omega$ is a bounded, uniformly star-shaped polyhedral domain, the solution $(\boldsymbol{\Psi}, \varphi)$ has 
the following regularity estimate (cf. \cite{Hiptmair11,FW2014}):
\begin{align}
\label{est_dual_1}
\kappa \|\boldsymbol{\Psi}\|_{\frac{1}{2}+\alpha,\Omega} + \|\curl \boldsymbol{\Psi}\|_{\frac{1}{2}+\alpha,\Omega} 
+ \kappa(1+\kappa) \| \boldsymbol{\Psi}\|_{0,\Omega}  +(1+ \kappa) \|\nabla \varphi\|_{0,\Omega} 
\leq C (1+\kappa) \| \BJ\|_{0,\Omega},
\end{align}
where $0<\alpha\leq \widetilde{\alpha}$ and $0<\widetilde{\alpha}<\frac{1}{2}$ is a parameter only depending on $\Omega$. When $\Omega$ is convex, the above estimate holds true for all $0<\alpha<\frac{1}{2}$.  Moreover, when $\Omega$ is also a $C^2$ star-shaped domain, (\ref{est_dual_1}) holds true for $\alpha=\frac{1}{2}$. In the following, we show that (\ref{est_dual_1}) holds true under the assumption of the domain $\Omega$. It is easy to see that $\boldsymbol{\Psi}$ satisfies 
\begin{align}
\label{dual_proof_p_1}
\curl \curl \boldsymbol{\Psi} - \kappa^2 \boldsymbol{\Psi} &= \Vi (\BJ - \nabla \varphi)  \quad  \ \rm{in}\ \Omega,\\
\label{dual_proof_p_2}
\divv \boldsymbol{\Psi} &= 0\qquad \rm{in}\ \Omega,\\
\curl \boldsymbol{\Psi} \times \Bn - \Vi \kappa \boldsymbol{\Psi}^t &= 0 \qquad \rm{on}\ \partial\Omega.
\end{align}
By (\ref{dual_proof_p_1}), we have 
\begin{align*}
(\curl \curl \boldsymbol{\Psi} - \kappa^2 \boldsymbol{\Psi}, \nabla q)_{\Omega} 
= (\Vi (\BJ - \nabla \varphi), \nabla q)_{\Omega}\qquad \forall \,q\in C_{0}^{\infty}(\Omega).
\end{align*}
By doing integration by parts and (\ref{dual_proof_p_2}), we have 
\begin{align*}
(\BJ-\nabla \varphi , \nabla q)_{\Omega} = 0\qquad \forall \,q\in C_{0}^{\infty}(\Omega).
\end{align*}
By density argument, we have 
\begin{align*}
(\BJ - \nabla \varphi , \nabla q)_{\Omega} = 0 \qquad \forall \, q \in H^1_0(\Omega).
\end{align*}
We easily obtain $\divv (\BJ-\nabla \varphi) =0$ and $\|\nabla \varphi\|_{0,\Omega} \leq \|\BJ\|_{0,\Omega}$. 
So, we can conclude that the estimate (\ref{est_dual_1}) holds true when $\Omega$ is a bounded, uniformly star-shaped polyhedron (cf. \cite{Hiptmair11}). 

Now we are ready to outline the main results in the following by showing the stability estimates of the discrete solutions from the HDG method (\ref{discrete_mixed_form}) and the associated error estimates.

\begin{theorem}\label{stability_thm}
Let $(\Bw,\Bu,\sigma)$ and $(\Bw_h,\Bu_h,\widehat{\Bu}^t_h,\sigma_h, \widehat{\sigma}_h) $ solve the equations (\ref{pde_mixed_first_order}) and (\ref{discrete_mixed_form}). We assume that (\ref{est_dual_1}) holds with  $0<\alpha \leq \widetilde{\alpha}< \frac{1}{2}$ and $(\Bw,\Bu) \in \BH^{\frac{1}{2}+\alpha}(\Omega)\times \BH^{\frac{1}{2}+\alpha}(\Omega)$.
Then the HDG method (\ref{discrete_mixed_form}) is absolutely stable. When $\kappa h / p \geq C_0$, we have
\begin{align}
\label{stability_u_h1}
\| \Bu_h \|_{\Ct_h} &\leq C\Big(  C^2_{\rm stab}\| 
\frac{\Bf}{\kappa} \|_{0,\Omega}   + C_{\rm stab}
\|\frac{\Bg}{\kappa}\|_{0,\partial \Omega}   \Big) ,\\
\label{stability_w_h1}
\| \Bw_h \|_{\Ct_h} &\leq C\Big(  (\frac{1}{\kappa}+C^2_{\rm stab}) \| \Bf \|_{0,\Omega}   +(\frac{1}{\kappa^{\frac{1}{2}}}+C_{\rm stab})  \|\Bg\|_{0,\partial \Omega}   \Big), \\
\label{stability_ut_h1}
\| \widehat{ \Bu}^t_h \|_{\partial \Ct_h}& \leq C\big( (\frac{\kappa h}{p})^{\frac{1}{2}}
+ ph^{-\frac{1}{2}} \big)\Big( C^2_{\rm stab}\| 
\frac{\Bf}{\kappa} \|_{0,\Omega}   +C_{\rm stab}
\|\frac{\Bg}{\kappa}\|_{0,\partial \Omega}   \Big),
\end{align}
where
$
C_{\rm stab} := 1 + { \frac{\kappa (1+\kappa) h^{2}  }{p^{2}  }} +{ \frac{\kappa (1+\kappa)^2 h^{2\alpha+1}  }{p^{2\alpha+1}  }}+ \frac{(1+\kappa)^2 h^{2\alpha}}{p^{2\alpha}}.
$
\end{theorem}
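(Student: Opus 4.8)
The plan is to establish stability via a duality argument applied to the HDG system, treating the discrete solution itself as the ``error'' of the method applied to zero data. First I would derive an energy identity: take $\Br_h=\Bw_h$ in (\ref{discrete_mixed_form_a}), $\Bv_h=\Bu_h$ in (\ref{discrete_mixed_form_b}), $q_h=\sigma_h$ in (\ref{discrete_mixed_form_c}), $\boldsymbol{\eta}_h=\widehat{\Bu}^t_h$ in (\ref{discrete_mixed_form_d})--(\ref{discrete_mixed_form_e}), and $\xi_h=\widehat{\sigma}_h$ in (\ref{discrete_mixed_form_f}), then add the conjugates appropriately. Using the definition of the numerical fluxes (\ref{hat_definition}) and integration by parts (the DG magic identity $(\Bu_h,\curl\Br_h)_{\Ct_h}-(\curl\Bu_h,\Br_h)_{\Ct_h}=\langle\Bu_h^t\times\Bn,\Br_h\rangle_{\partial\Ct_h}$ type formulas), the volume terms telescope and one is left, after taking imaginary/real parts, with the basic a priori bound
\begin{align*}
\|\Bw_h\|_{\Ct_h}^2 + \tau_t\|\Bu_h^t-\widehat{\Bu}^t_h\|_{\partial\Ct_h}^2 + \tau_n\|\sigma_h-\widehat{\sigma}_h\|_{\partial\Ct_h}^2 + \kappa\|\widehat{\Bu}^t_h\|_{\partial\Omega}^2 \lesssim \|\Bf\|_{0,\Omega}\|\Bu_h\|_{\Ct_h} + \|\Bg\|_{0,\partial\Omega}\|\widehat{\Bu}^t_h\|_{\partial\Omega}.
\end{align*}
This controls everything in terms of $\|\Bu_h\|_{\Ct_h}$, so the crux is an absolute (mesh-constraint-free) bound on $\|\Bu_h\|_{\Ct_h}$, which is where the dual problem (\ref{dual_problem}) enters.

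Next I would take $\BJ=\Bu_h$ in the dual problem and test the HDG equations against the $L^2$-projections $\boldsymbol{\Pi_V}\boldsymbol{\Psi}$, $\boldsymbol{\Pi_U}\curl\boldsymbol{\Psi}$ (or the appropriate projected dual fields), $\Pi_Q\varphi$, $\BP_{\BM}\boldsymbol{\Psi}^t$, $P_M\varphi$. Summing these with suitable signs and using the strong form of the dual equations, the leading term produces $\kappa^2\|\Bu_h\|_{\Ct_h}^2$ (or $(1+\kappa)\|\Bu_h\|_{\Ct_h}^2$ after incorporating the $\kappa(1+\kappa)\|\boldsymbol\Psi\|_{0,\Omega}$ piece of the regularity estimate), while the remaining terms are ``consistency'' defects measuring how far the $L^2$-projections are from being conforming test functions. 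Each such defect is an inner product of an HDG quantity (e.g. $\Bw_h$, the stabilization jumps, $\sigma_h$) against a projection error of the dual solution (e.g. $\boldsymbol{\Psi}-\boldsymbol{\Pi_V}\boldsymbol{\Psi}$, $\curl\boldsymbol{\Psi}-\boldsymbol{\Pi_U}\curl\boldsymbol{\Psi}$, $\varphi-\Pi_Q\varphi$, and their traces), bounded via (\ref{es_pj_1})--(\ref{es_pj_9}) and the dual regularity (\ref{est_dual_1}). The key point — the one that makes the Lagrange-multiplier formulation work and lets the Helmholtz decomposition be avoided — is that $\curl\boldsymbol{\Psi}\in\BH^{\frac12+\alpha}$ with the stated bound, so trace terms involving $\Bw_h$ against $\curl\boldsymbol{\Psi}$-projection errors are genuinely controlled. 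Collecting all defect bounds, absorbing the energy-bounded quantities ($\|\Bw_h\|_{\Ct_h}$, the jump norms, $\kappa^{1/2}\|\widehat{\Bu}^t_h\|_{\partial\Omega}$) using the energy identity above, and dividing through by $\|\Bu_h\|_{\Ct_h}$ yields $\|\Bu_h\|_{\Ct_h}\lesssim C_{\rm stab}^2\|\Bf/\kappa\|_{0,\Omega}+C_{\rm stab}\|\Bg/\kappa\|_{0,\partial\Omega}$, where $C_{\rm stab}$ is exactly the accumulation of the $h,p,\kappa$ powers coming from the projection-error $\times$ regularity products for each term ($\kappa(1+\kappa)h^2/p^2$ from the $\kappa^2\boldsymbol\Psi$-mass term, $(1+\kappa)^2h^{2\alpha}/p^{2\alpha}$ and $\kappa(1+\kappa)^2h^{2\alpha+1}/p^{2\alpha+1}$ from the curl-curl/trace terms, plus the leading $1$). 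This is (\ref{stability_u_h1}).

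Then (\ref{stability_w_h1}) follows by inserting the $\|\Bu_h\|_{\Ct_h}$ bound back into the energy identity: $\|\Bw_h\|_{\Ct_h}^2\lesssim\|\Bf\|_{0,\Omega}\|\Bu_h\|_{\Ct_h}+\|\Bg\|_{0,\partial\Omega}\|\widehat{\Bu}^t_h\|_{\partial\Omega}$, where $\|\widehat{\Bu}^t_h\|_{\partial\Omega}\lesssim\kappa^{-1/2}(\|\Bf\|_{0,\Omega}^{1/2}\|\Bu_h\|_{\Ct_h}^{1/2}+\|\Bg\|_{0,\partial\Omega}^{1/2}\|\widehat{\Bu}^t_h\|_{\partial\Omega}^{1/2})$ so that after a Young's inequality $\|\widehat{\Bu}^t_h\|_{\partial\Omega}\lesssim\kappa^{-1/2}\|\Bg\|_{0,\partial\Omega}+\kappa^{-1}\|\Bf\|_{0,\Omega}^{1/2}\|\Bu_h\|^{1/2}_{\Ct_h}$; plugging in and using $\kappa h/p\geq C_0$ to simplify collapses the bound to the stated form. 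Finally (\ref{stability_ut_h1}) comes from writing $\widehat{\Bu}^t_h=\Bu^t_h-(\Bu^t_h-\widehat{\Bu}^t_h)$, estimating $\|\Bu^t_h\|_{\partial\Ct_h}$ by a discrete trace inequality $\lesssim (p/h^{1/2})\|\Bu_h\|_{\Ct_h}$ and $\|\Bu^t_h-\widehat{\Bu}^t_h\|_{\partial\Ct_h}=\tau_t^{-1/2}(\tau_t^{1/2}\|\cdot\|)\lesssim(h/p)^{1/2}\|\Bw_h\|_{\Ct_h}$-type bounds from the energy identity, then combining with (\ref{stability_u_h1})--(\ref{stability_w_h1}) and the condition $\kappa h/p\geq C_0$ to match the prefactor $(\kappa h/p)^{1/2}+ph^{-1/2}$. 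The main obstacle I anticipate is the bookkeeping in the duality step: one must be scrupulous about which dual field gets projected where, track every $\kappa$-, $h$-, $p$-power through the triangle inequalities, and verify that all ``non-conforming defect'' terms are indeed absorbable into $\|\Bu_h\|_{\Ct_h}$ plus energy-controlled quantities rather than leaving an uncontrolled residual — this is delicate precisely because the operator is indefinite, so no coercivity is available and the whole argument hinges on the dual regularity estimate (\ref{est_dual_1}) being strong enough (in particular the $\curl\boldsymbol{\Psi}$ regularity, which the Lagrange multiplier supplies) to close the loop.
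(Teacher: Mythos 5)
Your overall strategy coincides with the paper's: an energy identity obtained by testing the scheme with the discrete solution itself, followed by a duality argument in which the first-order dual system (with $\BJ=\Bu_h$) is paired against $L^2$-projections of the dual fields, and finally recovery of the $\Bw_h$ and $\widehat{\Bu}^t_h$ bounds from the energy identity and a discrete trace inequality. However, there are two concrete defects. First, your energy inequality is mis-stated in a way that breaks the derivation of \eqref{stability_w_h1}. Taking real and imaginary parts of the tested system gives two \emph{separate} pieces of information: the real part controls $\|\tau_t^{1/2}(\Bu^t_h-\widehat{\Bu}^t_h)\|^2_{\partial\Ct_h}+\|\tau_n^{1/2}(\sigma_h-\widehat{\sigma}_h)\|^2_{\partial\Ct_h}+\kappa\|\widehat{\Bu}^t_h\|^2_{0,\partial\Omega}$ by the data, while the imaginary part gives $\|\Bw_h\|^2_{\Ct_h}=\kappa^2\|\Bu_h\|^2_{\Ct_h}-\mathrm{Im}\big[(\Bf,\Bu_h)_{\Ct_h}+\langle\Bg,\widehat{\Bu}^t_h\rangle_{\partial\Omega}\big]$, because $(\Vi\Bw_h,\Bw_h)_{\Ct_h}$ and $(\Vi\kappa^2\Bu_h,\Bu_h)_{\Ct_h}$ enter with opposite signs. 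You cannot place $\|\Bw_h\|^2_{\Ct_h}$ on the left alongside the jump terms without carrying $\kappa^2\|\Bu_h\|^2_{\Ct_h}$ on the right, as you do in your displayed bound. This is not cosmetic: the term $C^2_{\rm stab}\|\Bf\|_{0,\Omega}=\kappa\cdot C^2_{\rm stab}\|\Bf/\kappa\|_{0,\Omega}$ in \eqref{stability_w_h1} is precisely the image of $\kappa\|\Bu_h\|_{\Ct_h}$ under \eqref{stability_u_h1}; your route via $\|\Bw_h\|^2\lesssim\|\Bf\|_{0,\Omega}\|\Bu_h\|_{\Ct_h}+\|\Bg\|_{0,\partial\Omega}\|\widehat{\Bu}^t_h\|_{0,\partial\Omega}$ would produce a different estimate that this argument does not in fact yield. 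The correct step is $\|\Bw_h\|^2_{\Ct_h}\leq \kappa^2\|\Bu_h\|^2_{\Ct_h}+2\|\Bf\|_{0,\Omega}\|\Bu_h\|_{\Ct_h}+\kappa^{-1}\|\Bg\|^2_{0,\partial\Omega}$, into which one inserts \eqref{stability_u_h1}.

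Second, the assertion that the method is \emph{absolutely stable} requires uniqueness of the full discrete solution for zero data without any mesh constraint, and the estimates \eqref{stability_u_h1}--\eqref{stability_ut_h1} together with the energy identity only force $\Bw_h=\Bu_h=0$, $\widehat{\Bu}^t_h=0$ and $\sigma_h=\widehat{\sigma}_h$ on $\partial\Ct_h$; one must still show $\sigma_h=\widehat{\sigma}_h=0$. The paper does this by returning to \eqref{discrete_mixed_form_b} with arbitrary $\Bv_h\in\BU_h$, integrating by parts to conclude $(\nabla\sigma_h,\Bv_h)_{\Ct_h}=0$, hence $\sigma_h$ is piecewise constant, and then propagating $\widehat{\sigma}_h=0$ from $\partial\Omega$ through the single-valuedness of $\widehat{\sigma}_h$ on interior faces. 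Your proposal omits this step entirely, so the well-posedness claim is unproved. A more minor point: in the duality step the projections should be assigned as $\Br_h=\boldsymbol{\Pi_V\Phi}$ in \eqref{discrete_mixed_form_a} (with $\boldsymbol{\Phi}=-\Vi\curl\boldsymbol{\Psi}$), $\Bv_h=\boldsymbol{\Pi_U\Psi}$ in \eqref{discrete_mixed_form_b}, $q_h=\Pi_Q\varphi$ in \eqref{discrete_mixed_form_c} and $\boldsymbol{\eta}_h=\BP_{\BM}\boldsymbol{\Psi}^t$ in \eqref{discrete_mixed_form_e}; your list pairs the fields with the wrong equations, though your hedge indicates the intended mechanism is the right one.
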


\begin{theorem}\label{error_thm}
Let $(\Bw,\Bu,\sigma)$ and $(\Bw_h,\Bu_h,\widehat{\Bu}^t_h,\sigma_h, \widehat{\sigma}_h) $ solve the equations (\ref{pde_mixed_first_order}) and (\ref{discrete_mixed_form}). We assume that (\ref{est_dual_1}) holds with  $0<\alpha \leq \widetilde{\alpha}< \frac{1}{2}$ and $(\Bw,\Bu) \in \BH^{\frac{1}{2}+\alpha}(\Omega)\times \BH^{\frac{1}{2}+\alpha}(\Omega)$.
When $\kappa h / p \geq C_0$, we have
\begin{align}
\label{error_ut_h1}
\|\Bu-\Bu_h\|_{\Ct_h} &\leq C \big( R_{\Bw}  \| \Bw \|_{\frac{1}{2}+\alpha,\Omega}+  R_{\Bu} \| \Bu \|_{\frac{1}{2}+\alpha,\Omega}\big),\\
\label{error_wt_h1}
\|\Bw-\Bw_h\|_{\Ct_h}& \leq C\big( (\frac{h}{p})^{\frac{1}{2}+\alpha}  + \kappa R_{\Bw}   \big)  \| \Bw \|_{\frac{1}{2}+\alpha,\Omega} \nn \\& \quad + 
C\big(  \kappa R_{\Bu} +  \kappa( 1+ (1+\kappa)^{-\frac{1}{2}})(\frac{ h}{p})^{\frac{1}{2}+\alpha} \big) \| \Bu \|_{\frac{1}{2}+\alpha,\Omega}  ,
\end{align}
where
$
 R_{\Bw} :=   \frac{(1+\kappa) h^{2\alpha+1}}{p^{2\alpha+1}} +  \frac{(1+\kappa)^{\frac{1}{2}} h^{\alpha+\frac{3}{2}}}{p^{\alpha+\frac{3}{2}}} 
$
and
$  R_{\Bu} := (1+(1+\kappa)^{\frac{1}{2}})  (\frac{ h}{p})^{\frac{1}{2}+\alpha} + (1+\kappa + (1+\kappa)^{\frac{1}{2}})  (\frac{ h}{p})^{2\alpha} .
$
\end{theorem}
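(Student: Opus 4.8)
The plan is to run a projection-based duality analysis that mirrors the stability proof of Theorem~\ref{stability_thm}. First I would split every variable into its $L^2$-projection error and a discrete error: set $\Be_{\Bw}:=\boldsymbol{\Pi_V}\Bw-\Bw_h$, $\Be_{\Bu}:=\boldsymbol{\Pi_U}\Bu-\Bu_h$, $\Be_{\sigma}:=\Pi_Q\sigma-\sigma_h$, $\widehat{\Be}_{\Bu}:=\boldsymbol{P_M}\Bu^t-\widehat{\Bu}^t_h$ and $\widehat{\Be}_{\sigma}:=P_M\sigma-\widehat{\sigma}_h$; since $\sigma\equiv 0$ the last two reduce to $-\sigma_h$ and $-\widehat{\sigma}_h$ and $\Pi_Q\sigma=0$. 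Inserting the exact solution of (\ref{pde_mixed_first_order}) into the HDG forms (\ref{discrete_mixed_form}) (the exact numerical traces are single-valued and satisfy the boundary conditions), subtracting, and using the orthogonality of the $L^2$-projections — which kills, for instance, the terms testing $\Bw-\boldsymbol{\Pi_V}\Bw$ and $\Bu-\boldsymbol{\Pi_U}\Bu$ against $\BV_h$ and against element-wise curls of $\BV_h$, and $\Bu^t-\boldsymbol{P_M}\Bu^t$ against tangential traces of $\BV_h$ — one obtains that $(\Be_{\Bw},\Be_{\Bu},\widehat{\Be}_{\Bu},\Be_{\sigma},\widehat{\Be}_{\sigma})$ satisfies the very same system (\ref{discrete_mixed_form}) with the data replaced by residual functionals that involve only the projection errors $\Bw-\boldsymbol{\Pi_V}\Bw$, $\Bu-\boldsymbol{\Pi_U}\Bu$, $\Bu^t-\boldsymbol{P_M}\Bu^t$ and their traces, each weighted by the stabilization parameters $\tau_t=p/h$, $\tau_n=(1+\kappa)h/p$ or by $\kappa^2$ according to where it enters (\ref{discrete_mixed_form_b})--(\ref{discrete_mixed_form_f}). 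Writing out the six error equations and recording precisely which residuals survive the projections is the first, routine but lengthy, step.

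The core of the argument is to bound $\|\Be_{\Bu}\|_{\Ct_h}$ by duality, repeating the computation that yielded (\ref{stability_u_h1}). Choose $\BJ=\Be_{\Bu}\in\BU_h$ in the dual problem (\ref{dual_problem}), let $(\boldsymbol{\Psi},\varphi)$ be its solution, put $\boldsymbol{\Phi}:=\curl\boldsymbol{\Psi}$, so that $(\boldsymbol{\Phi},\boldsymbol{\Psi},\varphi)$ solves the first-order form of the dual with the regularity estimate (\ref{est_dual_1}). Then $\|\Be_{\Bu}\|_{\Ct_h}^2=(\Be_{\Bu},\Be_{\Bu})_{\Ct_h}=(\Be_{\Bu},\BJ)_{\Ct_h}$; I would substitute the dual PDE for $\BJ$, integrate by parts element by element, insert the flux definitions (\ref{hat_definition}) for the dual numerical traces, and use the six error equations to remove every volume and face contribution carrying $\Be_{\Bw},\Be_{\Bu},\widehat{\Be}_{\Bu},\Be_{\sigma},\widehat{\Be}_{\sigma}$. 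What remains is a finite sum of products of the residual functionals of the exact solution against the projection errors $\boldsymbol{\Phi}-\boldsymbol{\Pi_V}\boldsymbol{\Phi}$, $\boldsymbol{\Psi}-\boldsymbol{\Pi_U}\boldsymbol{\Psi}$, $\varphi-\Pi_Q\varphi$ and their traces (the leading dual terms again drop by projection orthogonality). Bounding both factors by (\ref{es_pj_1})--(\ref{es_pj_9}), then invoking (\ref{est_dual_1}) with $\|\BJ\|_{0,\Omega}=\|\Be_{\Bu}\|_{\Ct_h}$ to turn every norm of $\boldsymbol{\Psi},\boldsymbol{\Phi},\varphi$ into $(1+\kappa)\|\Be_{\Bu}\|_{\Ct_h}$, and dividing out one power of $\|\Be_{\Bu}\|_{\Ct_h}$, produces $\|\Be_{\Bu}\|_{\Ct_h}\le C\big(R_{\Bw}\|\Bw\|_{\frac{1}{2}+\alpha,\Omega}+R_{\Bu}\|\Bu\|_{\frac{1}{2}+\alpha,\Omega}\big)$ after the constant is collapsed in the regime $\kappa h/p\ge C_0$. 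The triangle inequality $\|\Bu-\Bu_h\|_{\Ct_h}\le\|\Bu-\boldsymbol{\Pi_U}\Bu\|_{\Ct_h}+\|\Be_{\Bu}\|_{\Ct_h}$ together with (\ref{es_pj_2}) then give (\ref{error_ut_h1}), the $(h/p)^{\frac{1}{2}+\alpha}$ projection contribution being already present in $R_{\Bu}$.

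For (\ref{error_wt_h1}) I would first control the boundary error $\|\widehat{\Be}_{\Bu}\|_{\partial\Ct_h}$ by testing the error equations with the errors themselves and separating real and imaginary parts, exactly as in the derivation of (\ref{stability_ut_h1}); this also provides $\kappa$-weighted control of $\Be_{\Bu}$ and of $\Be_{\sigma},\widehat{\Be}_{\sigma}$. Then $\|\Be_{\Bw}\|_{\Ct_h}$ is extracted from the first error equation $(\Vi\Be_{\Bw},\Br_h)_{\Ct_h}=(\Be_{\Bu},\curl\Br_h)_{\Ct_h}-\langle\widehat{\Be}_{\Bu}\times\Bn,\Br_h\rangle_{\partial\Ct_h}$ together with the second error equation, so that the $\curl\Br_h$ term is paired with $\Bw$ rather than differentiated, the face term being absorbed using a discrete trace inequality and the weight $\tau_t=p/h$; the extra factor $\kappa$ multiplying $R_{\Bw}$ and $R_{\Bu}$ in (\ref{error_wt_h1}), and the additional $\kappa(1+(1+\kappa)^{-\frac{1}{2}})(\frac{h}{p})^{\frac{1}{2}+\alpha}\|\Bu\|_{\frac{1}{2}+\alpha,\Omega}$ term, arise from the coupling $\curl\Bw+\Vi\kappa^2\Bu+\nabla\sigma=\Bf$ between $\Bw$ and $\kappa^2\Bu$ in (\ref{discrete_mixed_form_b}) and from the $\kappa$-scaled residuals there. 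A last triangle inequality with (\ref{es_pj_1}) yields (\ref{error_wt_h1}).

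The main obstacle is the bookkeeping in the second step: assembling the many residual functionals in the error equations, verifying which pieces are annihilated by the $L^2$-projections so that only genuinely higher-order projection errors remain, and pairing every surviving residual with the correct projection error of the dual variable and the correct powers of $\tau_t$, $\tau_n$ and $\kappa$ — all kept in a shape to which (\ref{es_pj_1})--(\ref{es_pj_9}) and (\ref{est_dual_1}) apply — so that the accumulated $h$-, $p$- and $\kappa$-powers telescope exactly into $R_{\Bw}$ and $R_{\Bu}$. A related subtlety is that the stability estimate of Theorem~\ref{stability_thm} is stated for the specific data $(\Bf,\Bg)$ and not for the general right-hand side of the error equations, so its duality computation must be re-run here with the residual functionals in the role of data rather than cited as a black box; and $C_0$ in $\kappa h/p\ge C_0$ must be fixed large enough, independently of $\kappa,h,p$, to absorb the lower-order terms when the stability constant is simplified.
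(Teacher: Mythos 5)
Your overall strategy --- splitting off $L^2$-projection errors, deriving error equations from projection orthogonality, running the duality argument with $\BJ=\Be_{\Bu}$, and finishing with triangle inequalities --- is exactly the paper's (its Lemmas on the error equations, the energy identity, and the duality identity $\|\Be_{\Bu}\|^2_{\Ct_h}=\sum_k E_k$). One point in your second step is, however, wrong as stated and would prevent the argument from closing in the order you give it. You claim that after invoking the six error equations in the duality computation, ``what remains is a finite sum of products of the residual functionals of the exact solution against the projection errors'' of the dual variables. That is not the case: the duality identity unavoidably retains face terms carrying the \emph{discrete} error jumps, namely terms of the form $\langle(\Be^t_{\Bu}-\Be_{\widehat{\Bu}^t})\times\Bn,\,\boldsymbol{\Phi}-\boldsymbol{\Pi_V\Phi}\rangle_{\partial\Ct_h}$, $\langle\tau_t(\Be^t_{\Bu}-\Be_{\widehat{\Bu}^t}),\,\boldsymbol{\Psi}-\boldsymbol{\Pi_U\Psi}\rangle_{\partial\Ct_h}$ and $\langle e_{\sigma}-e_{\widehat{\sigma}},\,(\boldsymbol{\Psi}-\boldsymbol{\Pi_U\Psi})\cdot\Bn\rangle_{\partial\Ct_h}$ (the paper's $E_1$, $E_3$, $E_5$). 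These cannot be eliminated by the error equations; they must be bounded \emph{before} the duality step. The tool for that is precisely the energy computation you describe --- testing the error equations with the errors themselves and taking the real part --- which gives $\|\tau_t^{1/2}(\Be^t_{\Bu}-\Be_{\widehat{\Bu}^t})\|_{\partial\Ct_h}+\|\tau_n^{1/2}(e_{\sigma}-e_{\widehat{\sigma}})\|_{\partial\Ct_h}+\|\kappa^{1/2}\Be_{\widehat{\Bu}^t}\|_{0,\partial\Omega}\le C\,\eta(\Bw,\Bu)$ with $\eta$ built only from projection residuals of $(\Bw,\Bu)$ and hence independent of $\|\Be_{\Bu}\|_{\Ct_h}$ (the $\|\Be_{\Bw}\|^2$ and $\kappa^2\|\Be_{\Bu}\|^2$ contributions are purely imaginary and drop out of the real part, so there is no circularity). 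You deploy this energy estimate only in your third step and only for the boundary and $\Bw$ errors; it has to come first, as input to the duality bound for $\|\Be_{\Bu}\|_{\Ct_h}$.

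With that reordering your proof coincides with the paper's. The remaining details are consistent with what you sketch: the $\Bw$-estimate comes from the \emph{imaginary} part of the same summed energy identity, which gives $\|\Be_{\Bw}\|_{\Ct_h}\le\kappa\|\Be_{\Bu}\|_{\Ct_h}+C\,\eta(\Bw,\Bu)$ and explains the factor $\kappa$ multiplying $R_{\Bw}$ and $R_{\Bu}$ in (\ref{error_wt_h1}); be aware that testing the first error equation alone with $\Br_h=\Be_{\Bw}$, as your wording half-suggests, would force an inverse estimate on $\curl\Be_{\Bw}$ and lose a factor $p/h$, so the cancellation of the $(\Be_{\Bw},\curl\Be_{\Bu})_{\Ct_h}$ terms between the first and second error equations is essential, not optional.
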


\begin{remark}
For the solutions of the first-order system (\ref{pde_mixed_first_order}) which admit the regularity as in (\ref{sta_weak_new}), when $\kappa h/p\leq C_0$, one may tune the parameters $\tau_t$ and $\tau_n$ (cf. Remark \ref{remark_sta_1}) and also get the stability estimates and error estimates for the discrete solutions of the HDG method (\ref{discrete_mixed_form}). When we consider only $\kappa$- and $h$-dependence, the above results hold when  $\Ct_h$ consists of general polyhedral elements. 
\end{remark}

\section{Stability estimate}\label{stability_estimate}
In this section we shall show that the HDG method (\ref{discrete_mixed_form}) is absolutely stable. We first present a lemma which shall be used to estimate the stability estimate of $\Bu_h$.
\begin{lemma}
\label{stability_lemma}
Let $(\Bw_h,\Bu_h,\widehat{\Bu}^t_h,\sigma_h, \widehat{\sigma}_h) $ be the solution of the problem (\ref{discrete_mixed_form}). It holds that
\begin{align}
\label{estimate_mixed_1}
&\|\tau_t^{\frac{1}{2}}( \Bu^t_h - \widehat{\Bu}^t_h ) \|^2_{\partial \Ct_h} + \|\tau_n^{\frac{1}{2}}( \sigma_h - \widehat{\sigma}_h ) \|^2_{\partial \Ct_h}  + \frac{\kappa}{2} \|\widehat{\Bu}^t_h\|^2_{0,\partial \Omega} \leq \|\Bf\|_{0,\Omega} \|\Bu_h\|_{\Ct_h} + \frac{1}{2\kappa} \|\Bg\|^2_{0,\partial \Omega},\\
\label{estimate_mixed_2}
&\|\Bw_h\|^2_{\Ct_h}  \leq \kappa^2 \|\Bu_h\|^2_{\Ct_h} + 2\|\Bf\|_{0,\Omega} \|\Bu_h\|_{\Ct_h}  + \frac{1}{\kappa} \|\Bg\|^2_{0,\partial \Omega}.
\end{align}
\end{lemma}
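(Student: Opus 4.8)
The natural approach is the standard HDG energy identity: test the discrete equations against the discrete solution itself. The plan is to take $\Br_h = \Bw_h$ in \eqref{discrete_mixed_form_a}, $\Bv_h = \Bu_h$ in \eqref{discrete_mixed_form_b}, $q_h = \sigma_h$ in \eqref{discrete_mixed_form_c}, $\boldsymbol{\eta}_h = \widehat{\Bu}^t_h$ in \eqref{discrete_mixed_form_d}--\eqref{discrete_mixed_form_e}, and $\xi_h = \widehat{\sigma}_h$ in \eqref{discrete_mixed_form_f}. Then I would take the complex conjugate of the first equation (or conjugate-transpose appropriately) and add the resulting identities so that the volume terms $(\Bu_h, \curl\Bw_h)$, $(\Bw_h,\curl\Bv_h)$ cancel against each other (after integration by parts is implicitly encoded in the HDG bilinear forms), the terms involving $\sigma_h$ and $\divv$ cancel, and the $\Vi\kappa^2\|\Bu_h\|^2$ term is purely imaginary and drops when we take the real part. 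The key bookkeeping is that, after substituting the definitions \eqref{hat_definition} of $\widehat{\Bw}_h$ and $\widehat{\Bu}^n_h$ into the numerical-trace terms $\langle \widehat{\Bw}^t_h\times\Bn, \cdot\rangle_{\partial\Ct_h}$ and $\langle \widehat{\Bu}^n_h\cdot\Bn, \cdot\rangle_{\partial\Ct_h}$, the stabilization contributes exactly $\|\tau_t^{1/2}(\Bu^t_h-\widehat{\Bu}^t_h)\|^2_{\partial\Ct_h}$ and $\|\tau_n^{1/2}(\sigma_h-\widehat{\sigma}_h)\|^2_{\partial\Ct_h}$, while the boundary condition \eqref{discrete_mixed_form_e} produces the impedance term $\kappa\|\widehat{\Bu}^t_h\|^2_{0,\partial\Omega}$ and the data term $\langle\Bg,\widehat{\Bu}^t_h\rangle_{\partial\Omega}$.

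Concretely, after taking real parts the left-hand side should read
\begin{align*}
\|\tau_t^{\frac{1}{2}}(\Bu^t_h-\widehat{\Bu}^t_h)\|^2_{\partial\Ct_h} + \|\tau_n^{\frac{1}{2}}(\sigma_h-\widehat{\sigma}_h)\|^2_{\partial\Ct_h} + \kappa\|\widehat{\Bu}^t_h\|^2_{0,\partial\Omega} = \mathrm{Re}\,(\Bf,\Bu_h)_{\Ct_h} + \mathrm{Re}\,\langle\Bg,\widehat{\Bu}^t_h\rangle_{\partial\Omega},
\end{align*}
and then \eqref{estimate_mixed_1} follows by bounding $\mathrm{Re}\,(\Bf,\Bu_h)_{\Ct_h}\le\|\Bf\|_{0,\Omega}\|\Bu_h\|_{\Ct_h}$ by Cauchy--Schwarz and absorbing $\mathrm{Re}\,\langle\Bg,\widehat{\Bu}^t_h\rangle_{\partial\Omega}\le \frac{1}{2\kappa}\|\Bg\|^2_{0,\partial\Omega} + \frac{\kappa}{2}\|\widehat{\Bu}^t_h\|^2_{0,\partial\Omega}$ via Young's inequality, moving the $\frac{\kappa}{2}\|\widehat{\Bu}^t_h\|^2_{0,\partial\Omega}$ to the left. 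For \eqref{estimate_mixed_2}, I would instead test \eqref{discrete_mixed_form_a} with $\Br_h=\Bw_h$ to get $\Vi\|\Bw_h\|^2_{\Ct_h} = (\Bu_h,\curl\Bw_h)_{\Ct_h} - \langle\widehat{\Bu}^t_h\times\Bn,\Bw_h\rangle_{\partial\Ct_h}$, then use \eqref{discrete_mixed_form_b}--\eqref{discrete_mixed_form_f} with suitable test functions to re-express $(\Bw_h,\curl\Bu_h)_{\Ct_h}$ and the associated trace terms, so that $\|\Bw_h\|^2_{\Ct_h}$ gets bounded by $\kappa^2\|\Bu_h\|^2_{\Ct_h}$ plus the stabilization terms plus data; finally feeding \eqref{estimate_mixed_1} back in to absorb the stabilization terms yields the stated bound with the factor $2$ on $\|\Bf\|_{0,\Omega}\|\Bu_h\|_{\Ct_h}$ and $\frac{1}{\kappa}\|\Bg\|^2_{0,\partial\Omega}$.

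The main obstacle is purely the careful handling of the numerical traces: one must verify that after inserting \eqref{hat_definition} and using the conservativity identities \eqref{discrete_mixed_form_d} and \eqref{discrete_mixed_form_f} (which say the jumps of $\widehat{\Bw}^t_h\times\Bn$ and $\widehat{\Bu}^n_h\cdot\Bn$ across interior faces vanish in a weak sense), all the cross terms between element-interior and single-valued trace unknowns telescope correctly and no spurious boundary contributions survive except the impedance term on $\partial\Omega$. Getting the signs of $\Vi$ right when conjugating, and confirming that the $\nabla\sigma_h$/$\divv\Bu_h$ pairing cancels against the $\widehat{\Bu}^n_h$ and $\widehat{\sigma}_h$ trace terms using \eqref{discrete_mixed_form_c} and \eqref{discrete_mixed_form_f}, is the delicate part; the rest is Cauchy--Schwarz and Young's inequality.
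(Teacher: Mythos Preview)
Your approach is correct and essentially matches the paper's proof: the paper also tests the discrete system against $(\Bw_h,\Bu_h,\widehat{\Bu}^t_h,\sigma_h,\widehat{\sigma}_h)$, conjugates the appropriate equations, and combines them into a single identity
\[
-\Vi\|\Bw_h\|^2_{\Ct_h} + \Vi\kappa^2\|\Bu_h\|^2_{\Ct_h} + \|\tau_t^{1/2}(\Bu^t_h-\widehat{\Bu}^t_h)\|^2_{\partial\Ct_h} + \|\tau_n^{1/2}(\sigma_h-\widehat{\sigma}_h)\|^2_{\partial\Ct_h} + \kappa\|\widehat{\Bu}^t_h\|^2_{0,\partial\Omega} = (\Bf,\Bu_h)_{\Ct_h} + \langle\Bg,\widehat{\Bu}^t_h\rangle_{\partial\Omega},
\]
from which \eqref{estimate_mixed_1} is the real part exactly as you describe. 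One small simplification relative to your plan for \eqref{estimate_mixed_2}: you do not need a separate testing argument---it follows directly from the \emph{imaginary} part of the same identity (the stabilization terms are real and drop out, leaving $\|\Bw_h\|^2_{\Ct_h} = \kappa^2\|\Bu_h\|^2_{\Ct_h} - \mathrm{Im}(\Bf,\Bu_h)_{\Ct_h} - \mathrm{Im}\langle\Bg,\widehat{\Bu}^t_h\rangle_{\partial\Omega}$), and then Young on the $\Bg$-term together with \eqref{estimate_mixed_1} to control $\frac{\kappa}{2}\|\widehat{\Bu}^t_h\|^2_{0,\partial\Omega}$ gives the stated constants.
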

\begin{proof}
We first choose $\Br_h = \Bw_h, \Bv_h = \Bu_h, \boldsymbol{\eta}_h = \widehat{\Bu}^t_h, q_h=\sigma_h,\xi_h = \widehat{\sigma}_h$ in (\ref{discrete_mixed_form_a})-(\ref{discrete_mixed_form_f}) to get the following equalities:\begin{subequations}
\begin{align}
&(\Vi \Bw_h, \Bw_h )_{\Ct_h} - (\Bu_h,\curl \Bw_h)_{\Ct_h}  + \langle \widehat{\Bu}^t_h\times \Bn,\Bw_h\rangle_{\partial \Ct_h} = 0, \label{pre_sta_pr_1}\\
& (\curl \Bw_h, \Bu_h)_{\Ct_h} + \langle \Bw^t_h \times \Bn, \Bu_h\rangle_{\partial \Ct_h} - \langle  \widehat{\Bw}^t_h \times \Bn, \Bu_h\rangle_{\partial \Ct_h} \nn\\& \qquad \qquad \qquad  \quad + (\Vi \kappa^2 \Bu_h,\Bu_h)_{\Ct_h} - (\sigma_h, \divv \Bu_h)_{\Ct_h} + \langle \widehat{\sigma}_h,\Bu^n_h \cdot \Bn \rangle_{\partial \Ct_h} = (\Bf, \Bu_h)_{\Ct_h},\label{pre_sta_pr_2}\\
& (\divv \Bu_h , \sigma_h)_{\Ct_h} - \langle \Bu^n_h \cdot \Bn ,\sigma_h\rangle_{\partial \Ct_h}  + \langle \widehat{\Bu}^n_h \cdot \Bn,\sigma_h \rangle_{\partial \Ct_h} =0,\label{pre_sta_pr_3}\\
&\langle \widehat{\Bw}^t_h \times \Bn, \widehat{\Bu}^t_h \rangle_{\partial \Ct_h \setminus \partial \Omega}=0,\label{pre_sta_pr_4}\\
& \langle \widehat{\Bw}^t_h \times \Bn, \widehat{\Bu}^t_h \rangle_{\partial \Omega} + \langle \kappa \widehat{\Bu}^t_h, \widehat{\Bu}^t_h \rangle_{\partial \Omega} = \langle \Bg,  \widehat{\Bu}^t_h \rangle_{\partial \Omega},\label{pre_sta_pr_5}\\
& \langle \widehat{\Bu}^n_h \cdot \Bn, \widehat{\sigma}_h \rangle_{\partial \Ct_h} = 0,\label{pre_sta_pr_6}
\end{align}
\end{subequations}
where (\ref{pre_sta_pr_2}) and (\ref{pre_sta_pr_3}) are obtained by integration by parts. Furthermore, noting the definitions of $\widehat{\Bw}_h$ in (\ref{hat_definition}) and applying complex conjugation to (\ref{pre_sta_pr_1}), (\ref{pre_sta_pr_3}) and (\ref{pre_sta_pr_6}), we get the following equalities after simple manipulations:
\begin{align*}
&-(\Vi \Bw_h, \Bw_h )_{\Ct_h} - (\curl \Bw_h, \Bu_h)_{\Ct_h} - \langle \tau_t (\Bu^t_h-\widehat{\Bu}^t_h)^t, \widehat{\Bu}^t_h\rangle_{\partial \Ct_h} + \langle \kappa \widehat{\Bu}^t_h,\widehat{\Bu}^t_h\rangle_{\partial \Omega} = \langle \Bg, \widehat{\Bu}^t_h\rangle_{\partial \Omega},\\
& (\sigma_h, \divv \Bu_h )_{\Ct_h} - \langle \sigma_h,\Bu^n_h \cdot \Bn \rangle_{\partial \Ct_h}  + \langle \sigma_h,\widehat{\Bu}^n_h \cdot \Bn \rangle_{\partial \Ct_h} =0,\\
& -  \langle \widehat{\sigma}_h, \widehat{\Bu}^n_h \cdot \Bn \rangle_{\partial \Ct_h} = 0.
\end{align*}
Adding the above three equalities and (\ref{pre_sta_pr_2}) together and noting that $ (\Bu^t_h-\widehat{\Bu}^t_h)^t =\Bu^t_h-\widehat{\Bu}^t_h $, we have
\begin{align*}
&-(\Vi \Bw_h, \Bw_h )_{\Ct_h} + \langle \tau_t (\Bu^t_h-\widehat{\Bu}^t_h), \Bu^t_h-\widehat{\Bu}^t_h\rangle_{\partial \Ct_h} + \langle \kappa \widehat{\Bu}^t_h,\widehat{\Bu}^t_h\rangle_{\partial \Omega}\\
&\qquad\qquad\qquad \ +  \langle \tau_n (\sigma_h-\widehat{\sigma}_h), \sigma_h-\widehat{\sigma}_h\rangle_{\partial \Ct_h}  + (\Vi \kappa^2 \Bu_h,\Bu_h)_{\Ct_h}   = (\Bf, \Bu_h)_{\Ct_h}+ \langle \Bg, \widehat{\Bu}^t_h\rangle_{\partial \Omega},
\end{align*}
which implies the lemma by the Cauchy-Schwarz inequality.
\end{proof}

Next we shall utilize the dual argument to give the $L^2$-norm estimate of $\Bu_h$. Given $\Bu_h \in \BL^2(\Omega)$, we introduce the first-order system of the dual problem (\ref{dual_problem}) with $\BJ = \Bu_h$:
\begin{subequations}
\label{dual_FOS}
\begin{align}
\Vi \boldsymbol{\Phi} - \curl \boldsymbol{\Psi} &= 0\qquad \rm{in}\ \Omega, \\
\curl \boldsymbol{\Phi} + \Vi\kappa^2 \boldsymbol{\Psi} + \nabla \varphi &=  \Bu_h \quad \ \rm{in}\ \Omega,\\
\label{dual_FOS_1}
\divv \boldsymbol{\Psi} &= 0\qquad \rm{in}\ \Omega,\\
\label{bc1_dual_FOS}
\boldsymbol{\Phi} \times \Bn -  \kappa \boldsymbol{\Psi}^t &= 0 \qquad \rm{on}\ \partial\Omega,\\
\varphi &= 0 \qquad \rm{on}\ \partial\Omega.
\end{align}
\end{subequations}
Due to $\varphi \in H^1_0(\Omega)$, we easily obtain
\begin{align}
\label{est_dual_3_p}
\|\varphi\|_{1,\Omega} \leq C \|\Bu_h\|_{\Ct_h}.
\end{align}
When the estimate (\ref{est_dual_1}) holds, taking $\BJ = \Bu_h$ in (\ref{est_dual_1}) we have
\begin{align}
\label{est_dual_3_new}
\kappa \| \boldsymbol{\Psi} \|_{\frac{1}{2}+\alpha,\Omega} + \|\curl \boldsymbol{\Psi}\|_{\frac{1}{2}+\alpha,\Omega} + \kappa(1+\kappa) \| \boldsymbol{\Psi}\|_{0,\Omega} \leq C(1+\kappa) \|\Bu_h\|_{\Ct_h},
\end{align}
which implies
\begin{align}
\label{est_dual_3_1}
\| \boldsymbol{\Phi}\|_{\frac{1}{2}+\alpha,\Omega}  \leq C (1+\kappa) \|\Bu_h\|_{\Ct_h}.
\end{align}
By the equation (\ref{dual_p_1}) with $\BJ = \Bu_h$, we directly have
\[
(\curl \curl \boldsymbol{\Psi},\boldsymbol{\Psi})_{\Omega} - \kappa^2 (\boldsymbol{\Psi},\boldsymbol{\Psi})_{\Omega} + (\Vi \nabla \varphi, \boldsymbol{\Psi})_{\Omega} = (\Vi \Bu_h, \boldsymbol{\Psi})_{\Omega},
\]
which together with the fact $( \nabla \varphi, \boldsymbol{\Psi})_{\Omega}=0$ and the  boundary condition (\ref{bc_dual_3}) yields
\[
(\curl \boldsymbol{\Psi}, \curl \boldsymbol{\Psi})_{\Omega} - \kappa^2 (\boldsymbol{\Psi},\boldsymbol{\Psi})_{\Omega} - \langle \Vi \kappa \boldsymbol{\Psi}^t, \boldsymbol{\Psi}^t \rangle_{\partial \Omega} = (\Vi \Bu_h, \boldsymbol{\Psi})_{\Omega}.
\]
Thus, taking the imaginary part of the left-hand side of the above equation and using (\ref{est_dual_3_new}) we have
\begin{align}
\label{est_dual_4}
\kappa \|\boldsymbol{\Psi}^t\|^2_{0, \partial \Omega} \leq \|\Bu_h\|_{\Ct_h} \|\boldsymbol{\Psi}\|_{0,\Omega} \leq C \kappa^{-1} \|\Bu_h\|^2_{\Ct_h}.
\end{align}

Next we present a key equality.

\begin{lemma}
\label{lemma_equality_sta}
Let $(\boldsymbol{\Phi},\boldsymbol{\Psi},\varphi)$ be the solution of  the dual problem (\ref{dual_FOS}). We have
\[
\|\Bu_h\|^2_{\Ct_h}  = \sum_{k=1}^6 T_k,
\]
where 
\begin{align*}
T_1 &= \langle \Bu^t_h \times \Bn - \widehat{\Bu}^t_h \times \Bn , \boldsymbol{\Phi} -\boldsymbol{\Pi_V \Phi} \rangle_{\partial \Ct_h},\\
T_2& =  \langle \Bu^n_h \cdot \Bn - \widehat{\Bu}^n_h \cdot \Bn , \varphi - \Pi_Q \varphi \rangle_{\partial \Ct_h},\\
T_3 &= - \langle\tau_t (\Bu^t_h-\widehat{\Bu}^t_h), \boldsymbol{\Psi} - \boldsymbol{\Pi_U\Psi}\rangle_{\partial \Ct_h},\\
T_4 &=- \langle \kappa \widehat{\Bu}^t_h+\widehat{\Bw}^t_h \times \Bn, \boldsymbol{\Psi}^t  \rangle_{\partial \Omega} ,\\
T_5 &= - (\Bf, \boldsymbol{\Pi_U\Psi})_{ \Ct_h},\\
T_6 &=  (\sigma_h, \divv(  \boldsymbol{\Psi} -\boldsymbol{\Pi_U\Psi} ))_{ \Ct_h}  - \langle \widehat{\sigma}_h, (  \boldsymbol{\Psi} -\boldsymbol{\Pi_U\Psi})\cdot \Bn \rangle_{\partial \Ct_h}.
\end{align*}
\end{lemma}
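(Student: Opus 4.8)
\emph{Proof sketch.} The plan is to run the usual HDG duality argument. Start from $\|\Bu_h\|^2_{\Ct_h}=(\Bu_h,\Bu_h)_{\Ct_h}$ and replace one copy of $\Bu_h$ by $\curl\boldsymbol{\Phi}+\Vi\kappa^2\boldsymbol{\Psi}+\nabla\varphi$ using the second equation of the dual first-order system~(\ref{dual_FOS}), which gives $\|\Bu_h\|^2_{\Ct_h}=(\Bu_h,\curl\boldsymbol{\Phi})_{\Ct_h}-(\Vi\kappa^2\Bu_h,\boldsymbol{\Psi})_{\Ct_h}+(\Bu_h,\nabla\varphi)_{\Ct_h}$, the middle term picking up a sign because $\Vi$ gets conjugated. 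Each of the three terms is then handled by the same recipe: integrate by parts element by element to move the operator onto the discrete unknown; use $\curl\Bu_h\in\BV_h$, $\divv\Bu_h\in Q_h$, $\curl\Bw_h\in\BU_h$ and $\Vi\kappa^2\Bu_h\in\BU_h$, together with $L^2$-orthogonality, to replace the exact dual field by its $L^2$-projection ($\boldsymbol{\Pi_V\Phi}$, $\Pi_Q\varphi$ or $\boldsymbol{\Pi_U\Psi}$) inside the volume integral; integrate by parts back; and cancel the volume integral by testing one of~(\ref{discrete_mixed_form_a})--(\ref{discrete_mixed_form_c}) against that projected field. Throughout I will use that $\boldsymbol{\Phi}$ and $\boldsymbol{\Psi}$ have $L^2$ curl, that $\boldsymbol{\Psi}$ has zero divergence, and that $\varphi\in H^1_0(\Omega)$ (all from the dual equation, the dual regularity, and the boundary conditions), so that $\boldsymbol{\Phi}\times\Bn$, $\boldsymbol{\Psi}\times\Bn$, $\boldsymbol{\Psi}\cdot\Bn$ and $\varphi$ are single-valued across interior faces.

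For $(\Bu_h,\nabla\varphi)_{\Ct_h}$: two integrations by parts and~(\ref{discrete_mixed_form_c}) with $q_h=\Pi_Q\varphi$ give $\langle\widehat{\Bu}^n_h\cdot\Bn,\Pi_Q\varphi\rangle_{\partial\Ct_h}+\langle\Bu^n_h\cdot\Bn,\varphi-\Pi_Q\varphi\rangle_{\partial\Ct_h}$; since $\varphi$ is single-valued and $\varphi|_{\partial\Omega}=0$, one may replace $\varphi$ by $P_M\varphi$ and invoke~(\ref{discrete_mixed_form_f}) with $\xi_h=P_M\varphi\in M_h(0)$ to get $\langle\widehat{\Bu}^n_h\cdot\Bn,\varphi\rangle_{\partial\Ct_h}=0$, which reduces the whole expression to $T_2$. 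For $(\Bu_h,\curl\boldsymbol{\Phi})_{\Ct_h}$: two integrations by parts and~(\ref{discrete_mixed_form_a}) with $\Br_h=\boldsymbol{\Pi_V\Phi}$ turn it into $(\Vi\Bw_h,\boldsymbol{\Pi_V\Phi})_{\Ct_h}$ plus skeleton terms; the single-valuedness of $\boldsymbol{\Phi}\times\Bn$ and of $\widehat{\Bu}^t_h$ reduces $\langle\widehat{\Bu}^t_h\times\Bn,\boldsymbol{\Phi}\rangle_{\partial\Ct_h}$ to its $\partial\Omega$ part, the dual impedance condition $\boldsymbol{\Phi}\times\Bn=\kappa\boldsymbol{\Psi}^t$ rewrites that as $-\langle\kappa\widehat{\Bu}^t_h,\boldsymbol{\Psi}^t\rangle_{\partial\Omega}$, and the projection-residual pieces assemble into $T_1$; so this term equals $(\Vi\Bw_h,\boldsymbol{\Pi_V\Phi})_{\Ct_h}+T_1-\langle\kappa\widehat{\Bu}^t_h,\boldsymbol{\Psi}^t\rangle_{\partial\Omega}$.

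It remains to unwind $(\Vi\Bw_h,\boldsymbol{\Pi_V\Phi})_{\Ct_h}$. Because $\Vi\boldsymbol{\Phi}=\curl\boldsymbol{\Psi}$ and $\Bw_h\in\BV_h$, it equals $-(\Bw_h,\curl\boldsymbol{\Psi})_{\Ct_h}$; two integrations by parts with $\curl\Bw_h\in\BU_h$ and~(\ref{discrete_mixed_form_b}) with $\Bv_h=\boldsymbol{\Pi_U\Psi}$ then produce: $-(\Bf,\boldsymbol{\Pi_U\Psi})_{\Ct_h}=T_5$; a term $(\Vi\kappa^2\Bu_h,\boldsymbol{\Pi_U\Psi})_{\Ct_h}$ that cancels the contribution $-(\Vi\kappa^2\Bu_h,\boldsymbol{\Psi})_{\Ct_h}=-(\Vi\kappa^2\Bu_h,\boldsymbol{\Pi_U\Psi})_{\Ct_h}$ of the middle term; the pair $-(\sigma_h,\divv\boldsymbol{\Pi_U\Psi})_{\Ct_h}+\langle\widehat{\sigma}_h,\boldsymbol{\Pi_U\Psi}\cdot\Bn\rangle_{\partial\Ct_h}$, which equals $T_6$ once $\divv\boldsymbol{\Psi}=0$, the single-valuedness of $\boldsymbol{\Psi}\cdot\Bn$ and $\widehat{\sigma}_h|_{\partial\Omega}=0$ are used; and a leftover $-\langle\widehat{\Bw}^t_h\times\Bn,\boldsymbol{\Pi_U\Psi}\rangle_{\partial\Ct_h}+\langle\Bn\times\Bw_h,\boldsymbol{\Psi}-\boldsymbol{\Pi_U\Psi}\rangle_{\partial\Ct_h}$. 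Rewriting $\Bw_h\times\Bn=\widehat{\Bw}^t_h\times\Bn+\tau_t(\Bu^t_h-\widehat{\Bu}^t_h)$ via~(\ref{hat_definition}) splits off exactly $T_3$ from this leftover and reduces the remainder to $-\langle\widehat{\Bw}^t_h\times\Bn,\boldsymbol{\Psi}^t\rangle_{\partial\Ct_h}$; its interior part vanishes by~(\ref{discrete_mixed_form_d}) with $\boldsymbol{\eta}_h=\BP_{\BM}\boldsymbol{\Psi}^t$ (the jump of $\widehat{\Bw}^t_h\times\Bn$ is a degree-$p$ polynomial, so testing it against $\BP_{\BM}\boldsymbol{\Psi}^t$ is the same as against $\boldsymbol{\Psi}^t$), and its $\partial\Omega$ part joins $-\langle\kappa\widehat{\Bu}^t_h,\boldsymbol{\Psi}^t\rangle_{\partial\Omega}$ to form $T_4$. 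Adding the three contributions gives $\|\Bu_h\|^2_{\Ct_h}=\sum_{k=1}^6 T_k$.

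The proof is essentially a long bookkeeping exercise, and that is exactly where the difficulty lies: one must carry the skeleton and boundary integrals correctly through two integrations by parts in each term while keeping straight which exact dual quantities are single-valued on interior faces versus which numerical traces are, the homogeneous values $\varphi|_{\partial\Omega}=0$ and $\widehat{\sigma}_h|_{\partial\Omega}=0$ together with the dual impedance condition, the polynomial degrees required to insert the $L^2$-projections, and the sign flips of $\Vi$ under conjugation---in particular the fact that the $\Vi\Bw_h$ volume term must first be re-expressed through $\Vi\boldsymbol{\Phi}=\curl\boldsymbol{\Psi}$ before it can be merged, via equation~(\ref{discrete_mixed_form_b}), with the $-(\Vi\kappa^2\Bu_h,\boldsymbol{\Psi})_{\Ct_h}$ term.
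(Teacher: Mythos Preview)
Your proof is correct and follows essentially the same duality bookkeeping as the paper's proof. The only organizational difference is that the paper inserts the zero quantity $(\Bw_h,\Vi\boldsymbol{\Phi}-\curl\boldsymbol{\Psi})_{\Ct_h}$ at the very first line, so that both dual equations are used simultaneously and all projected/residual pieces are split off in one stroke, whereas you first process $(\Bu_h,\curl\boldsymbol{\Phi})_{\Ct_h}$ via~(\ref{discrete_mixed_form_a}), obtain the extra volume term $(\Vi\Bw_h,\boldsymbol{\Pi_V\Phi})_{\Ct_h}$, and only then invoke $\Vi\boldsymbol{\Phi}=\curl\boldsymbol{\Psi}$ to convert it into $-(\Bw_h,\curl\boldsymbol{\Psi})_{\Ct_h}$ before feeding it through~(\ref{discrete_mixed_form_b}); the resulting boundary and skeleton terms assemble into $T_1,\ldots,T_6$ in exactly the same way.
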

\begin{proof}
Using the dual first-order system (\ref{dual_FOS}), we obtain
\begin{align}
\|\Bu_h\|^2_{\Ct_h} &= (\Bu_h, \curl \boldsymbol{\Phi} + \Vi\kappa^2 \boldsymbol{\Psi} + \nabla \varphi )_{\Ct_h} + (\Bw_h,\Vi \boldsymbol{\Phi} - \curl \boldsymbol{\Psi})_{\Ct_h}\nn\\
& = (\Bu_h, \curl \boldsymbol{\Pi_V \Phi} + \Vi\kappa^2 \boldsymbol{\Pi_U \Psi} + \nabla \Pi_Q \varphi)_{\Ct_h}  + (\Bw_h,\Vi \boldsymbol{\Pi_V \Phi} - \curl \boldsymbol{\Pi_U \Psi})_{\Ct_h}\nn\\
&\quad \ + (\Bu_h,\curl (\boldsymbol{\Phi} -\boldsymbol{\Pi_V \Phi}   ) )_{\Ct_h} + (\Bu_h,\Vi\kappa^2(  \boldsymbol{\Psi}  - \boldsymbol{\Pi_U\Psi} ))_{\Ct_h} + (\Bu_h, \nabla(\varphi- \Pi_Q \varphi))_{\Ct_h}\nn\\
&\quad \ + (\Bw_h,\Vi(\boldsymbol{\Phi} -\boldsymbol{\Pi_V\Phi} ))_{\Ct_h} - (\Bw_h, \curl (\boldsymbol{\Psi}-\boldsymbol{\Pi_U\Psi}  ))_{\Ct_h}. \label{proof_sta_1}
\end{align}
By the definitions of $\boldsymbol \Pi_U$ and $\boldsymbol \Pi_V$, we have $(\Bu_h,\Vi\kappa^2(  \boldsymbol{\Psi}  - \boldsymbol{\Pi_U\Psi} ))_{\Ct_h} =0$ and $(\Bw_h,\Vi(\boldsymbol{\Phi} -\boldsymbol{\Pi_V\Phi} ))_{\Ct_h}=0$. Integrating by parts and applying the property of the $L^2$-projections yields
\begin{align}
(\Bu_h,\curl (\boldsymbol{\Phi} -\boldsymbol{\Pi_V \Phi}   ) )_{\Ct_h} &= (\curl \Bu_h, \boldsymbol{\Phi} -\boldsymbol{\Pi_V \Phi})_{\Ct_h} + \langle \Bu_h \times \Bn, \boldsymbol{\Phi} -\boldsymbol{\Pi_V \Phi} \rangle_{\partial \Ct_h} \nn\\
&= \langle \Bu_h \times \Bn, \boldsymbol{\Phi} -\boldsymbol{\Pi_V \Phi} \rangle_{\partial \Ct_h}\nn\\
&= \langle \Bu^t_h \times \Bn, \boldsymbol{\Phi} -\boldsymbol{\Pi_V \Phi} \rangle_{\partial \Ct_h},\label{sta_equ_pre_1}
\end{align}
\begin{align}
 (\Bu_h, \nabla(\varphi - \Pi_Q \varphi))_{\Ct_h} &= -(\divv \Bu_h,\varphi-\Pi_Q \varphi)_{\Ct_h}+ \langle \Bu^n_h \cdot \Bn, \varphi-\Pi_Q\varphi\rangle_{\partial \Ct_h}  \nn\\
 & = \langle \Bu^n_h \cdot \Bn, \varphi-\Pi_Q\varphi\rangle_{\partial \Ct_h} ,\label{sta_equ_pre_2}
\end{align}
and
\begin{align}
- (\Bw_h, \curl (\boldsymbol{\Psi}-\boldsymbol{\Pi_U\Psi}  ))_{\Ct_h} &= -( \curl \Bw_h,  \boldsymbol{\Psi}-\boldsymbol{\Pi_U\Psi} )_{\Ct_h} - \langle \Bw_h \times \Bn, \boldsymbol{\Psi}-\boldsymbol{\Pi_U\Psi}\rangle_{\partial \Ct_h}\nn \\
& = - \langle \Bw^t_h \times \Bn, \boldsymbol{\Psi}^t \rangle_{\partial \Ct_h}  + \langle \Bw^t_h \times \Bn,\boldsymbol{\Pi_U\Psi}\rangle_{\partial \Ct_h}.\label{sta_equ_pre_3}
\end{align}
Taking $\Br_h =  \boldsymbol{\Pi_V \Phi}$ in the equation (\ref{discrete_mixed_form_a}), noting that $\widehat{\Bu}^t_h\times \Bn$ is continuous across each interior face and using the boundary condition (\ref{bc1_dual_FOS}), we obtain
\begin{align}
(\Bu_h, \curl \boldsymbol{\Pi_V \Phi} )_{\Ct_h} &= (\Vi \Bw_h,\boldsymbol{\Pi_V \Phi} )_{\Ct_h} + \langle \widehat{\Bu}^t_h\times \Bn, \boldsymbol{\Pi_V \Phi}\rangle_{\partial \Ct_h} \nn\\
& =  (\Vi \Bw_h,\boldsymbol{\Pi_V \Phi} )_{\Ct_h} + \langle \widehat{\Bu}^t_h\times \Bn, \boldsymbol{\Pi_V \Phi} -  \boldsymbol{ \Phi} \rangle_{\partial \Ct_h} - \langle \widehat{\Bu}^t_h, \boldsymbol{ \Phi}\times\Bn\rangle_{\partial \Omega}\nn\\
& =  (\Vi \Bw_h,\boldsymbol{\Pi_V \Phi} )_{\Ct_h} + \langle \widehat{\Bu}^t_h\times \Bn, \boldsymbol{\Pi_V \Phi} -  \boldsymbol{ \Phi} \rangle_{\partial \Ct_h} - \langle \widehat{\Bu}^t_h, \kappa \boldsymbol{\Psi}^t \rangle_{\partial \Omega}.\label{sta_equ_pre_4}
\end{align}
Taking $q_h = \Pi_Q \varphi$ in (\ref{discrete_mixed_form_c}) and noting that $\widehat{\Bu}^n_h \cdot \Bn$ is continuous across each interior face and $\varphi \in H^1_0(\Omega)$ we have
\begin{align}
(\Bu_h,\nabla \Pi_Q \varphi )_{\Ct_h} = \langle \widehat{\Bu}^n_h \cdot \Bn, \Pi_Q \varphi \rangle_{\partial \Ct_h}  = \langle \widehat{\Bu}^n_h \cdot \Bn, \Pi_Q \varphi - \varphi \rangle_{\partial \Ct_h}  .\label{sta_equ_pre_5}
\end{align}
We further take $\Bv_h = \boldsymbol{\Pi_U \Psi} $ in (\ref{discrete_mixed_form_b}) to get
\begin{align}
&-( \Bw_h,\curl  \boldsymbol{\Pi_U\Psi})_{\Ct_h} \nn\\
&= - \langle \widehat{\Bw}^t_h \times \Bn,\boldsymbol{\Pi_U\Psi}\rangle_{\partial \Ct_h} + ( \Vi \kappa^2 \Bu_h,\boldsymbol{\Pi_U\Psi})_{\Ct_h} - ( \sigma_h,\divv \boldsymbol{\Pi_U\Psi} )_{\Ct_h} \nn\\
&\quad + \langle \widehat{\sigma}_h,\boldsymbol{\Pi_U\Psi} \cdot \Bn \rangle_{\partial \Ct_h} - (\Bf, \boldsymbol{\Pi_U\Psi})_{\Ct_h}\nn\\
&= - \langle {\Bw}^t_h \times \Bn - \tau_t (\Bu^t_h-\widehat{\Bu}^t_h),\boldsymbol{\Pi_U\Psi} \rangle_{\partial \Ct_h} + ( \Vi \kappa^2 \Bu_h,\boldsymbol{\Pi_U\Psi} )_{\Ct_h}  - ( \sigma_h,\divv\boldsymbol{\Pi_U\Psi} -\divv \boldsymbol{\Psi} )_{\Ct_h} \nn\\
&\quad + \langle \widehat{\sigma}_h,\boldsymbol{\Pi_U\Psi} \cdot \Bn - \Psi \cdot \Bn \rangle_{\partial \Ct_h} - (\Bf, \boldsymbol{\Pi_U\Psi})_{\Ct_h},\label{sta_equ_pre_6}
\end{align}
where the above second equality holds due to the fact that $\divv \boldsymbol{\Psi}=0$, $\widehat{\sigma}_h$ is continuous across each interior face and $\widehat{\sigma}_h=0$ on $\Ce^{\partial}_h$. Inserting the above equalities (\ref{sta_equ_pre_1})-(\ref{sta_equ_pre_6}) into the right-hand side of (\ref{proof_sta_1}), we obtain the result. This completes the proof.
\end{proof}

We can now give the proof of Theorem \ref{stability_thm}.
\begin{proof} (Proof of Theorem \ref{stability_thm}) 
We derive the upper bounds for $T_1,\cdots,T_6$ in Lemma \ref{lemma_equality_sta} under the assumptions in Theorem \ref{stability_thm}. By the Cauchy-Schwarz inequality, the approximation properties of standard $L^2$-projections, the inequalities (\ref{est_dual_3_p}) and (\ref{est_dual_3_new}), we have
\begin{align*}
T_1& \leq C \| \tau_t^{\frac{1}{2}} (\Bu^t_h- \widehat{\Bu}^t_h)  \|_{\partial \Ct_h} \tau_t^{-\frac{1}{2}} (\frac{h}{p})^{\alpha}\|\boldsymbol{\Phi}\|_{\frac{1}{2}+\alpha,\Omega} \leq  C \| \tau_t^{\frac{1}{2}} (\Bu^t_h- \widehat{\Bu}^t_h)  \|_{\partial \Ct_h} \tau_t^{-\frac{1}{2}} (\frac{h}{p})^{\alpha} (1+\kappa) \|\Bu_h\|_{\Ct_h},\\
T_2 &= - \langle \tau_n(\sigma_h - \widehat{\sigma}_h) , \varphi - \Pi_Q \varphi \rangle_{\partial \Ct_h} \leq C \| \tau^{\frac{1}{2}}_n ( \sigma_h - \widehat{\sigma}_h ) \|_{\partial \Ct_h} \tau_n^{\frac{1}{2}} (\frac{h}{p})^{\frac{1}{2}} \| \varphi \|_{1,\Omega} \\&\leq C \| \tau^{\frac{1}{2}}_n ( \sigma_h - \widehat{\sigma}_h ) \|_{\partial \Ct_h} \tau_n^{\frac{1}{2}} (\frac{h}{p})^{\frac{1}{2}}  \|\Bu_h\|_{\Ct_h},\\
T_3 &\leq C \| \tau_t^{\frac{1}{2}} (\Bu^t_h- \widehat{\Bu}^t_h)  \|_{\partial \Ct_h} \tau_t^{\frac{1}{2}} (\frac{h}{p})^{\alpha}\|\boldsymbol{\Psi}\|_{\frac{1}{2}+\alpha,\Omega} \leq  C \| \tau_t^{\frac{1}{2}} (\Bu^t_h- \widehat{\Bu}^t_h)  \|_{\partial \Ct_h} \tau_t^{\frac{1}{2}} (\frac{h}{p})^{\alpha} (1+1/\kappa) \|\Bu_h\|_{\Ct_h}.
\end{align*}
Taking $\boldsymbol{\eta}_h = \BP_{\BM} \boldsymbol{\Psi}^t$ in (\ref{discrete_mixed_form_e}) and using the property of the $L^2$-projection operator $\BP_{\BM}$ on $\Ce^{\partial}_h$ and the inequality (\ref{est_dual_4}) yields
\[
T_4 = - \langle \kappa \widehat{\Bu}^t_h+\widehat{\Bw}^t_h \times \Bn,  \BP_{\BM}\boldsymbol{\Psi}^t  \rangle_{\partial \Omega}  = -\langle \Bg , \BP_{\BM}\boldsymbol{\Psi}^t  \rangle_{\partial \Omega}  \leq \|\Bg\|_{0, \partial \Omega} \| \boldsymbol{\Psi}^t \|_{0,\partial \Omega}  \leq C \kappa^{-1} \|\Bg\|_{0,\partial \Omega} \|\Bu_h\|_{\Ct_h}.
\]
For the estimate of $T_5$, we easily deduce
\begin{align*}
T_5 \leq  \| \Bf  \|_{0,\Omega}  \| \boldsymbol{\Psi}  \|_{0,\Omega}  \leq C \| \Bf  \|_{0,\Omega} \kappa^{-1}  \|\Bu_h\|_{\Ct_h}.
\end{align*}
Applying integration by parts on $T_6$, we have
\begin{align*}
T_6 &= -(\nabla \sigma_h,    \boldsymbol{\Psi}-\boldsymbol{\Pi_U\Psi} )_{ \Ct_h}  + \langle \sigma_h - \widehat{\sigma}_h, (\boldsymbol{\Psi} -\boldsymbol{\Pi_U\Psi} )\cdot \Bn \rangle_{\partial \Ct_h} = \langle \sigma_h - \widehat{\sigma}_h, (\boldsymbol{\Psi} -\boldsymbol{\Pi_U\Psi} )\cdot \Bn \rangle_{\partial \Ct_h} \\
&\leq C \| \tau^{\frac{1}{2}}_n (\sigma_h - \widehat{\sigma}_h) \|_{\partial \Ct_h} \tau^{-\frac{1}{2}}_n  (\frac{h}{p})^{\alpha} \|\boldsymbol{\Psi}  \|_{\frac{1}{2}+\alpha,\Omega} \leq C \| \tau^{\frac{1}{2}}_n (\sigma_h - \widehat{\sigma}_h) \|_{\partial \Ct_h} \tau^{-\frac{1}{2}}_n  (\frac{h}{p})^{\alpha} (1+1/\kappa)  \|\Bu_h\|_{\Ct_h}.
\end{align*}
Combining the above estimates for $T_1,\cdots, T_6$, we obtain
\begin{align}
\|\Bu_h\|^2_{\Ct_h} & \leq C \kappa^{-1}\left(\| \Bf  \|_{0,\Omega}+  \|\Bg\|_{0,\partial \Omega} \right)\|\Bu_h\|_{\Ct_h}\nn \\
&\quad + C
\big(\tau_t^{-\frac{1}{2}} (\frac{h}{p})^{\alpha}(1+\kappa)  +  \tau_t^{\frac{1}{2}} (\frac{h}{p})^{\alpha}  (1+1/\kappa) \big)\| \tau_t^{\frac{1}{2}} (\Bu^t_h- \widehat{\Bu}^t_h)  \|_{\partial \Ct_h} \|\Bu_h\|_{\Ct_h} \nn\\
& \quad + C \big( \tau_n^{\frac{1}{2}} (\frac{h}{p})^{\frac{1}{2}}+\tau^{-\frac{1}{2}}_n  (\frac{h}{p})^{\alpha} (1+1/\kappa) \big)  \| \tau^{\frac{1}{2}}_n (\sigma_h - \widehat{\sigma}_h) \|_{\partial \Ct_h}    \|\Bu_h\|_{\Ct_h}. \label{uh_estimate_basis}
\end{align}
Here we choose $\tau_t = \frac{p}{h}$ and $\tau_n = \frac{(1+\kappa) h}{p}$. By the Young's inequality, we have
\begin{align}
\|\Bu_h\|^2_{\Ct_h} \leq C \Big(& \kappa^{-2}\| \Bf  \|^2_{0,\Omega}+  \kappa^{-2}\|\Bg\|^2_{0,\partial \Omega} \nn\\
&+ \big((1+\kappa)^2 (\frac{h}{p})^{2\alpha+1}  +(1+1/\kappa)^2(\frac{h}{p})^{2\alpha-1} \big)\| \tau_t^{\frac{1}{2}} (\Bu^t_h- \widehat{\Bu}^t_h)  \|^2_{\partial \Ct_h} \nn\\
&+ \big((1+\kappa) (\frac{h}{p})^2  + \frac{(1+\kappa)}{\kappa^2} (\frac{h}{p})^{2\alpha-1}\big)\| \tau^{\frac{1}{2}}_n (\sigma_h - \widehat{\sigma}_h) \|^2_{\partial \Ct_h}  \Big).\nn
\end{align}
Combining the above estimate and (\ref{estimate_mixed_1}), the absolutely stable property of $\|\Bu_h\|_{\Ct_h}$ can be easily observed, and we can further obtain (\ref{stability_u_h1}) by the Young's inequality in the regime $\kappa h/p \geq C_0$. Then, by the estimate (\ref{estimate_mixed_2}), we can also see the absolutely stable property of $\|\Bw_h\|_{\Ct_h}$ and have
\begin{align}
\|\Bw_h\|^2_{\Ct_h}  \leq2 \kappa^2 \|\Bu_h\|^2_{\Ct_h} + \|\frac{\Bf}{\kappa}\|^2_{0,\Omega}  + \frac{1}{\kappa} \|\Bg\|^2_{0,\partial \Omega}.\nn
\end{align}
Then (\ref{stability_w_h1}) is derived by (\ref{stability_u_h1}). Furthermore, combining the fact that (cf. \cite{Schwab98}) 
\begin{align}
\| \Bu^t_h \|_{\partial \Ct_h}\leq C p h^{-\frac{1}{2}} \|\Bu_h\|_{\Ct_h},\nn
\end{align}
(\ref{estimate_mixed_1}), (\ref{stability_u_h1}) and the triangular inequality yields the absolutely stable property of $\| \widehat{\Bu}^t_h \|_{\partial \Ct_h}$ and the estimate (\ref{stability_ut_h1}).

When $\Bf =0$ and $\Bg=0$ in the first-order system (\ref{pde_mixed_first_order}), the estimates (\ref{stability_u_h1})-(\ref{stability_ut_h1}) and Lemma  \ref{stability_lemma} imply $\Bw_h=0,\Bu_h=0$ on $\Ct_h$ and $\widehat{\Bu}^t_h=0, \sigma_h =   \widehat{\sigma}_h$ on $\partial \Ct_h$. It then follows from (\ref{discrete_mixed_form_b}) that for any $\Bv_h \in \BU_h$,
\[
- ( \sigma_h,\divv \Bv_h )_{\Ct_h} + \langle \widehat{\sigma}_h, \Bv_h \cdot \Bn \rangle_{\partial \Ct_h}= ( \nabla \sigma_h, \Bv_h )_{\Ct_h} -  \langle {\sigma}_h, \Bv_h \cdot \Bn \rangle_{\partial \Ct_h}+\langle \widehat{\sigma}_h, \Bv_h \cdot \Bn \rangle_{\partial \Ct_h} = 0,
\]
which implies $\sigma_h$ is piecewise constant on $\Ct_h$. Due to the fact that $\sigma_h = \widehat{\sigma}_h=0$ on $\partial \Omega$, we have $\sigma_h = 0$ on $\Ct_h$ and $\widehat{\sigma}_h=0$ on $\partial \Ct_h$. Hence, the well-posedness of the HDG method (\ref{discrete_mixed_form}) always holds without imposing any mesh constraint, i.e., the HDG method (\ref{discrete_mixed_form}) is absolutely stable.
\end{proof}

Moreover, under the assumptions made in Theorem \ref{stability_thm}, we can further get the upper bounds for $\|\curl \Bu_h\|_{\Ct_h}$ and $\|\divv \Bu_h  \|_{\Ct_h}$. We take $\Br_h = \curl \Bu_h$ in (\ref{discrete_mixed_form_a}) to get
\[
(\Vi \Bw_h,\curl \Bu_h)_{\Ct_h} - (\Bu_h,\curl \curl \Bu_h)_{\Ct_h} + \langle\widehat{\Bu}^t_h\times \Bn, \curl \Bu_h \rangle_{\partial \Ct_h}  =0 .
\]
Using integration by parts on the above equation, we have
\[
(\Vi \Bw_h,\curl \Bu_h)_{\Ct_h} - ( \curl \Bu_h,\curl \Bu_h )_{\Ct_h} - \langle{\Bu}^t_h\times \Bn, \curl \Bu_h \rangle_{\partial \Ct_h}  + \langle\widehat{\Bu}^t_h\times \Bn, \curl \Bu_h \rangle_{\partial \Ct_h}  =0,
\]
which directly yields
\[
\|  \curl \Bu_h \|^2_{\Ct_h} \leq \| \Bw_h \|_{\Ct_h}\|  \curl \Bu_h \|_{\Ct_h}  + C  \| \tau_t^{\frac{1}{2}} (\Bu^t_h- \widehat{\Bu}^t_h)  \|_{\partial \Ct_h} \tau_t^{-\frac{1}{2}} p h^{-\frac{1}{2}} \|  \curl \Bu_h \|_{\Ct_h} .
\]
Combining the above inequality, (\ref{estimate_mixed_1}), (\ref{stability_u_h1}) and (\ref{stability_w_h1}), then we get
\[
\| \curl \Bu_h \|_{\Ct_h}\leq C \Big( \big(\frac{1}{\kappa} + p^{\frac{1}{2}} +(1+\frac{p^{\frac{1}{2}}}{\kappa})C^2_{\rm stab}  \big)\|\Bf\|_{0,\Omega}+  \big( \frac{1}{\kappa^{\frac{1}{2}}}  + \frac{p^{\frac{1}{2}}}{ \kappa^{\frac{1}{2}} } +(1+\frac{p^{\frac{1}{2}}}{\kappa})C_{\rm stab}    \big)\|\Bg\|_{0,\partial\Omega} \Big).
\]
Taking $q_h = \divv \Bu_h$ in (\ref{discrete_mixed_form_c}) and using integration by parts, we have
\[
\|\divv \Bu_h  \|^2_{\Ct_h} = \langle {\Bu}^n_h\cdot \Bn-\widehat{\Bu}^n_h\cdot \Bn,\divv \Bu_h \rangle_{\partial \Ct_h}\leq C \| \tau^{\frac{1}{2}}_n ( \sigma_h -   \widehat{\sigma}_h) \|_{\partial \Ct_h}  \tau^{\frac{1}{2}}_n p h^{-\frac{1}{2}} \|\divv \Bu_h  \|_{\Ct_h}.
\]
Then we obtain the upper bound for  $\|\divv \Bu_h  \|_{\Ct_h}$ by the above estimate, (\ref{estimate_mixed_1}) and (\ref{stability_u_h1}) as follows:
\[
\| \divv \Bu_h \|_{\Ct_h} \leq C (1+\kappa)^{\frac{1}{2}}p^{\frac{1}{2}}\Big( (1+\frac{C^2_{\rm stab}}{\kappa}) \| \Bf \|_{0,\Omega}   + \big( \frac{1}{\kappa^{\frac{1}{2}}} + \frac{C_{\rm stab}}{\kappa} \big) \|\Bg\|_{0,\partial \Omega}   \Big).
\]

\begin{remark}\label{remark_sta_1}
By the estimates (\ref{estimate_mixed_1}) and (\ref{stability_u_h1}), we can get the upper bound for $\| \tau^{\frac{1}{2}}_n ( \sigma_h -   \widehat{\sigma}_h) \|_{\partial \Ct_h}  $. Moreover, taking $\Bv_h = \nabla \sigma_h$ in (\ref{discrete_mixed_form_b}) and applying integration by parts, the Cauchy-Schwarz inequality, trace inequality and the estimates in Lemma \ref{stability_lemma} and Theorem \ref{stability_thm}, we can also get the stability estimate for $\| \nabla \sigma_h \|_{\Ct_h}$. When $\kappa h/p \leq C_0$, one may tune the parameters $\tau_t$ and $\tau_n$ according to the derivation of upper bound for the right-hand side of (\ref{uh_estimate_basis}) and get the stability estimates.
\end{remark}

\section{Error analysis}\label{error_analysis}
In this section we provide detailed proofs of the a priori error estimates in Theorem \ref{error_thm}. We denote
\begin{align*}
&\Be_{\Bw} = \boldsymbol{\Pi}_{\BV} \Bw-\Bw_h,\quad \Be_{\widehat{\Bw}^t} = \BP_{\BM} \Bw^t-\widehat{\Bw}^t_h,\quad \Be_{\Bu} = \boldsymbol{\Pi}_{\BU} \Bu-\Bu_h,\quad \Be_{\widehat{\Bu}^t} = \BP_{\BM} \Bu^t-\widehat{\Bu}^t_h,\\
&\Be_{\widehat{\Bu}^n} = \BP_{\BM} \Bu^n-\widehat{\Bu}^n_h,\quad e_{\sigma} = \Pi_{Q} \sigma - \sigma_h, \quad e_{\widehat{\sigma}}  = P_M \sigma - \widehat{\sigma}_h.
\end{align*}
In the following we first present the error equation for the analysis.

\begin{lemma}
Let $(\Bw,\Bu,\sigma)$ and $(\Bw_h,\Bu_h,\widehat{\Bu}^t_h,\sigma_h, \widehat{\sigma}_h) $ solve the equations (\ref{pde_mixed_first_order}) and (\ref{discrete_mixed_form}). We have
\begin{subequations}
\label{error_eq}
\begin{align}
\label{error_eq_a}
&(\Vi \Be_{\Bw},\Br_h)_{\Ct_h} - (\Be_{\Bu},\curl \Br_h)_{\Ct_h} + \langle \Be_{\widehat{\Bu}^t} \times \Bn, \Br_h\rangle_{\partial \Ct_h} = 0,\\
\label{error_eq_b}
&(\Be_{\Bw},\curl \Bv_h)_{\Ct_h} -  \langle \Be_{\widehat{\Bw}^t} \times \Bn, \Bv_h\rangle_{\partial \Ct_h} + (\Vi \kappa^2\Be_{\Bu},\Bv_h)_{\Ct_h} \nn\\
&\qquad \qquad \qquad  \ - (e_{\sigma},\divv \Bv_h)_{\Ct_h}+ \langle e_{\widehat{\sigma}},\Bv_h \cdot \Bn \rangle_{\partial \Ct_h}=0,\\
\label{error_eq_c}
&-(\Be_{\Bu},\nabla q_h)_{\Ct_h}+ \langle \Be_{\widehat{\Bu}^n} \cdot \Bn, q_h\rangle_{\partial \Ct_h} = 0,\\
\label{error_eq_d}
&  \langle \Be_{\widehat{\Bw}^t} \times \Bn, \boldsymbol{\eta}_h\rangle_{\partial \Ct_h \setminus \partial \Omega}=0,\\
\label{error_eq_e}
&\langle \Be_{\widehat{\Bw}^t} \times \Bn, \boldsymbol{\eta}_h\rangle_{ \partial \Omega}+ \langle \kappa \Be_{\widehat{\Bu}^t}, \boldsymbol{\eta}_h\rangle_{ \partial \Omega}=0,\\
\label{error_eq_f}
&\langle \Be_{\widehat{\Bu}^n} \cdot \Bn,{\xi}_h \rangle_{\partial \Ct_h}=0,
\end{align}
\end{subequations}
for all $(\Br_h, \Bv_h, \boldsymbol{\eta}_h, q_h,\xi_h) \in \BV_h \times \BU_h \times \BM^t_h \times  Q_h \times M_h(0)$.
\end{lemma}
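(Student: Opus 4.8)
The plan is to obtain these identities as the Galerkin orthogonality of the scheme, from two ingredients: \emph{consistency} of the HDG method, and the defining $L^{2}$-orthogonality of the projections $\boldsymbol{\Pi_V},\boldsymbol{\Pi_U},\Pi_Q,\BP_{\BM},P_M$. First I would prove consistency: the exact solution $(\Bw,\Bu,\sigma)$ of (\ref{pde_mixed_first_order}) satisfies (\ref{discrete_mixed_form_a})--(\ref{discrete_mixed_form_f}) for every discrete test tuple $(\Br_h,\Bv_h,\boldsymbol{\eta}_h,q_h,\xi_h)$, provided $(\Bw_h,\Bu_h,\widehat{\Bu}^t_h,\sigma_h,\widehat{\sigma}_h)$ is replaced by $(\Bw,\Bu,\Bu^t,\sigma,\sigma)$ and, in (\ref{hat_definition}), $\widehat{\Bw}_h$ by $\Bw$ and $\widehat{\Bu}^n_h$ by $\Bu^n$. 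This is checked by testing each equation of (\ref{pde_mixed_first_order}) against a discrete function and integrating by parts element by element, using the same Green's identities as in the proof of Lemma~\ref{stability_lemma}, e.g.\ $(\curl\Bw,\Bv_h)_{\Ct_h}=(\Bw,\curl\Bv_h)_{\Ct_h}-\langle\Bw^t\times\Bn,\Bv_h\rangle_{\partial\Ct_h}$ and $(\nabla\sigma,\Bv_h)_{\Ct_h}=-(\sigma,\divv\Bv_h)_{\Ct_h}+\langle\sigma,\Bv_h\cdot\Bn\rangle_{\partial\Ct_h}$. The key point is that $\Bu^t$, $\Bu\cdot\Bn$ and $\Bw^t$ are single-valued across interior faces, so the element-boundary contributions reassemble exactly into the skeleton terms of (\ref{discrete_mixed_form}); this single-valuedness follows from the assumed regularity $(\Bw,\Bu)\in\BH^{\frac{1}{2}+\alpha}(\Omega)\times\BH^{\frac{1}{2}+\alpha}(\Omega)$ together with $\curl\Bu=\Vi\Bw$ and $\divv\Bu=0$. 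The boundary condition $\Bw\times\Bn+\kappa\Bu^t=\Bg$ on $\partial\Omega$ yields (\ref{discrete_mixed_form_e}), the condition $\sigma=0$ on $\partial\Omega$ is consistent with $\widehat{\sigma}_h\in M_h(0)$, and the stabilization terms $\tau_t(\Bu^t_h-\widehat{\Bu}^t_h)\times\Bn$, $\tau_n(\sigma_h-\widehat{\sigma}_h)\Bn$ in (\ref{hat_definition}) vanish identically at the exact solution.

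I would then subtract (\ref{discrete_mixed_form_a})--(\ref{discrete_mixed_form_f}) from these consistency identities, obtaining for all discrete test functions the relations (\ref{error_eq_a})--(\ref{error_eq_f}) but with $\Bw-\Bw_h$, $\Bu-\Bu_h$, $\sigma-\sigma_h$, $\Bw^t-\widehat{\Bw}^t_h$, $\Bu^t-\widehat{\Bu}^t_h$, $\Bu^n-\widehat{\Bu}^n_h$, $\sigma-\widehat{\sigma}_h$ in the positions now held by $\Be_{\Bw},\Be_{\Bu},e_{\sigma},\Be_{\widehat{\Bw}^t},\Be_{\widehat{\Bu}^t},\Be_{\widehat{\Bu}^n},e_{\widehat{\sigma}}$. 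Splitting each difference through its $L^{2}$-projection, e.g.\ $\Bw-\Bw_h=(\Bw-\boldsymbol{\Pi_V}\Bw)+\Be_{\Bw}$, $\Bu^t-\widehat{\Bu}^t_h=(\Bu^t-\BP_{\BM}\Bu^t)+\Be_{\widehat{\Bu}^t}$, $\sigma-\widehat{\sigma}_h=(\sigma-P_M\sigma)+e_{\widehat{\sigma}}$, I would show every term carrying a projection remainder drops out. The volume remainders vanish since $\Br_h\in\BV_h$, $\Bv_h\in\BU_h$ and, on each element, $\curl\Br_h,\curl\Bv_h,\nabla q_h\in\BV_h=\BU_h$ while $\divv\Bv_h\in Q_h$ (each being of degree $p-1$); thus $(\Bw-\boldsymbol{\Pi_V}\Bw,\Br_h)_{\Ct_h}=(\Bu-\boldsymbol{\Pi_U}\Bu,\curl\Br_h)_{\Ct_h}=(\Bu-\boldsymbol{\Pi_U}\Bu,\nabla q_h)_{\Ct_h}=(\Bw-\boldsymbol{\Pi_V}\Bw,\curl\Bv_h)_{\Ct_h}=(\sigma-\Pi_Q\sigma,\divv\Bv_h)_{\Ct_h}=0$, and similarly with $\Bu-\boldsymbol{\Pi_U}\Bu$ against $\Bv_h$. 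The face remainders vanish because, on each (flat) face, $(\Ba\times\Bn)\cdot\overline{\Bb}=\Ba\cdot\overline{(\Bn\times\Bb)}$ and $(\Ba\cdot\Bn)\,\overline{q}=\Ba\cdot\overline{(q\,\Bn)}$, while $\Bn\times\Br_h$, $\Bn\times\Bv_h$, $\Bn\times\boldsymbol{\eta}_h$, $q_h\,\Bn$, $\xi_h\,\Bn$, $\boldsymbol{\eta}_h$ all lie in $\BP_p(\Ce_h)$ and $\Bv_h\cdot\Bn$ lies in $P_p(\Ce_h)$; hence $\langle(\Bu^t-\BP_{\BM}\Bu^t)\times\Bn,\Br_h\rangle_{\partial\Ct_h}=\langle\Bu^t-\BP_{\BM}\Bu^t,\Bn\times\Br_h\rangle_{\partial\Ct_h}=0$, $\langle(\Bu^n-\BP_{\BM}\Bu^n)\cdot\Bn,q_h\rangle_{\partial\Ct_h}=0$, $\langle\sigma-P_M\sigma,\Bv_h\cdot\Bn\rangle_{\partial\Ct_h}=0$, and likewise for the remaining boundary terms. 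Collecting the surviving terms gives (\ref{error_eq_a})--(\ref{error_eq_f}).

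I expect the only genuinely delicate step to be consistency: one must verify that the regularity of $(\Bw,\Bu)$ makes precisely the traces appearing in (\ref{discrete_mixed_form}) single-valued, so that the element-by-element integration by parts reproduces the skeleton terms of the scheme with no residual interface jumps, and that the stabilization and boundary-data terms are matched exactly, including the sign and complex-conjugation conventions of $(\cdot,\cdot)_{\Ct_h}$ and $\langle\cdot,\cdot\rangle_{\partial\Ct_h}$. After that the argument is routine bookkeeping over the six equations.
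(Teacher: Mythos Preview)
Your proposal is correct and follows essentially the same approach as the paper: the paper's proof is a two-sentence statement that the exact solution $(\Bw,\Bu,\Bu^t|_{\Ce_h},\sigma,\sigma|_{\Ce_h})$ satisfies the HDG system (consistency), that the $L^2$-projection properties let one replace each exact quantity by its projection in these identities, and that subtracting (\ref{discrete_mixed_form}) then yields (\ref{error_eq}). Your writeup simply unpacks those two sentences, making the projection-orthogonality checks explicit term by term; the argument is the same.
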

\begin{proof}
We notice that the exact solution $(\Bw,\Bu,\Bu^t|_{\Ce_h},\sigma,\sigma|_{\Ce_h})$ also satisfies the equation (\ref{discrete_mixed_form}). Hence, due to the property of standard $L^2$-projection, the solutions $\Bw_h$, $\widehat{\Bw}^t_h|_{\Ce_h}$, $\Bu_h$, $\widehat{\Bu}^t_h|_{\Ce_h}$, $\widehat{\Bu}^n_h|_{\Ce_h}$, $\sigma_h$, $\widehat{\sigma}_h|_{\Ce_h} $ in the equation (\ref{discrete_mixed_form}) can be replaced by $\boldsymbol{\Pi_V}\Bw$, $\boldsymbol{P_M}\Bw^t|_{\Ce_h}$, $\boldsymbol{\Pi_u}\Bu$, $\boldsymbol{P_M}\Bu^t|_{\Ce_h}$, $\boldsymbol{P_M}\Bu^n|_{\Ce_h}$, $\Pi_Q\sigma, P_M\sigma|_{\Ce_h} $ respectively to derive a new equation, which subtracts the equation (\ref{discrete_mixed_form}) to yield the result.
\end{proof}

Next we are going to present our first error estimate.
\begin{lemma}
\label{error_analysis_1}
If we choose $\tau_t = \frac{p}{h}$ and $\tau_n = \frac{(1+\kappa) h}{p}$, we have
\begin{align}
& \|\kappa^{\frac{1}{2}}  \Be_{\widehat{\Bu}^t} \|_{0, \partial \Omega } + \|  \tau_t^{\frac{1}{2}}(\Be_{\Bu}^t- \Be_{\widehat{\Bu}^t} )   \|_{\partial \Ct_h}  + \|\tau_n^{\frac{1}{2}}  ( e_\sigma- e_{\widehat{\sigma}} ) \|_{\partial \Ct_h} \leq C \eta(\Bw,\Bu)  , \label{pre_error_analysis_1} \\
&\| \Be_{\Bw} \|_{\Ct_h} \leq \kappa \| \Be_{\Bu} \|_{\Ct_h}  + C\eta(\Bw,\Bu),\label{pre_error_analysis_2} 
\end{align}
where $\eta(\Bw,\Bu) =  \frac{h^t}{p^t}\|\Bw\|_{t,\Omega} + \big(1+(1+\kappa) ^{-\frac{1}{2}}\big) \frac{h^{s-1}}{p^{s-1}} \|\Bu\|_{s,\Omega} $, $s>\frac{1}{2},t>\frac{1}{2}$.
\end{lemma}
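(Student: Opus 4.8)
The plan is to test the error equations \eqref{error_eq_a}--\eqref{error_eq_f} against a carefully chosen collection of discrete functions built from the error quantities themselves, in the same spirit as the energy identity in Lemma \ref{stability_lemma}. Concretely, I would take $\Br_h = \Be_{\Bw}$ in \eqref{error_eq_a}, $\Bv_h = \Be_{\Bu}$ in \eqref{error_eq_b}, $q_h = e_\sigma$ in \eqref{error_eq_c}, $\boldsymbol{\eta}_h = \Be_{\widehat{\Bu}^t}$ in \eqref{error_eq_e} (and the trivial \eqref{error_eq_d}), and $\xi_h = e_{\widehat{\sigma}}$ in \eqref{error_eq_f}. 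After integrating by parts to move $\curl$ and $\divv$ off the trial functions, invoking the definitions of the numerical traces $\widehat{\Bw}_h$ and $\widehat{\Bu}^n_h$ in \eqref{hat_definition}, using that $\Be_{\widehat{\Bu}^t}\times\Bn$ and $\Be_{\widehat{\Bu}^n}\cdot\Bn$ are single-valued on interior faces and vanish (for the $\sigma$-trace) on $\partial\Omega$, and then adding the (complex-conjugated where needed) identities together, the $\curl\Be_{\Bw}$, $\divv\Be_{\Bu}$ and the indefinite $\kappa^2$ terms should cancel in the real/imaginary parts, leaving an identity of the form
\begin{align*}
\|\tau_t^{\frac{1}{2}}(\Be_{\Bu}^t-\Be_{\widehat{\Bu}^t})\|^2_{\partial\Ct_h} + \|\tau_n^{\frac{1}{2}}(e_\sigma-e_{\widehat\sigma})\|^2_{\partial\Ct_h} + \kappa\|\Be_{\widehat{\Bu}^t}\|^2_{0,\partial\Omega} = \mathcal{R},
\end{align*}
where $\mathcal{R}$ collects the consistency terms coming from the fact that the $L^2$-projections of the exact solution do not satisfy the scheme exactly.

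The second step is to bound $\mathcal{R}$. This residual is a sum of interface and volume pairings of the projection errors $\Bw-\boldsymbol{\Pi_V}\Bw$, $\Bu-\boldsymbol{\Pi_U}\Bu$, $\sigma-\Pi_Q\sigma$, $\Bw^t-\boldsymbol{P_M}\Bw^t$, etc., against the discrete error quantities; I would estimate each by Cauchy--Schwarz, pulling out $\tau_t^{\pm 1/2}$ or $\tau_n^{\pm 1/2}$ factors so that one factor matches a term already on the left-hand side, and applying the $hp$-approximation estimates \eqref{es_pj_1}--\eqref{es_pj_9} together with discrete trace/inverse inequalities to the remaining factor. A key simplification here is that $\divv\Bu=0$ and $\divv\Bw$-free structure kill some terms, and $\widehat\sigma_h=P_M\sigma=0$ on $\partial\Omega$ removes the boundary $\sigma$-contributions. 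With the specific choices $\tau_t = p/h$ and $\tau_n = (1+\kappa)h/p$ plugged in, the approximation rates collapse to exactly the quantity $\eta(\Bw,\Bu) = (h/p)^t\|\Bw\|_{t,\Omega} + (1+(1+\kappa)^{-1/2})(h/p)^{s-1}\|\Bu\|_{s,\Omega}$; the factor $(1+\kappa)^{-1/2}$ arises from the $\tau_n^{-1/2}$ weight attached to the $\Bu$-projection terms on element boundaries. Absorbing the left-matching factors by Young's inequality gives \eqref{pre_error_analysis_1}, except that $\mathcal{R}$ also contains a term proportional to $\kappa\|\Be_{\Bu}\|_{\Ct_h}$ times an approximation factor; this is harmless for \eqref{pre_error_analysis_1} only if it can be controlled, so one must be a little careful—but in fact such a term appears paired with a projection error of order at least $(h/p)^{s}$, which is an extra power of $h/p$ over what $\eta$ already contains, so after Young it is dominated.

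For \eqref{pre_error_analysis_2}, I would return to \eqref{error_eq_a} with $\Br_h=\Be_{\Bw}$, integrate by parts, and read off
\begin{align*}
\|\Be_{\Bw}\|^2_{\Ct_h} \le \|\Be_{\Bu}\|_{\Ct_h}\|\curl\Be_{\Bw}\|_{\Ct_h} + \|\tau_t^{\frac{1}{2}}(\Be_{\Bu}^t-\Be_{\widehat{\Bu}^t})\|_{\partial\Ct_h}\,\tau_t^{-\frac{1}{2}}\|\Be_{\Bw}\times\Bn\|_{\partial\Ct_h} + (\text{projection terms}),
\end{align*}
then use the inverse inequalities $\|\curl\Be_{\Bw}\|_{\Ct_h}\le C(p^2/h)\|\Be_{\Bw}\|_{\Ct_h}$ — wait, that is too crude; instead the right move is to choose $\Br_h = \curl\Be_{\Bu}$-type test function or, more simply, to test \eqref{error_eq_a} with $\Br_h = \Be_{\Bw}$ and bound $\langle\Be_{\widehat{\Bu}^t}\times\Bn,\Be_{\Bw}\rangle$ by splitting $\Be_{\widehat{\Bu}^t} = (\Be_{\widehat{\Bu}^t}-\Be_{\Bu}^t) + \Be_{\Bu}^t$, moving $\Be_{\Bu}^t$ back into a volume $\curl$ term by integration by parts in reverse, which produces exactly the $(\Be_{\Bu},\curl\Be_{\Bw})$ that recombines with the first term so that $(\Vi\Be_{\Bw},\Be_{\Bw}) = (\Be_{\Bu},\curl\Be_{\Bw}) - \langle\cdots\rangle$ is manipulated into $\|\Be_{\Bw}\|_{\Ct_h}\le\kappa\|\Be_{\Bu}\|_{\Ct_h}+\cdots$ after one also uses \eqref{error_eq_b} tested against something proportional to $\Be_{\Bw}$; combining with \eqref{pre_error_analysis_1} already proved yields \eqref{pre_error_analysis_2}. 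The main obstacle, I expect, is the bookkeeping in the first step: ensuring that \emph{all} indefinite and $\curl$/$\divv$ cross-terms cancel cleanly when the six tested equations and their conjugates are summed, and tracking precisely which $\tau_t,\tau_n$ powers attach to which projection error so that the final bound is genuinely $\eta(\Bw,\Bu)$ with the stated $(1+\kappa)^{-1/2}$ weight and no spurious positive power of $\kappa$. Everything after that is a routine application of \eqref{es_pj_1}--\eqref{es_pj_9}, discrete trace inequalities, and Young's inequality.
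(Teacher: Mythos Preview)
Your plan for \eqref{pre_error_analysis_1} is essentially the paper's argument: test the error equations with $(\Be_{\Bw},\Be_{\Bu},\Be_{\widehat{\Bu}^t},e_\sigma,e_{\widehat\sigma})$, integrate by parts, conjugate where needed, and add. The resulting identity is exactly the paper's \eqref{error_eq5}, whose left-hand side is
\[
-\Vi\|\Be_{\Bw}\|^2_{\Ct_h}+\Vi\kappa^2\|\Be_{\Bu}\|^2_{\Ct_h}+\kappa\|\Be_{\widehat{\Bu}^t}\|^2_{0,\partial\Omega}+\|\tau_t^{1/2}(\Be_{\Bu}^t-\Be_{\widehat{\Bu}^t})\|^2_{\partial\Ct_h}+\|\tau_n^{1/2}(e_\sigma-e_{\widehat\sigma})\|^2_{\partial\Ct_h}.
\]
Two corrections, though. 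First, your concern that $\mathcal{R}$ contains a term like $\kappa\|\Be_{\Bu}\|_{\Ct_h}$ times a projection error is unfounded: after using the $L^2$-projection property $\BP_{\BM}$ to replace $(\boldsymbol{\Pi_V}\Bw)^t-\BP_{\BM}\Bw^t$ by $\boldsymbol{\Pi_V}\Bw-\Bw$ etc., the right-hand side of \eqref{error_eq5} consists \emph{only} of boundary pairings of projection errors against $\Be_{\Bu}^t-\Be_{\widehat{\Bu}^t}$ and $e_\sigma-e_{\widehat\sigma}$. No volume term in $\Be_{\Bu}$ survives, so taking the real part and applying Young gives \eqref{pre_error_analysis_1} directly, with no extra absorption argument needed.

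Second, and more importantly, your separate treatment of \eqref{pre_error_analysis_2} is a detour that need not be taken. The paper does not re-test \eqref{error_eq_a} or invoke inverse inequalities: it simply takes the \emph{imaginary part} of the same identity \eqref{error_eq5}. The imaginary part of the left side is $-\|\Be_{\Bw}\|^2_{\Ct_h}+\kappa^2\|\Be_{\Bu}\|^2_{\Ct_h}$, and the right side is bounded in modulus by $C\eta(\Bw,\Bu)$ times the quantities $\|\tau_t^{1/2}(\Be_{\Bu}^t-\Be_{\widehat{\Bu}^t})\|_{\partial\Ct_h}$ and $\|\tau_n^{1/2}(e_\sigma-e_{\widehat\sigma})\|_{\partial\Ct_h}$, which by the just-proved \eqref{pre_error_analysis_1} are themselves $\le C\eta(\Bw,\Bu)$. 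Hence $\|\Be_{\Bw}\|^2_{\Ct_h}\le\kappa^2\|\Be_{\Bu}\|^2_{\Ct_h}+C\eta(\Bw,\Bu)^2$, and \eqref{pre_error_analysis_2} follows. Your roundabout argument (``test with $\Be_{\Bw}$, split $\Be_{\widehat{\Bu}^t}$, undo integration by parts, also use \eqref{error_eq_b} tested against something proportional to $\Be_{\Bw}$'') is, if carried through, just a laborious re-derivation of \eqref{error_eq5}; the key observation you are missing is that the single combined identity already encodes both estimates in its real and imaginary parts.
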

\begin{proof}
Let $\Br_h = \Be_{\Bw}, \Bv_h = \Be_{\Bu}, \boldsymbol{\eta}_h = \Be_{\widehat{\Bu}^t}, q_h=e_{\sigma}, \xi_h = e_{\widehat{\sigma}}$ in the error equation (\ref{error_eq}). Then we get the following equalities after some simple manipulations which includes applying integration by parts:
\begin{subequations}
\label{error_eq1}
\begin{align}
\label{error_eq1_a}
&-(\Vi \Be_{\Bw},\Be_{\Bw})_{\Ct_h} - (\Be_{\Bw},\curl \Be_{\Bu})_{\Ct_h} + \langle \Be^t_{\Bw}\times \Bn, \Be^t_{\Bu}\rangle_{\partial \Ct_h} - \langle \Be^t_{\Bw}\times \Bn, \Be_{\widehat{\Bu}^t}\rangle_{\partial \Ct_h} = 0,\\
&(\Be_{\Bw},\curl \Be_{\Bu})_{\Ct_h} -  \langle \Be_{\widehat{\Bw}^t} \times \Bn, \Be^t_{\Bu}\rangle_{\partial \Ct_h} + (\Vi \kappa^2\Be_{\Bu},\Be_{\Bu})_{\Ct_h} - (e_{\sigma},\divv \Be_{\Bu})_{\Ct_h}+ \langle e_{\widehat{\sigma}},\Be^n_{\Bu} \cdot \Bn \rangle_{\partial \Ct_h}=0,\\
&(e_\sigma,\divv \Be_{\Bu})_{\Ct_h} - \langle e_\sigma, \Be^n_{\Bu} \cdot \Bn\rangle_{\partial \Ct_h} + \langle e_\sigma, \Be_{\widehat{\Bu}^n} \cdot \Bn\rangle_{\partial \Ct_h} = 0,\\
&  \langle \Be_{\widehat{\Bw}^t} \times \Bn, \Be_{\widehat{\Bu}^t}\rangle_{\partial \Ct_h \setminus \partial \Omega}=0,\\
\label{error_eq1_e}
&\langle \Be_{\widehat{\Bw}^t} \times \Bn, \Be_{\widehat{\Bu}^t}\rangle_{ \partial \Omega}+ \langle \kappa \Be_{\widehat{\Bu}^t} , \Be_{\widehat{\Bu}^t}\rangle_{ \partial \Omega}=0,\\
\label{error_eq1_f}
&-\langle e_{\widehat{\sigma}}, \Be_{\widehat{\Bu}^n} \cdot \Bn \rangle_{\partial \Ct_h}=0.
\end{align}
\end{subequations}
Adding the above equalities (\ref{error_eq1_a})-(\ref{error_eq1_f}) together yields
\begin{align}
-(\Vi \Be_{\Bw},\Be_{\Bw})_{\Ct_h}&+(\Vi \kappa^2\Be_{\Bu},\Be_{\Bu})_{\Ct_h} + \langle (\Be^t_{\Bw}-\Be_{\widehat{\Bw}^t})\times \Bn, \Be^t_{\Bu}-\Be_{\widehat{\Bu}^t}\rangle_{\partial \Ct_h}\nn \\
&+\langle \kappa \Be_{\widehat{\Bu}^t} , \Be_{\widehat{\Bu}^t}\rangle_{ \partial \Omega} - \langle e_\sigma- e_{\widehat{\sigma}},(\Be^n_{\Bu} - \Be_{\widehat{\Bu}^n}) \cdot \Bn \rangle_{\partial \Ct_h}=0. \label{error_eq2}
\end{align}
By the definition of $\widehat{\Bw}_h$ in (\ref{hat_definition}) we have
\begin{align}
(\Be^t_{\Bw}-\Be_{\widehat{\Bw}^t})\times \Bn & = (\boldsymbol{\Pi}_{\BV} \Bw-\Bw_h)^t \times \Bn - (\BP_{\BM} \Bw^t-\widehat{\Bw}^t_h)\times \Bn \nn  \\
& = (\boldsymbol{\Pi}_{\BV} \Bw)^t\times \Bn-\BP_{\BM} \Bw^t\times \Bn - \tau_t (\Bu^t_h-\widehat{\Bu}^t_h)\label{error_eq3} \\
& = (\boldsymbol{\Pi}_{\BV} \Bw)^t\times \Bn-\BP_{\BM} \Bw^t\times \Bn - \tau_t \big( (\boldsymbol{\Pi}_{\BU} \Bu  - \Be_{\Bu})^t  - (\BP_{\BM} \Bu^t- \Be_{\widehat{\Bu}^t} ) \big)\nn \\
& =  (\boldsymbol{\Pi}_{\BV} \Bw)^t\times \Bn-\BP_{\BM} \Bw^t\times \Bn +\tau_t(\Be_{\Bu}^t- \Be_{\widehat{\Bu}^t} )-\tau_t \big( (\boldsymbol{\Pi}_{\BU} \Bu)^t - \BP_{\BM} \Bu^t  \big).\nn
\end{align}
Moreover, by the definition of $\widehat{\Bu}^n_h$ in (\ref{hat_definition}), we have
\begin{align}
(\Be^n_{\Bu} - \Be_{\widehat{\Bu}^n}) \cdot \Bn & = ( \boldsymbol{\Pi}_{\BU} \Bu -\Bu_h  )\cdot\Bn - (\BP_{\BM} \Bu^n-\widehat{\Bu}^n_h)\cdot\Bn \nn\\
& = (  \boldsymbol{\Pi}_{\BU} \Bu- \BP_{\BM} \Bu^n )\cdot\Bn+\tau_n(\sigma_h-\widehat{\sigma}_h)\nn\\
& = (  \boldsymbol{\Pi}_{\BU} \Bu- \BP_{\BM} \Bu^n )\cdot\Bn+ \tau_n \big( ( \Pi_Q \sigma - e_\sigma ) -( P_M \sigma - e_{\widehat{\sigma}} )   \big)\nn\\
& = (  \boldsymbol{\Pi}_{\BU} \Bu- \BP_{\BM} \Bu^n )\cdot\Bn- \tau_n( e_\sigma- e_{\widehat{\sigma}})+\tau_n (  \Pi_Q \sigma - P_M \sigma).\label{error_eq4}
\end{align}
Inserting (\ref{error_eq3}) and (\ref{error_eq4}) into (\ref{error_eq2}), we obtain
\begin{align}
&-\Vi \| \Be_{\Bw} \|^2_{\Ct_h}+\Vi \kappa^2\|\Be_{\Bu}\|^2_{\Ct_h} +\kappa \| \Be_{\widehat{\Bu}^t} \|^2_{0, \partial \Omega } + \|  \tau_t^{\frac{1}{2}}(\Be_{\Bu}^t- \Be_{\widehat{\Bu}^t} )   \|^2_{\partial \Ct_h}  + \|\tau_n^{\frac{1}{2}}  ( e_\sigma- e_{\widehat{\sigma}} ) \|^2_{\partial \Ct_h} \nn\\
&= - \langle (\boldsymbol{\Pi}_{\BV} \Bw)^t\times \Bn-\BP_{\BM} \Bw^t\times \Bn , \Be_{\Bu}^t- \Be_{\widehat{\Bu}^t} \rangle_{\partial \Ct_h} + \langle \tau_t \big( (\boldsymbol{\Pi}_{\BU} \Bu)^t - \BP_{\BM} \Bu^t  \big), \Be_{\Bu}^t- \Be_{\widehat{\Bu}^t} \rangle_{\partial \Ct_h} \nn\\
&\quad+ \langle e_\sigma- e_{\widehat{\sigma}}, (  \boldsymbol{\Pi}_{\BU} \Bu- \BP_{\BM} \Bu^n )\cdot\Bn \rangle_{\partial\Ct_h} + \langle e_\sigma- e_{\widehat{\sigma}},\tau_n (  \Pi_Q \sigma - P_M \sigma) \rangle_{\partial\Ct_h} \nn\\
& = - \langle (\boldsymbol{\Pi}_{\BV} \Bw-\Bw)\times \Bn , \Be_{\Bu}^t- \Be_{\widehat{\Bu}^t} \rangle_{\partial \Ct_h}  + \langle \tau_t ( \boldsymbol{\Pi}_{\BU} \Bu- \Bu  ), \Be_{\Bu}^t- \Be_{\widehat{\Bu}^t} \rangle_{\partial \Ct_h} \nn\\
& \quad+  \langle e_\sigma- e_{\widehat{\sigma}}, (  \boldsymbol{\Pi}_{\BU} \Bu- \Bu )\cdot\Bn \rangle_{\partial\Ct_h} + \langle e_\sigma- e_{\widehat{\sigma}},\tau_n (  \Pi_Q \sigma -  \sigma) \rangle_{\partial\Ct_h},\label{error_eq5}
\end{align}
where the second equality is derived by the properties of $L^2$-projections $\BP_{\BM}$ and $P_M$. Based on (\ref{error_eq5}), taking the real part and imaginary part of the left-hand side of (\ref{error_eq5}) respectively, the estimates (\ref{pre_error_analysis_1}) and (\ref{pre_error_analysis_2}) can be obtained by the approximation properties of standard $L^2$-projections, the Young's inequality and the fact that $\sigma=0$. This completes the proof.
\end{proof}

Now we start to use the duality argument to get an estimate for $\Be_{\Bu}$. Given $\Be_{\Bu} \in \BL^2(\Omega)$, we introduce the first-order system of the dual problem (\ref{dual_problem}) with $\BJ = \Be_{\Bu}$:
\begin{subequations}
\label{dual_FOS_error}
\begin{align}
\Vi \boldsymbol{\Phi} - \curl \boldsymbol{\Psi} &= 0\qquad \rm{in}\ \Omega, \\
\curl \boldsymbol{\Phi} + \Vi\kappa^2 \boldsymbol{\Psi} + \nabla \varphi &=  \Be_{\Bu} \quad \ \rm{in}\ \Omega,\\
\label{dual_FOS_error_1}
\divv \boldsymbol{\Psi} &= 0\qquad \rm{in}\ \Omega,\\
\label{bc1_dual_error_FOS}
\boldsymbol{\Phi} \times \Bn -  \kappa \boldsymbol{\Psi}^t &= 0 \qquad \rm{on}\ \partial\Omega,\\
\varphi &= 0 \qquad \rm{on}\ \partial\Omega.
\end{align}
\end{subequations}
Similar to the estimates in (\ref{est_dual_3_p})-(\ref{est_dual_3_1}), we have 
\begin{align}
\label{est_dual_error_s}
&\|\varphi\|_{1,\Omega} \leq C \|\Be_{\Bu}\|_{\Ct_h},
\\
\label{est_dual_error_1_new}
&\| \boldsymbol{\Phi} \|_{\frac{1}{2}+\alpha,\Omega}  + \kappa \| \boldsymbol{\Psi} \|_{\frac{1}{2}+\alpha,\Omega} + \|\curl \boldsymbol{\Psi}\|_{\frac{1}{2}+\alpha,\Omega} + (1+\kappa)\kappa \| \boldsymbol{\Psi}\|_{0,\Omega}     \leq C (1+\kappa) \|\Be_{\Bu} \|_{\Ct_h}.
\end{align}

Next we first present an important equality.
\begin{lemma}
\label{lemma_equality_error_eu}
Let $(\boldsymbol{\Phi},\boldsymbol{\Psi},\varphi) $ be the solution of  the dual problem (\ref{dual_FOS_error}). It holds that
\begin{align}
\label{equality_error_eu}
\|  \Be_{\Bu} \|^2_{\Ct_h} = \sum^{5}_{k=1} E_k,
\end{align}
where 
\begin{align*}
E_1 &= \langle (\Be^t_{\Bu} -\Be_{\widehat{\Bu}^t} )\times \Bn, \boldsymbol{\Phi} - \boldsymbol{\Pi_V \Phi} \rangle_{\partial \Ct_h},\\
E_2 & = \langle (\Be^n_{\Bu} -\Be_{\widehat{\Bu}^n} )\cdot \Bn, \varphi - \Pi_Q \varphi\rangle_{\partial \Ct_h},\\
E_3&=\langle (\Be_{\widehat{\Bw}^t} - \Be_{\Bw}^t )\times \Bn,\boldsymbol{\Psi} - \boldsymbol{\Pi_U \Psi} \rangle_{\partial \Ct_h},\\
E_4 &= -\langle \Be_{\widehat{\Bw}^t} \times \Bn+ \kappa \Be_{\widehat{\Bu}^t} , \boldsymbol{\Psi}^t \rangle_{\partial \Omega} , \\
E_5 & = (e_\sigma, \divv (\boldsymbol{\Psi} - \boldsymbol{\Pi_U \Psi}  )  )_{\Ct_h} - \langle e_{\widehat{\sigma}}, (\boldsymbol{\Psi} - \boldsymbol{\Pi_U \Psi} )\cdot \Bn \rangle_{\partial \Ct_h}.
\end{align*}
\end{lemma}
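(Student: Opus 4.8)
The plan is to replay the duality computation of Lemma~\ref{lemma_equality_sta}, now driven by the dual problem (\ref{dual_FOS_error}) (that is, with $\BJ=\Be_{\Bu}$) in place of (\ref{dual_FOS}) and using the error equations (\ref{error_eq}) in place of the discrete scheme (\ref{discrete_mixed_form}). First I would pair the second equation of (\ref{dual_FOS_error}) with $\Be_{\Bu}$ and the first with $\Be_{\Bw}$ and add, obtaining
\[
\|\Be_{\Bu}\|^2_{\Ct_h}=(\Be_{\Bu},\curl\boldsymbol{\Phi}+\Vi\kappa^2\boldsymbol{\Psi}+\nabla\varphi)_{\Ct_h}+(\Be_{\Bw},\Vi\boldsymbol{\Phi}-\curl\boldsymbol{\Psi})_{\Ct_h},
\]
the second term being $0$ by the first equation of (\ref{dual_FOS_error}). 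I would then write $\boldsymbol{\Phi},\boldsymbol{\Psi},\varphi$ as $\boldsymbol{\Pi_V\Phi},\boldsymbol{\Pi_U\Psi},\Pi_Q\varphi$ plus remainders; the contributions $(\Be_{\Bu},\Vi\kappa^2(\boldsymbol{\Psi}-\boldsymbol{\Pi_U\Psi}))_{\Ct_h}$ and $(\Be_{\Bw},\Vi(\boldsymbol{\Phi}-\boldsymbol{\Pi_V\Phi}))_{\Ct_h}$ drop out because $\Be_{\Bu}\in\BU_h$ and $\Be_{\Bw}\in\BV_h$.

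Next, for the pieces carrying a projected test function I would insert $\Br_h=\boldsymbol{\Pi_V\Phi}$ into (\ref{error_eq_a}), $\Bv_h=\boldsymbol{\Pi_U\Psi}$ into (\ref{error_eq_b}) and $q_h=\Pi_Q\varphi$ into (\ref{error_eq_c}); this produces in particular the volume terms $(\Vi\Be_{\Bw},\boldsymbol{\Pi_V\Phi})_{\Ct_h}$ and $(\Vi\kappa^2\Be_{\Bu},\boldsymbol{\Pi_U\Psi})_{\Ct_h}$, which cancel $(\Be_{\Bw},\Vi\boldsymbol{\Pi_V\Phi})_{\Ct_h}$ and $(\Be_{\Bu},\Vi\kappa^2\boldsymbol{\Pi_U\Psi})_{\Ct_h}$, since the form is conjugate-linear in its second argument and moving $\Vi$ (resp.\ $\Vi\kappa^2$) across it flips the sign. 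For the pieces carrying a remainder I would integrate by parts on each element; the volume residuals $(\curl\Be_{\Bu},\boldsymbol{\Phi}-\boldsymbol{\Pi_V\Phi})_{\Ct_h}$, $(\divv\Be_{\Bu},\varphi-\Pi_Q\varphi)_{\Ct_h}$ and $(\curl\Be_{\Bw},\boldsymbol{\Psi}-\boldsymbol{\Pi_U\Psi})_{\Ct_h}$ vanish because $\curl$ and $\divv$ lower the polynomial degree and the $L^2$-projections are orthogonal to $\BV_h$, $Q_h$, $\BU_h$ respectively. Only face integrals over $\partial\Ct_h$ and $\partial\Omega$ survive.

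The decisive step is the face bookkeeping. For each flux variable I would add and subtract the exact dual trace, split $\partial\Ct_h$ into interior faces and $\partial\Omega$, and use that $\boldsymbol{\Phi},\boldsymbol{\Psi},\varphi$ are single-valued while $\Be_{\widehat{\Bu}^t}\times\Bn$, $\Be_{\widehat{\Bu}^n}\cdot\Bn$, $\Be_{\widehat{\Bw}^t}\times\Bn$ and $e_{\widehat{\sigma}}$ have no jump across interior faces (by (\ref{error_eq_d}), (\ref{error_eq_f}), the single-valuedness of the face spaces $\BM^t_h$ and $M_h$, and the regularity of $\Bu$), so that every interior-face term carrying the \emph{unprojected} dual solution cancels. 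On $\partial\Omega$ I would use $\boldsymbol{\Phi}\times\Bn=\kappa\boldsymbol{\Psi}^t$ and $\varphi=0$ (whence the $\varphi$ boundary term vanishes and the $\curl\boldsymbol{\Phi}$ boundary contribution becomes $-\langle\kappa\Be_{\widehat{\Bu}^t},\boldsymbol{\Psi}^t\rangle_{\partial\Omega}$), together with $\divv\boldsymbol{\Psi}=0$ and $\sigma\equiv0$ (whence $e_{\widehat{\sigma}}=0$ on $\partial\Omega$). Collecting, the $\boldsymbol{\Phi}$ face terms combine into $E_1$, the $\varphi$ ones into $E_2$, the $\boldsymbol{\Psi}$ ones into $E_3$, the $\sigma$ ones into $E_5$, and the two $\partial\Omega$ remnants into $E_4=-\langle\Be_{\widehat{\Bw}^t}\times\Bn+\kappa\Be_{\widehat{\Bu}^t},\boldsymbol{\Psi}^t\rangle_{\partial\Omega}$, which is the asserted identity.

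I expect this face bookkeeping to be the main obstacle: one must track the signs in $\langle\boldsymbol{a}\times\Bn,\boldsymbol{b}\rangle=-\langle\boldsymbol{a},\boldsymbol{b}\times\Bn\rangle$ and in the sesquilinear products, and pair each integration-by-parts face term with the matching face term from an error equation so that the \emph{difference} $\Be^{\bullet}-\Be_{\widehat{\bullet}}$, rather than the sum, emerges; all the facts required (conservativity of the hybrid traces, $\divv\boldsymbol{\Psi}=0$, $\varphi\in H^1_0(\Omega)$, $\sigma=0$) are already established above. For later use one may also note that testing (\ref{error_eq_e}) with $\boldsymbol{\eta}_h=\BP_{\BM}\boldsymbol{\Psi}^t$, together with the fact that $\Be_{\widehat{\Bw}^t}\times\Bn+\kappa\Be_{\widehat{\Bu}^t}$ is a piecewise polynomial of degree at most $p$ on $\Ce_h$, gives $E_4=0$, although that is not needed for the identity itself.
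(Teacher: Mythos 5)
Your proposal is correct and follows essentially the same route as the paper: the same duality identity $\|\Be_{\Bu}\|^2_{\Ct_h}=(\Be_{\Bu},\curl\boldsymbol{\Phi}+\Vi\kappa^2\boldsymbol{\Psi}+\nabla\varphi)_{\Ct_h}+(\Be_{\Bw},\Vi\boldsymbol{\Phi}-\curl\boldsymbol{\Psi})_{\Ct_h}$, the same split into projections plus remainders with the $L^2$-orthogonality cancellations, the same substitution of $\boldsymbol{\Pi_V\Phi}$, $\boldsymbol{\Pi_U\Psi}$, $\Pi_Q\varphi$ into the error equations, and the same face bookkeeping via single-valuedness of the hybrid traces, $\divv\boldsymbol{\Psi}=0$, $\varphi\in H^1_0(\Omega)$ and $\boldsymbol{\Phi}\times\Bn=\kappa\boldsymbol{\Psi}^t$ on $\partial\Omega$. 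Your closing observation that $E_4=0$ is likewise exactly how the paper uses this term in the subsequent estimate.
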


\begin{proof}
By the dual problem (\ref{dual_FOS_error}), we have
\begin{align}
\|  \Be_{\Bu} \|^2_{\Ct_h} & = (\Be_{\Bu} , \curl \boldsymbol{\Phi} + \Vi\kappa^2 \boldsymbol{\Psi} + \nabla \varphi)_{\Ct_h} + (\Be_{\Bw} , \Vi \boldsymbol{\Phi} - \curl \boldsymbol{\Psi})_{\Ct_h}\nn\\
& = (\Be_{\Bu} , \curl \boldsymbol{\Pi_V \Phi} + \Vi\kappa^2 \boldsymbol{\Pi_U \Psi} + \nabla \Pi_Q \varphi )_{\Ct_h}  + (\Be_{\Bw} ,\Vi \boldsymbol{\Pi_V \Phi} - \curl \boldsymbol{\Pi_U \Psi})_{\Ct_h}\nn\\
&\quad \ + (\Be_{\Bu} ,\curl (\boldsymbol{\Phi} -\boldsymbol{\Pi_V \Phi}   ) )_{\Ct_h} + (\Be_{\Bu} ,\Vi\kappa^2(  \boldsymbol{\Psi}  - \boldsymbol{\Pi_U\Psi} ))_{\Ct_h} + (\Be_{\Bu} , \nabla(\varphi - \Pi_Q \varphi))_{\Ct_h}\nn\\
&\quad \ + (\Be_{\Bw} ,\Vi(\boldsymbol{\Phi} -\boldsymbol{\Pi_V\Phi} ))_{\Ct_h} - (\Be_{\Bw} , \curl (\boldsymbol{\Psi}-\boldsymbol{\Pi_U\Psi}  ))_{\Ct_h}.\label{err_equ_pre_0}
\end{align}
By the definitions of $\boldsymbol \Pi_U$ and $\boldsymbol \Pi_V$, we have $(\Be_{\Bu},\Vi\kappa^2(  \boldsymbol{\Psi}  - \boldsymbol{\Pi_U\Psi} ))_{\Ct_h} =0$ and $(\Be_{\Bw},\Vi(\boldsymbol{\Phi} -\boldsymbol{\Pi_V\Phi} ))_{\Ct_h}=0$. Similar to the derivations of (\ref{sta_equ_pre_1})-(\ref{sta_equ_pre_3}), we have
\begin{align}
(\Be_{\Bu},\curl (\boldsymbol{\Phi} -\boldsymbol{\Pi_V \Phi}   ) )_{\Ct_h} &= \langle \Be_{\Bu}^t \times \Bn, \boldsymbol{\Phi} -\boldsymbol{\Pi_V \Phi} \rangle_{\partial \Ct_h},\label{err_equ_pre_1}
\\
 (\Be_{\Bu}, \nabla(\varphi - \Pi_Q \varphi))_{\Ct_h} &= \langle \Be_{\Bu}^n \cdot \Bn, \varphi-\Pi_Q \varphi\rangle_{\partial \Ct_h} ,\label{err_equ_pre_2}
\\
- (\Be_{\Bw}, \curl (\boldsymbol{\Psi}-\boldsymbol{\Pi_U\Psi}  ))_{\Ct_h} &= - \langle \Be^t_{\Bw} \times \Bn, \boldsymbol{\Psi}-\boldsymbol{\Pi_U\Psi}\rangle_{\partial \Ct_h}.\label{err_equ_pre_3}
\end{align}
Taking $\Br_h =  \boldsymbol{\Pi_V \Phi}$ in the equation (\ref{error_eq_a}), noting that $\Be_{\widehat{\Bu}^t}\times \Bn$ is continuous across each interior face and using the boundary condition (\ref{bc1_dual_error_FOS}), we obtain
\begin{align}
(\Be_{\Bu}, \curl \boldsymbol{\Pi_V \Phi} )_{\Ct_h} &= (\Vi \Be_{\Bw},\boldsymbol{\Pi_V \Phi} )_{\Ct_h} + \langle \Be_{\widehat{\Bu}^t}\times \Bn, \boldsymbol{\Pi_V \Phi}\rangle_{\partial \Ct_h} \nn\\
& =  (\Vi \Be_{\Bw},\boldsymbol{\Pi_V \Phi} )_{\Ct_h} + \langle \Be_{\widehat{\Bu}^t}\times \Bn, \boldsymbol{\Pi_V \Phi} -  \boldsymbol{ \Phi} \rangle_{\partial \Ct_h} - \langle \Be_{\widehat{\Bu}^t}, \boldsymbol{ \Phi}\times\Bn\rangle_{\partial \Omega}\nn\\
& =  (\Vi \Be_{\Bw},\boldsymbol{\Pi_V \Phi} )_{\Ct_h} + \langle\Be_{\widehat{\Bu}^t}\times \Bn, \boldsymbol{\Pi_V \Phi} -  \boldsymbol{ \Phi} \rangle_{\partial \Ct_h} - \langle \Be_{\widehat{\Bu}^t}, \kappa \boldsymbol{\Psi}^t \rangle_{\partial \Omega}.\label{err_equ_pre_4}
\end{align}
Note that $\Be_{\widehat{\Bu}^n}\cdot \Bn$ is continuous across each interior face and $\varphi \in H^1_0(\Omega)$. We let $q_h = \Pi_Q \varphi$ in (\ref{error_eq_c}) to obtain
\begin{align}
(\Be_{\Bu},\nabla \Pi_Q \varphi)_{\Ct_h} = \langle\Be_{\widehat{\Bu}^n} \cdot \Bn, \Pi_Q \varphi \rangle_{\partial \Ct_h}  = \langle \Be_{\widehat{\Bu}^n}\cdot \Bn, \Pi_Q \varphi - \varphi \rangle_{\partial \Ct_h}  .\label{err_equ_pre_5}
\end{align}
We further take $\Bv_h = \boldsymbol{\Pi_U \Psi} $ in (\ref{error_eq_b}) to get
\begin{align}
&-( \Be_{\Bw},\curl  \boldsymbol{\Pi_U\Psi})_{\Ct_h} \nn\\
&= - \langle \Be_{\widehat{\Bw}^t}\times \Bn,\boldsymbol{\Pi_U\Psi} \rangle_{\partial \Ct_h} + ( \Vi \kappa^2 \Be_{\Bu},\boldsymbol{\Pi_U\Psi} )_{\Ct_h} - ( e_\sigma,\divv \boldsymbol{\Pi_U\Psi})_{\Ct_h} + \langle e_{\widehat{\sigma}}, \boldsymbol{\Pi_U\Psi} \cdot \Bn \rangle_{\partial \Ct_h} ,\nn\\
&= - \langle \Be_{\widehat{\Bw}^t}\times \Bn,\boldsymbol{\Pi_U\Psi} - \boldsymbol{\Psi}\rangle_{\partial \Ct_h}  - \langle \Be_{\widehat{\Bw}^t}\times \Bn,\boldsymbol{\Psi}^t\rangle_{\partial \Omega}
+ ( \Vi \kappa^2 \Be_{\Bu},\boldsymbol{\Pi_U\Psi} )_{\Ct_h} \nn\\
&\quad - ( e_\sigma,\divv \boldsymbol{\Pi_U\Psi} - \divv \boldsymbol{\Psi} )_{\Ct_h} + \langle e_{\widehat{\sigma}}, \boldsymbol{\Pi_U\Psi} \cdot \Bn -\boldsymbol{\Psi} \cdot \Bn  \rangle_{\partial \Ct_h} ,\label{err_equ_pre_6}
\end{align}
where the above second equality holds due to the fact that $\divv \boldsymbol{\Psi}=0$, $ \Be_{\widehat{\Bw}^t}\times \Bn$ and $e_{\widehat{\sigma}}$ are continuous across each interior face, and $e_{\widehat{\sigma}}=0$ on $\Ce^{\partial}_h$. Then, inserting (\ref{err_equ_pre_1})-(\ref{err_equ_pre_6}) into (\ref{err_equ_pre_0}) yields the result.
\end{proof}

Based on the above lemma, we can obtain the estimate for $\|\Be_{\Bu}\|_{\Ct_h}$.

\begin{lemma}\label{eu_pre}
If the regularity property (\ref{est_dual_error_s}) holds, and $\tau_t,\tau_n$ are chosen as in Lemma \ref{error_analysis_1},   we have
\begin{align}
\label{est_eu_result}
\|  \Be_{\Bu}  \|_{\Ct_h} \leq  C \big( R_{\Bw}  \| \Bw \|_{\frac{1}{2}+\alpha,\Omega}+  R_{\Bu} \| \Bu \|_{\frac{1}{2}+\alpha,\Omega}\big),
\end{align}
where $R_{\Bw}$ and $R_{\Bu}$ are defined as in Theorem \ref{error_thm}.
\end{lemma}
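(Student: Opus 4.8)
The plan is to argue exactly as in the proof of Theorem~\ref{stability_thm}, but now starting from the representation $\|\Be_{\Bu}\|^2_{\Ct_h}=\sum_{k=1}^{5}E_k$ furnished by Lemma~\ref{lemma_equality_error_eu}. For each $k$ I would derive a bound of the form $|E_k|\le C\big(R_{\Bw}\|\Bw\|_{\frac{1}{2}+\alpha,\Omega}+R_{\Bu}\|\Bu\|_{\frac{1}{2}+\alpha,\Omega}\big)\|\Be_{\Bu}\|_{\Ct_h}$, sum over $k$, and then divide by $\|\Be_{\Bu}\|_{\Ct_h}$. Three ingredients feed every estimate: (i) the three ``projected-error'' bounds of Lemma~\ref{error_analysis_1}, which, with $\tau_t=p/h$, $\tau_n=(1+\kappa)h/p$ and $s=t=\frac{1}{2}+\alpha$, control $\|\kappa^{\frac{1}{2}}\Be_{\widehat{\Bu}^t}\|_{0,\partial\Omega}$, $\|\tau_t^{\frac{1}{2}}(\Be_{\Bu}^t-\Be_{\widehat{\Bu}^t})\|_{\partial\Ct_h}$ and $\|\tau_n^{\frac{1}{2}}(e_\sigma-e_{\widehat{\sigma}})\|_{\partial\Ct_h}$ by $C\eta(\Bw,\Bu)$; (ii) the $hp$-approximation estimates (\ref{es_pj_1})--(\ref{es_pj_9}) for the $L^2$-projections of $\boldsymbol{\Phi}$, $\boldsymbol{\Psi}$, $\varphi$; and (iii) the dual regularity (\ref{est_dual_error_s})--(\ref{est_dual_error_1_new}), which convert $\|\boldsymbol{\Phi}\|_{\frac{1}{2}+\alpha,\Omega}$, $\|\boldsymbol{\Psi}\|_{\frac{1}{2}+\alpha,\Omega}$, $\|\boldsymbol{\Psi}\|_{0,\Omega}$, $\|\varphi\|_{1,\Omega}$ into $(1+\kappa)$, $(1+\kappa)/\kappa$, $1/\kappa$, $1$ times $\|\Be_{\Bu}\|_{\Ct_h}$ respectively.

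Term by term, I would: bound $E_1$ by Cauchy--Schwarz together with $\tau_t^{-\frac{1}{2}}=(h/p)^{\frac{1}{2}}$ and (\ref{es_pj_4}); rewrite $E_2$ using identity (\ref{error_eq4}) (the piece $\tau_n(\Pi_Q\sigma-P_M\sigma)$ vanishes since $\sigma=0$), splitting it into a pure-projection part $\langle(\boldsymbol{\Pi_U}\Bu-\BP_{\BM}\Bu^n)\cdot\Bn,\varphi-\Pi_Q\varphi\rangle_{\partial\Ct_h}$, handled by (\ref{es_pj_5}), (\ref{es_pj_8}) and $\|\varphi-\Pi_Q\varphi\|_{\partial\Ct_h}\le C(h/p)^{\frac{1}{2}}\|\varphi\|_{1,\Omega}$, and a $\tau_n(e_\sigma-e_{\widehat{\sigma}})$ part handled by Lemma~\ref{error_analysis_1}; turn $E_5$, via integration by parts and the orthogonality $(\nabla e_\sigma,\boldsymbol{\Psi}-\boldsymbol{\Pi_U}\boldsymbol{\Psi})_{\Ct_h}=0$, into $\langle e_\sigma-e_{\widehat{\sigma}},(\boldsymbol{\Psi}-\boldsymbol{\Pi_U}\boldsymbol{\Psi})\cdot\Bn\rangle_{\partial\Ct_h}$ and estimate it like the second part of $E_2$; expand $E_3$ by identity (\ref{error_eq3}) into a $\Bw$-projection part $\langle(\boldsymbol{\Pi_V}\Bw-\BP_{\BM}\Bw)\times\Bn,\boldsymbol{\Psi}-\boldsymbol{\Pi_U}\boldsymbol{\Psi}\rangle_{\partial\Ct_h}$, a $\tau_t(\Be_{\Bu}^t-\Be_{\widehat{\Bu}^t})$ part, and a $\tau_t((\boldsymbol{\Pi_U}\Bu)^t-\BP_{\BM}\Bu^t)$ part, each closed by Cauchy--Schwarz and the relevant projection estimates. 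The term $E_4$ is the only one requiring an extra discrete argument: testing the error equation (\ref{error_eq_e}) with $\boldsymbol{\eta}_h=\BP_{\BM}\boldsymbol{\Psi}^t$ gives $E_4=-\langle\Be_{\widehat{\Bw}^t}\times\Bn+\kappa\Be_{\widehat{\Bu}^t},\boldsymbol{\Psi}^t-\BP_{\BM}\boldsymbol{\Psi}^t\rangle_{\partial\Omega}$, while testing (\ref{error_eq_e}) with $\boldsymbol{\eta}_h=\Be_{\widehat{\Bw}^t}\times\Bn$ gives $\|\Be_{\widehat{\Bw}^t}\times\Bn\|_{0,\partial\Omega}\le\kappa\|\Be_{\widehat{\Bu}^t}\|_{0,\partial\Omega}$; combining these with the $\|\kappa^{\frac{1}{2}}\Be_{\widehat{\Bu}^t}\|_{0,\partial\Omega}$ bound and the trace-projection estimate for $\boldsymbol{\Psi}^t$ closes $E_4$.

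The last step is to sum the five pieces. Each one is $C$ times $\eta(\Bw,\Bu)$ (or a raw $L^2$-projection error) times some monomial in $\kappa$ and $h/p$ produced by $\tau_t$, $\tau_n$ and the dual bounds, times $\|\Be_{\Bu}\|_{\Ct_h}$; inserting $\eta(\Bw,\Bu)=(h/p)^{\frac{1}{2}+\alpha}\|\Bw\|_{\frac{1}{2}+\alpha,\Omega}+(1+(1+\kappa)^{-\frac{1}{2}})(h/p)^{\alpha-\frac{1}{2}}\|\Bu\|_{\frac{1}{2}+\alpha,\Omega}$ produces the monomials present in $R_{\Bw}$ and $R_{\Bu}$. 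The main obstacle is precisely this power bookkeeping: several intermediate bounds are \emph{not} of the form $R_{\Bw},R_{\Bu}$ term by term --- for instance the $\Bw$-projection part of $E_3$ yields $\tfrac{1+\kappa}{\kappa}(h/p)^{2\alpha}$, parts of $E_4$ yield $\kappa^{-\frac{1}{2}}(1+\kappa)(h/p)^{2\alpha+\frac{1}{2}}$, and parts of $E_5$ yield $\tfrac{(1+\kappa)^{1/2}}{\kappa}(h/p)^{2\alpha-1}$ --- and it is only the regime hypothesis $\kappa h/p\ge C_0$, which lets one replace $1/\kappa$ by $C h/p$ and $(h/p)^{-1/2}$ by $C\kappa^{1/2}$, that absorbs all of them into $(1+\kappa)(h/p)^{2\alpha+1}$, $(1+\kappa)^{1/2}(h/p)^{\alpha+\frac{3}{2}}$, $(1+(1+\kappa)^{1/2})(h/p)^{\frac{1}{2}+\alpha}$, and $(1+\kappa+(1+\kappa)^{1/2})(h/p)^{2\alpha}$. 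Performing these reductions for every piece and summing gives $\|\Be_{\Bu}\|^2_{\Ct_h}\le C(R_{\Bw}\|\Bw\|_{\frac{1}{2}+\alpha,\Omega}+R_{\Bu}\|\Bu\|_{\frac{1}{2}+\alpha,\Omega})\|\Be_{\Bu}\|_{\Ct_h}$, and dividing by $\|\Be_{\Bu}\|_{\Ct_h}$ yields (\ref{est_eu_result}).
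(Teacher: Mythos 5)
Your proposal follows essentially the same route as the paper's proof: the same decomposition into $E_1,\dots,E_5$ from Lemma \ref{lemma_equality_error_eu}, the same use of Lemma \ref{error_analysis_1} with $s=t=\tfrac12+\alpha$, the identities (\ref{error_eq3})--(\ref{error_eq4}), the dual regularity (\ref{est_dual_error_s})--(\ref{est_dual_error_1_new}), and the regime $\kappa h/p\ge C_0$ to absorb the stray powers into $R_{\Bw}$ and $R_{\Bu}$. The only divergence is $E_4$: since $\Be_{\widehat{\Bw}^t}\times\Bn+\kappa\Be_{\widehat{\Bu}^t}$ is itself a piecewise polynomial of degree $\le p$ on $\partial\Omega$, testing (\ref{error_eq_e}) with $\BP_{\BM}\boldsymbol{\Psi}^t$ gives $E_4=0$ exactly, whereas you bound the residual pairing against $\boldsymbol{\Psi}^t-\BP_{\BM}\boldsymbol{\Psi}^t$ --- which is correct and gets absorbed under $\kappa h/p\ge C_0$, but is unnecessary.
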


\begin{proof}
We need to derive the upper bounds for $E_1,\cdots,E_5$ in Lemma \ref{lemma_equality_error_eu}. By the Cauchy-Schwarz inequality and the approximation property of  $\boldsymbol{\Pi_V}$, we obtain
\begin{align*}
E_1 \leq C \| \tau_t^{\frac{1}{2}}(\Be^t_{\Bu} -\Be_{\widehat{\Bu}^t} ) \|_{\partial \Ct_h} \tau_t^{-\frac{1}{2}} (\frac{h}{p})^{\alpha} \| \boldsymbol{\Phi} \|_{\frac{1}{2}+\alpha,\Omega} \leq C\| \tau_t^{\frac{1}{2}}(\Be^t_{\Bu} -\Be_{\widehat{\Bu}^t})  \|_{\partial \Ct_h} (1+\kappa)(\frac{h}{p} )^{\frac{1}{2}+\alpha}\| \Be_{\Bu} \|_{\Ct_h}.
\end{align*}
By the identity (\ref{error_eq4}) for $ (\Be^n_{\Bu} -\Be_{\widehat{\Bu}^n} )\cdot \Bn$ and the fact that $\sigma=0$, we can derive that
\begin{align*}
E_2& = ((  \boldsymbol{\Pi}_{\BU} \Bu- \Bu+\Bu^n-\BP_{\BM} \Bu^n )\cdot\Bn- \tau_n( e_\sigma- e_{\widehat{\sigma}})+\tau_n (  \Pi_Q \sigma  - P_M \sigma) , \varphi - \Pi_Q \varphi\rangle_{\partial \Ct_h},\\
&\leq C\big( (\frac{h}{p})^{\alpha} \| \Bu \|_{\frac{1}{2}+\alpha,\Omega} 
+\tau_n^{\frac{1}{2}} \| \tau_n^{\frac{1}{2}} (e_\sigma -e_{\widehat{\sigma}})\|_{\partial \Ct_h} \big) (\frac{h}{p})^{\frac{1}{2}} \| \varphi \|_{1,\Omega}\\
&\leq C\big((\frac{h}{p})^{\frac{1}{2}+\alpha} \| \Bu \|_{\frac{1}{2}+\alpha,\Omega} + \frac{ (1+\kappa)^{\frac{1}{2}}h}{p} \| \tau_n^{\frac{1}{2}} (e_\sigma -e_{\widehat{\sigma}})\|_{\partial \Ct_h} \big) \| \Be_{\Bu} \|_{\Ct_h}
\end{align*}
Moreover, by the identity (\ref{error_eq3}) for $(\Be^t_{\Bw}-\Be_{\widehat{\Bw}^t})\times \Bn $ and the triangular inequality, we get
\begin{align*}
E_3&=-\langle \tau_t ( \Be^t_{\Bu} -\Be_{\widehat{\Bu}^t} ),\boldsymbol{\Psi} - \boldsymbol{\Pi_U \Psi} \rangle_{\partial \Ct_h}+\langle  \BP_{\BM}\Bw^t  \times \Bn-(\boldsymbol{\Pi_V \Bw} )^t \times \Bn,\boldsymbol{\Psi} - \boldsymbol{\Pi_U \Psi} \rangle_{\partial \Ct_h}\\
&\quad +\tau_t \langle  ( \boldsymbol{\Pi_U \Bu} )^t - \BP_{\BM}\Bu^t ,\boldsymbol{\Psi} - \boldsymbol{\Pi_U \Psi} \rangle_{\partial \Ct_h} \\
&\leq C \Big( \frac{(1+\kappa)}{\kappa}(\frac{h}{p})^{\alpha-\frac{1}{2}} \| \tau_t^{\frac{1}{2}}(\Be^t_{\Bu} -\Be_{\widehat{\Bu}^t}  )\|_{\partial \Ct_h} + \frac{(1+\kappa)}{\kappa}(\frac{h}{p})^{2\alpha} \| \Bw \|_{\frac{1}{2}+\alpha,\Omega}\\
&\quad +\frac{(1+\kappa)}{\kappa}(\frac{h}{p})^{2\alpha-1} \|\Bu\|_{\frac{1}{2}+\alpha,\Omega}  \Big) \| \Be_{\Bu} \|_{\Ct_h}.
\end{align*}
By the boundary condition (\ref{error_eq_e}), we have $E_4 =-\langle \Be_{\widehat{\Bw}^t} \times \Bn+ \kappa \Be_{\widehat{\Bu}^t} , \boldsymbol{P_M}\boldsymbol{\Psi}^t \rangle_{\partial \Omega}=0 $. Applying integration by parts,  we obtain the estimate for $E_5$ as follows:
\begin{align*}
E_5 & = ( \nabla e_\sigma, \boldsymbol{ \Psi-\Pi_U \Psi }  )+\langle e_\sigma-e_{\widehat{\sigma}}, (  \boldsymbol{ \Psi-\Pi_U \Psi }  )\cdot \Bn\rangle_{\partial \Ct_h}  \\
&= \langle e_\sigma-e_{\widehat{\sigma}}, (  \boldsymbol{ \Psi-\Pi_U \Psi }  )\cdot \Bn\rangle_{\partial \Ct_h}  \leq C  \frac{(1+\kappa)^{\frac{1}{2}}}{\kappa}(\frac{h}{p})^{\alpha-\frac{1}{2}}\| \tau_n^{\frac{1}{2}} (e_\sigma -e_{\widehat{\sigma}})\|_{\partial \Ct_h} \| \Be_{\Bu} \|_{\Ct_h}.
\end{align*}
Finally, combining the above estimates for $E_1,\cdots,E_5$ and the estimate (\ref{pre_error_analysis_1}), we can conclude the result.
\end{proof}

We can now give the proof of Theorem \ref{error_thm}.
\begin{proof} (Proof of Theorem \ref{error_thm}) By the triangular inequality, we have
\[
\|\Bu-\Bu_h\|_{\Ct_h} \leq \| \Bu - \boldsymbol{\Pi_U u} \|_{\Ct_h} + \|\Be_{\Bu}\|_{\Ct_h}.
\]
The error estimate (\ref{error_ut_h1}) can be obtained by the approximation property of $\boldsymbol{\Pi_U}$ and the estimate (\ref{est_eu_result}) for $\|\Be_{\Bu}\|_{\Ct_h}$. Similarly, (\ref{error_wt_h1}) can be obtained by the triangular inequality, the approximation property of $\boldsymbol{\Pi_V}$, (\ref{pre_error_analysis_2}) and (\ref{est_eu_result}). 
\end{proof}

\begin{remark}
Besides we get the error estimate for $\|\tau_n^{\frac{1}{2}}  ( e_\sigma- e_{\widehat{\sigma}} ) \|_{\partial \Ct_h}$ in (\ref{pre_error_analysis_1}), we can also obtain the error estimate for $\|\nabla e_\sigma\|_{\Ct_h}$. Actually, this can be similarly derived as the stability estimate for $\|\nabla \sigma_h\|_{\Ct_h}$ (cf. Remark \ref{remark_sta_1}) by taking $\Bv_h = \nabla e_\sigma$ in the error equation (\ref{error_eq_b}). Then the error estimate for $\| \nabla(\sigma-\sigma_h) \|_{\Ct_h}$ can be further deduced by the triangular inequality. When $\kappa h/p \leq C_0$, one may tune the parameters $\tau_t$ and $\tau_n$ (cf. Remark \ref{remark_sta_1}) and get the error estimates.
\end{remark}

\section{Stability and error estimates for ideal case}\label{ideal_case}
In this section, we consider the stability estimates and error estimates of the HDG method (\ref{discrete_mixed_form}) under some ideal assumptions of the problem (\ref{pde_original}) and the dual problem (\ref{dual_problem}). We assume that when $\Omega$ is a smooth star-shaped domain, the solutions of the first-order system (\ref{pde_mixed_first_order})  satisfy that $\Bu \in \BH^2(\Omega)$ and $\Bw \in \BH^1(\Omega)$. When $\Bf $ is divergence-free and $\Bg\in
\BH_T^{\frac{1}{2}}(\partial\Omega):=\{\Bg\in
[H^{\frac{1}{2}}(\partial\Omega)]^3,\ \Bg\cdot\boldsymbol{\Bn}=0\
{\textrm {on}}\ \partial \Omega\}$, we assume the following estimate holds, which has been mentioned in \cite{FW2014} that 
\[
\| \Bu \|_{2,\Omega} + \|\Bw\|_{1,\Omega} \leq C ( 1 + \kappa ) \BM(\Bf,\Bg) + C\|\Bg\|_{\frac{1}{2},\partial \Omega},
\] 
where $\BM(\Bf,\Bg) = \|\Bf\|_{0,\Omega} +\|\Bg\|_{0,\partial \Omega} $. In this ideal case, we can also assume the solution of the dual problem (\ref{dual_problem}) satisfies that $\boldsymbol{\Psi} \in \BH^2(\Omega)$, the estimate (\ref{est_dual_1}) holds with $\alpha=\frac{1}{2}$ and there also holds
\begin{align}
\label{est_dual_2}
\|\boldsymbol{\Psi}\|_{2,\Omega} \leq C (1+\kappa) \| \BJ-\nabla \varphi \|_{0,\Omega} \leq C( 1+\kappa) \| \BJ\|_{0,\Omega}.
\end{align}
We assume the approximation results of $L^2$-projections in (\ref{es_pj_1})-(\ref{es_pj_1}) still hold, then we have the following stability estimates and error estimates for the HDG method (\ref{discrete_mixed_form}).
\begin{lemma}
\label{stability_thm0} 
We assume that (\ref{est_dual_1}) holds with $\alpha = \frac{1}{2} $ and (\ref{est_dual_2}) also holds true. Let $(\Bw_h,\Bu_h,\widehat{\Bu}^t_h,\sigma_h, \widehat{\sigma}_h)$ be the solution of the problem (\ref{discrete_mixed_form}). We have
\begin{align}
\label{stability_u_h0}
&\| \Bu_h \|_{\Ct_h} \leq C\Big(  \widetilde{C}^2_{\rm stab}\| 
\frac{\Bf}{\kappa} \|_{0,\Omega}   + \widetilde{C}_{\rm stab}
\|\frac{\Bg}{\kappa}\|_{0,\partial \Omega}   \Big) ,\\
\label{stability_w_h0}
&\| \Bw_h \|_{\Ct_h} \leq C\Big(  (\frac{1}{\kappa}+\widetilde{C}^2_{\rm stab}) \| \Bf \|_{0,\Omega}   +(\frac{1}{\kappa^{\frac{1}{2}}}+\widetilde{C}_{\rm stab})  \|\Bg\|_{0,\partial \Omega}   \Big), \\
\label{stability_ut_h0}
&\| \widehat{ \Bu}^t_h \|_{\partial \Ct_h} \leq C\big( (\frac{\kappa h}{p})^{\frac{1}{2}}
+ ph^{-\frac{1}{2}} \big)\Big( \widetilde{C}^2_{\rm stab}\| 
\frac{\Bf}{\kappa} \|_{0,\Omega}   + \widetilde{C}_{\rm stab}
\|\frac{\Bg}{\kappa}\|_{0,\partial \Omega}   \Big),
\end{align}
where $\widetilde{C}_{\rm stab} := 1 + \frac{(1+\kappa)\kappa^{\frac{1}{2}} h}{p}+\frac{(1+\kappa)^{\frac{1}{2}}\kappa^{\frac{1}{2}} h}{p}$.
\end{lemma}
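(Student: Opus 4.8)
The plan is to closely follow the proof of Theorem~\ref{stability_thm}, the only new ingredient being the $\BH^2$-regularity (\ref{est_dual_2}) of the dual variable $\boldsymbol{\Psi}$, which lets us invoke the $L^2$-projection trace bounds (\ref{es_pj_4})--(\ref{es_pj_6}) at the higher Sobolev exponents $t=1$, $s=2$, $\beta=1$ and thereby replace $C_{\rm stab}$ by the sharper $\widetilde{C}_{\rm stab}$. First I would introduce the first-order dual system (\ref{dual_FOS}) with $\BJ=\Bu_h$. Since (\ref{est_dual_1}) is assumed with $\alpha=\tfrac12$ and (\ref{est_dual_2}) holds, the auxiliary bounds (\ref{est_dual_3_p}) and (\ref{est_dual_4}) carry over unchanged, and in addition one has $\|\boldsymbol{\Phi}\|_{1,\Omega}+\|\boldsymbol{\Psi}\|_{2,\Omega}+\kappa(1+\kappa)\|\boldsymbol{\Psi}\|_{0,\Omega}\le C(1+\kappa)\|\Bu_h\|_{\Ct_h}$. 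Lemma~\ref{stability_lemma} and the key identity $\|\Bu_h\|^2_{\Ct_h}=\sum_{k=1}^6 T_k$ of Lemma~\ref{lemma_equality_sta} are purely algebraic and need no modification.

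The bulk of the work is re-estimating $T_1,\dots,T_6$ with $\tau_t=p/h$ and $\tau_n=(1+\kappa)h/p$. For $T_1$ I would use (\ref{es_pj_4}) with $t=1$ and $\|\boldsymbol{\Phi}\|_{1,\Omega}\le C(1+\kappa)\|\Bu_h\|_{\Ct_h}$; for $T_3$, after writing $\tau_t=\tau_t^{1/2}\cdot\tau_t^{1/2}$, use (\ref{es_pj_5}) with $s=2$ together with $\|\boldsymbol{\Psi}\|_{2,\Omega}\le C(1+\kappa)\|\Bu_h\|_{\Ct_h}$ -- this is where the $\BH^2$-regularity pays off, turning the problematic factor $\tau_t^{1/2}(h/p)^{\alpha}$ of the general case into $\tau_t^{1/2}(h/p)^{3/2}=h/p$. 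Both $T_1$ and $T_3$ then come out bounded by $C(1+\kappa)(h/p)\,\|\tau_t^{1/2}(\Bu^t_h-\widehat{\Bu}^t_h)\|_{\partial\Ct_h}\|\Bu_h\|_{\Ct_h}$. For $T_2$ I would use (\ref{es_pj_6}) with $\beta=1$ and (\ref{est_dual_3_p}), and for $T_6$, integrate by parts, use the orthogonality $(\nabla\sigma_h,\boldsymbol{\Psi}-\boldsymbol{\Pi_U\Psi})_{\Ct_h}=0$, and then (\ref{es_pj_5}) with $s=2$; both yield $C(1+\kappa)^{1/2}(h/p)\,\|\tau_n^{1/2}(\sigma_h-\widehat{\sigma}_h)\|_{\partial\Ct_h}\|\Bu_h\|_{\Ct_h}$. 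The terms $T_4$ and $T_5$ are handled exactly as in Theorem~\ref{stability_thm}, via (\ref{est_dual_4}) and $\|\boldsymbol{\Psi}\|_{0,\Omega}\le C\kappa^{-1}\|\Bu_h\|_{\Ct_h}$, and give $C\kappa^{-1}(\|\Bf\|_{0,\Omega}+\|\Bg\|_{0,\partial\Omega})\|\Bu_h\|_{\Ct_h}$.

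Summing these bounds and dividing by $\|\Bu_h\|_{\Ct_h}$, with $a:=C(1+\kappa)(h/p)$ and $b:=C(1+\kappa)^{1/2}(h/p)$, I obtain $\|\Bu_h\|_{\Ct_h}\le C\kappa^{-1}(\|\Bf\|_{0,\Omega}+\|\Bg\|_{0,\partial\Omega})+(a+b)\big(\|\tau_t^{1/2}(\Bu^t_h-\widehat{\Bu}^t_h)\|_{\partial\Ct_h}+\|\tau_n^{1/2}(\sigma_h-\widehat{\sigma}_h)\|_{\partial\Ct_h}\big)$. Then I would bound the last two norms by $(\|\Bf\|_{0,\Omega}\|\Bu_h\|_{\Ct_h}+\kappa^{-1}\|\Bg\|^2_{0,\partial\Omega})^{1/2}$ using (\ref{estimate_mixed_1}), split the square root, and apply Young's inequality to $(a+b)\|\Bf\|^{1/2}_{0,\Omega}\|\Bu_h\|^{1/2}_{\Ct_h}$, absorbing $\tfrac12\|\Bu_h\|_{\Ct_h}$ on the left; since $\kappa^{1/2}(a+b)\le C\big(\tfrac{(1+\kappa)\kappa^{1/2}h}{p}+\tfrac{(1+\kappa)^{1/2}\kappa^{1/2}h}{p}\big)$, the elementary inequalities $1+\kappa(a+b)^2\le C\widetilde{C}^2_{\rm stab}$ and $1+\kappa^{1/2}(a+b)\le C\widetilde{C}_{\rm stab}$ deliver (\ref{stability_u_h0}). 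Note that $a+b=O(h/p)$ is already small, so, unlike in Theorem~\ref{stability_thm}, no regime restriction $\kappa h/p\ge C_0$ is needed. Finally, (\ref{stability_w_h0}) follows by feeding (\ref{stability_u_h0}) into (\ref{estimate_mixed_2}) exactly as in Theorem~\ref{stability_thm}, and (\ref{stability_ut_h0}) follows from the inverse inequality $\|\Bu^t_h\|_{\partial\Ct_h}\le Cph^{-1/2}\|\Bu_h\|_{\Ct_h}$ together with (\ref{estimate_mixed_1}), (\ref{stability_u_h0}) and the triangle inequality. The only genuinely error-prone part is the bookkeeping of the powers of $\kappa$ and $h/p$ through the final combination, so that (up to the generic constant $C$) they assemble into $\widetilde{C}_{\rm stab}$; everything else is a transcription of the proof of Theorem~\ref{stability_thm} with $\alpha=\tfrac12$ and $\boldsymbol{\Psi}$'s regularity upgraded from $\BH^{\frac{1}{2}+\alpha}$ to $\BH^2$.
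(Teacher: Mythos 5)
Your proposal is correct and follows essentially the same route as the paper: it reuses Lemma \ref{stability_lemma} and the identity of Lemma \ref{lemma_equality_sta}, re-estimates $T_1,\dots,T_6$ with the upgraded dual regularity $\|\boldsymbol{\Phi}\|_{1,\Omega}+\|\boldsymbol{\Psi}\|_{2,\Omega}\le C(1+\kappa)\|\Bu_h\|_{\Ct_h}$ so that the $\tau_t$- and $\tau_n$-coefficients become $(1+\kappa)h/p$ and $(1+\kappa)^{1/2}h/p$, and then concludes via (\ref{estimate_mixed_1}) and Young's inequality, exactly as in the paper's proof. The bookkeeping $1+\kappa^{1/2}(a+b)=\widetilde{C}_{\rm stab}$ and the observation that no mesh restriction is needed are both consistent with the paper.
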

\begin{proof}
In order to get the upper bound for $\| \Bu_h \|_{\Ct_h}$, indeed, it only needs to bound the terms $T_1,\cdots,T_6$ as in the proof of Theorem \ref{stability_thm}. When (\ref{est_dual_1}) holds with $\alpha = \frac{1}{2} $ and (\ref{est_dual_2}) also holds, we have the following regularity estimate for the dual problem (\ref{dual_FOS}),
\begin{align}
\| \boldsymbol{\Phi}\|_{1,\Omega}  + \| \boldsymbol{\Psi} \|_{2,\Omega} + \kappa \| \boldsymbol{\Psi} \|_{1,\Omega} + \|\curl \boldsymbol{\Psi}\|_{1,\Omega} + \kappa(1+\kappa) \| \boldsymbol{\Psi}\|_{0,\Omega} \leq C(1+\kappa) \|\Bu_h\|_{\Ct_h}.\nn
\end{align}
By the above regularity estimate, we have
\begin{align*}
T_1& \leq  C \| \tau_t^{\frac{1}{2}} (\Bu^t_h- \widehat{\Bu}^t_h)  \|_{\partial \Ct_h} \tau_t^{-\frac{1}{2}} (\frac{h}{p})^{\frac{1}{2}} (1+\kappa) \|\Bu_h\|_{\Ct_h},\\
T_3 &\leq  C \| \tau_t^{\frac{1}{2}} (\Bu^t_h- \widehat{\Bu}^t_h)  \|_{\partial \Ct_h} \tau_t^{\frac{1}{2}} (\frac{h}{p})^{\frac{3}{2}} (1+\kappa) \|\Bu_h\|_{\Ct_h},\\
T_6 & \leq C \| \tau^{\frac{1}{2}}_n (\sigma_h - \widehat{\sigma}_h) \|_{\partial \Ct_h} \tau^{-\frac{1}{2}}_n  (\frac{h}{p})^{\frac{3}{2}} (1+\kappa)  \|\Bu_h\|_{\Ct_h}
\end{align*}
and the estimates for $T_2,T_4,T_5$ are the same as the estimates in the proof of Theorem \ref{stability_thm}. Combining the estimates for $T_1,\cdots, T_6$ again, we obtain
\begin{align}
\|\Bu_h\|^2_{\Ct_h} & \leq C \kappa^{-1}\left(\| \Bf  \|_{0,\Omega}+  \|\Bg\|_{0,\partial \Omega} \right)\|\Bu_h\|_{\Ct_h} \nn\\
&\quad + C
\big(\tau_t^{-\frac{1}{2}} (\frac{h}{p})^{\frac{1}{2}} +  \tau_t^{\frac{1}{2}} (\frac{h}{p})^{\frac{3}{2}}  \big)(1+\kappa) \| \tau_t^{\frac{1}{2}} (\Bu^t_h- \widehat{\Bu}^t_h)  \|_{\partial \Ct_h} \|\Bu_h\|_{\Ct_h} \nn\\
& \quad + C \big( \tau_n^{\frac{1}{2}} (\frac{h}{p})^{\frac{1}{2}}+\tau^{-\frac{1}{2}}_n  (\frac{h}{p})^{\frac{3}{2}} (1+\kappa) \big)  \| \tau^{\frac{1}{2}}_n (\sigma_h - \widehat{\sigma}_h) \|_{\partial \Ct_h}    \|\Bu_h\|_{\Ct_h}. \label{sta_uh_new0}
\end{align}
Here we choose $\tau_t = \frac{p}{h}$ and $\tau_n = \frac{(1+\kappa) h}{p}$. Then, the stability estimate (\ref{stability_u_h0}) can be obtained by (\ref{sta_uh_new0}), (\ref{estimate_mixed_1}) and the Young's inequality, and the stability estimates (\ref{stability_w_h0}) and (\ref{stability_ut_h0}) can be further derived as the analysis in Theorem \ref{stability_thm}.
\end{proof}

\begin{lemma}\label{error_ideal_lemma}
We assume that (\ref{est_dual_1}) holds with $\alpha = \frac{1}{2} $ and (\ref{est_dual_2}) also holds true. Let $(\Bw_h,\Bu_h,\widehat{\Bu}^t_h,\sigma_h, \widehat{\sigma}_h)$ be the solution of the problem (\ref{discrete_mixed_form}). We have
\begin{align}
\label{error_eu_thm0}
&\| \Bu - \Bu_h \|_{\Ct_h} \leq C \big( \widetilde{R}_{\Bw}  \| \Bw \|_{t,\Omega}+  \widetilde{R}_{\Bu} \| \Bu \|_{s,\Omega}\big) ,\\
\label{error_ew_thm0}
&\| \Bw - \Bw_h \|_{\Ct_h} \leq C \Big( \big(  \frac{h^t}{p^t}+\kappa \widetilde{R}_{\Bw}\big) \| \Bw \|_{t,\Omega}+\big( (1+(1+\kappa)^{-\frac{1}{2}}) \frac{h^{s-1}}{p^{s-1}}+ \kappa \widetilde{R}_{\Bu} \big) \| \Bu \|_{s,\Omega} \Big),
\end{align}
where $s\geq 1, t\geq 1$, $\widetilde{R}_{\Bw}:=\frac{(1+\kappa)^{\frac{1}{2}} h^{t+1} }{p^{t+1}}  +\frac{(1+\kappa) h^{t+1} }{p^{t+1}}$ and $\widetilde{R}_{\Bu}: = \frac{h^s}{p^s}+ \frac{(1+\kappa)^{\frac{1}{2}} h^{s} }{p^{s}}  +\frac{(1+\kappa) h^{s} }{p^{s}} $.
\end{lemma}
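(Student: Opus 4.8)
The plan is to mirror the proof of Theorem~\ref{error_thm}, using the sharper regularity available in the ideal setting---(\ref{est_dual_1}) with $\alpha=\frac{1}{2}$ together with (\ref{est_dual_2})---in place of the fractional estimate (\ref{est_dual_error_1_new}). First I would split each error by the triangle inequality, $\|\Bu-\Bu_h\|_{\Ct_h}\le\|\Bu-\boldsymbol{\Pi}_{\BU}\Bu\|_{\Ct_h}+\|\Be_{\Bu}\|_{\Ct_h}$ and $\|\Bw-\Bw_h\|_{\Ct_h}\le\|\Bw-\boldsymbol{\Pi}_{\BV}\Bw\|_{\Ct_h}+\|\Be_{\Bw}\|_{\Ct_h}$, bound the projection terms by (\ref{es_pj_1})--(\ref{es_pj_2}) (these supply the $h^s/p^s$ and $h^t/p^t$ contributions already present in $\widetilde{R}_{\Bu}$ and in (\ref{error_ew_thm0})), and invoke (\ref{pre_error_analysis_2}) of Lemma~\ref{error_analysis_1}---valid here verbatim since it uses only $\tau_t=p/h$, $\tau_n=(1+\kappa)h/p$ and $\sigma=0$---to replace $\|\Be_{\Bw}\|_{\Ct_h}$ by $\kappa\|\Be_{\Bu}\|_{\Ct_h}+C\eta(\Bw,\Bu)$. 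Everything then reduces to the ideal counterpart of Lemma~\ref{eu_pre}: $\|\Be_{\Bu}\|_{\Ct_h}\le C\big(\widetilde{R}_{\Bw}\|\Bw\|_{t,\Omega}+\widetilde{R}_{\Bu}\|\Bu\|_{s,\Omega}\big)$.

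For that estimate I would reuse the purely algebraic identity of Lemma~\ref{lemma_equality_error_eu}, $\|\Be_{\Bu}\|_{\Ct_h}^2=\sum_{k=1}^5 E_k$, obtained from the error equations (\ref{error_eq}) and the dual first-order system (\ref{dual_FOS_error}) with $\BJ=\Be_{\Bu}$; this identity is independent of which regularity the dual solution enjoys. The new ingredient is that (\ref{est_dual_1}) with $\alpha=\frac{1}{2}$ and (\ref{est_dual_2}), both taken with $\BJ=\Be_{\Bu}$, give
\[
\|\boldsymbol{\Phi}\|_{1,\Omega}+\|\boldsymbol{\Psi}\|_{2,\Omega}+\kappa\|\boldsymbol{\Psi}\|_{1,\Omega}+\kappa(1+\kappa)\|\boldsymbol{\Psi}\|_{0,\Omega}+(1+\kappa)\|\varphi\|_{1,\Omega}\le C(1+\kappa)\|\Be_{\Bu}\|_{\Ct_h}.
\]
I would then bound $E_1$ by Cauchy--Schwarz with the trace estimate (\ref{es_pj_4}) at $t=1$ applied to $\boldsymbol{\Phi}-\boldsymbol{\Pi_V\Phi}$; $E_3$ and $E_5$, after inserting the identity (\ref{error_eq3}) and integrating by parts, respectively, using (\ref{es_pj_5}) at $s=2$ applied to $\boldsymbol{\Psi}-\boldsymbol{\Pi_U\Psi}$---the step that gains the extra factor $h/p$ over the fractional case (it needs only $p\ge 1$); $E_2$ treated just as in Lemma~\ref{eu_pre}, since $\varphi\in H^1(\Omega)$ in either case, via the trace approximation estimates, the identity (\ref{error_eq4}) and $\sigma=0$; and $E_4=0$ verbatim by choosing $\boldsymbol{\eta}_h=\BP_{\BM}\boldsymbol{\Psi}^t$ in the boundary error equation (\ref{error_eq_e}). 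The factors $\|\tau_t^{1/2}(\Be_{\Bu}^t-\Be_{\widehat{\Bu}^t})\|_{\partial\Ct_h}$ and $\|\tau_n^{1/2}(e_\sigma-e_{\widehat{\sigma}})\|_{\partial\Ct_h}$ appearing on the way are replaced by $C\eta(\Bw,\Bu)$ using (\ref{pre_error_analysis_1}), and a Young inequality absorbs the factored-out $\|\Be_{\Bu}\|_{\Ct_h}$.

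Collecting the five terms yields $\|\Be_{\Bu}\|_{\Ct_h}\le C(\widetilde{R}_{\Bw}\|\Bw\|_{t,\Omega}+\widetilde{R}_{\Bu}\|\Bu\|_{s,\Omega})$: the $(1+\kappa)h^{t+1}/p^{t+1}$ part of $\widetilde{R}_{\Bw}$ comes from the $\Bw$-piece of $E_3$ (a $(h/p)^{t-1/2}$ projection bound times the $(h/p)^{3/2}$ bound for $\boldsymbol{\Psi}-\boldsymbol{\Pi_U\Psi}$ together with $\|\boldsymbol{\Psi}\|_{2,\Omega}\le C(1+\kappa)\|\Be_{\Bu}\|_{\Ct_h}$), while the $(1+\kappa)^{1/2}h^{t+1}/p^{t+1}$ part and the three terms of $\widetilde{R}_{\Bu}$ come from the $\tau_n$-weighted parts of $E_2$ and $E_5$ and the $\Bu$-piece of $E_3$ once $\eta(\Bw,\Bu)$ is substituted. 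Then (\ref{error_eu_thm0}) follows by absorbing $\|\Bu-\boldsymbol{\Pi}_{\BU}\Bu\|_{\Ct_h}\le C(h/p)^s\|\Bu\|_{s,\Omega}$ into $\widetilde{R}_{\Bu}\|\Bu\|_{s,\Omega}$, and (\ref{error_ew_thm0}) follows by plugging the $\|\Be_{\Bu}\|_{\Ct_h}$ bound and the formula for $\eta(\Bw,\Bu)$ into $\|\Bw-\boldsymbol{\Pi}_{\BV}\Bw\|_{\Ct_h}+\kappa\|\Be_{\Bu}\|_{\Ct_h}+C\eta(\Bw,\Bu)$ and regrouping by $\|\Bw\|_{t,\Omega}$ and $\|\Bu\|_{s,\Omega}$.

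The main obstacle is bookkeeping rather than anything conceptual: one must track the weights $\tau_t=p/h$, $\tau_n=(1+\kappa)h/p$ and the powers of $(1+\kappa)$, $\kappa$, $h$ and $p$ through all five $E_k$ so that the sharp regularity and the approximation estimates (used at the highest Sobolev index the ideal regularity allows) assemble exactly into $\widetilde{R}_{\Bw}$ and $\widetilde{R}_{\Bu}$; no Helmholtz decomposition or projection-based interpolant is needed, just as in Theorem~\ref{error_thm}.
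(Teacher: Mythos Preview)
Your proposal is correct and follows essentially the same approach as the paper: the paper states the ideal regularity estimate for the dual problem, then says ``Repeating the similar estimates in Lemma~\ref{eu_pre} for $E_1,\ldots,E_5$ and using (\ref{pre_error_analysis_1})'' to obtain the bound on $\|\Be_{\Bu}\|_{\Ct_h}$, and finishes with the triangle inequality, the approximation properties of $\boldsymbol{\Pi_U}$, $\boldsymbol{\Pi_V}$, and (\ref{pre_error_analysis_2}). Your outline is in fact more explicit than the paper's about where each term in $\widetilde{R}_{\Bw}$ and $\widetilde{R}_{\Bu}$ originates, but the strategy and ingredients are identical.
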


\begin{proof}
When (\ref{est_dual_1}) holds with $\alpha = \frac{1}{2} $ and (\ref{est_dual_2}) also holds, we have the following regularity estimate for the dual problem (\ref{dual_FOS_error}),
\begin{align*}
\| \boldsymbol{\Phi} \|_{1,\Omega} + \| \boldsymbol{\Psi} \|_{2,\Omega} + \kappa \| \boldsymbol{\Psi} \|_{1,\Omega} + \|\curl \boldsymbol{\Psi}\|_{1,\Omega} + (1+\kappa)\kappa \| \boldsymbol{\Psi}\|_{0,\Omega}     \leq C (1+\kappa) \|\Be_{\Bu} \|_{\Ct_h}.
\end{align*}
Repeating the similar estimates in Lemma \ref{eu_pre} for $E_1,\cdots,E_5$ and using (\ref{pre_error_analysis_1}), we obtain
\begin{align}
\|  \Be_{\Bu}  \|_{\Ct_h} \leq  C \big( \widetilde{R}_{\Bw}  \| \Bw \|_{t,\Omega}+  \widetilde{R}_{\Bu} \| \Bu \|_{s,\Omega}\big).\label{last_est0}
\end{align}
Then the error estimate (\ref{error_eu_thm0}) is obtained directly by the triangular inequality, the approximation property of $\boldsymbol{\Pi_U}$ and the above estimate. The error estimate  (\ref{error_ew_thm0}) can also be obtained by the triangular inequality, the approximation property of $\boldsymbol{\Pi_V}$, (\ref{pre_error_analysis_2}) and (\ref{last_est0}).
\end{proof}

\begin{remark}
By Lemma \ref{error_ideal_lemma}, under the assumptions made in the section, we have
\begin{align*}
\| \Bu - \Bu_h \|_{\Ct_h} &\leq C \big( \frac{\kappa h^2}{p^2} + \frac{\kappa^{\frac{3}{2}} h^2}{p^2} + \frac{\kappa^2 h^2}{p^2}   \big) \widehat{\BM}(\Bf,\Bg) \leq C(  \frac{\kappa h^2}{p^2}  + \frac{\kappa^2 h^2}{p^2} ),\\
\| \Bw - \Bw_h \|_{\Ct_h}& \leq C \big( \frac{\kappa h}{p} + \frac{\kappa^2 h^2}{p^2} + \frac{\kappa^{\frac{5}{2}} h^2}{p^2}  + \frac{\kappa^3 h^2}{p^2}   \big) \widehat{\BM}(\Bf,\Bg) \leq C( \frac{\kappa h}{p} +  \frac{\kappa^3 h^2}{p^2} ).
\end{align*}
Here $\kappa > 1$ and $\widehat{\BM}(\Bf,\Bg)  = {\BM}(\Bf,\Bg) + \frac{1}{\kappa} \|\Bg\|_{\frac{1}{2},\partial \Omega} $. The above estimates indicate that the error $\| \Bw - \Bw_h \|_{\Ct_h} $ can not be controlled by $\frac{\kappa h}{p}$, and the pollution term is of order $O( \frac{\kappa^3 h^2}{p^2})$. This provides evidence of the existence of the so-called ``pollution effect''. When $\frac{\kappa^3 h^2}{p^2}\leq C$, the discrete stability estimates for $\|\Bu_h\|_{\Ct_h}$ and $\|\Bw_h\|_{\Ct_h}$ can be improved as $ \|\Bu_h\|_{\Ct_h} \leq \frac{C}{\kappa} {\BM}(\Bf,\Bg)$ and  $\|\Bw_h\|_{\Ct_h} \leq C {\BM}(\Bf,\Bg)$.
\end{remark}

\section{Numerical results}
In this section, we present numerical results of the HDG method for the following time-harmonic Maxwell problem (cf. \cite{FW2014}) in a unit cube $\Omega=[0,1]\times[0,1]\times[0,1] $:
\begin{align*}
{\bf curl}\, {\bf curl}\, {\Bu}-\kappa^2 {\Bu}&={\bf 0 } \qquad {\rm in }\ \Omega,\\
{\bf curl}\, {\Bu}\times \boldsymbol{n}
-{\bf i}\kappa {\Bu}^t&=\widetilde{\Bg} \qquad {\rm on }\
\partial\Omega.
\end{align*}
Here  $\widetilde{\Bg}$ is chosen such that the exact solution is given by
$$\Bu=(e^{{\bf i}\kappa z},e^{{\bf i}\kappa x},e^{{\bf i}\kappa y})^T.$$

The time-harmonic Maxwell problem (\ref{pde_original}) is an approximation of electromagnetic scattering problem with time dependence $e^{{\bf i} \omega t}$, where $\omega$ is frequency. If the problem is proposed with time dependence $e^{-{\bf i} \omega t}$, then the sign before ${\bf i}$ in (\ref{BC-PDE}) is negative. The analysis of the HDG method in this paper fits well for both of cases. 
In the following experiment, we apply the HDG method with
piecewise linear (HDG-P1), piecewise quadratic (HDG-P2) and
piecewise cubic (HDG-P3) finite element spaces respectively to the second case. For the fixed wave
number $\kappa$, we first show the dependence of the convergence of
$\|\Bu-\Bu_h\|_{0,\Omega}$ and $\|\Bw-\Bw_h\|_{0,\Omega}$ on
polynomial order $p$ and mesh size $h$. Figure \ref{fig1} displays
the above  errors for $\kappa=20$ by the HDG-P1, HDG-P2, and HDG-P3
approximations. The pollution errors always appear on
the coarse meshes. However, we find that the errors converge almost
in $O(\kappa h^2/p^2)$ on the fine meshes, which is a little better than the
theoretical prediction for $\|\Bw-\Bw_h\|_{0,\Omega}$. On the other
hand, for the cases of $\kappa=30$ and $\kappa=50$, Figure \ref{fig2}
shows that the errors of $\|\Bu-\Bu_h\|_{0,\Omega}$ and
$\|\Bw-\Bw_h\|_{0,\Omega}$ always decrease for high order polynomial
approximations.

\begin{figure}[htbp]
\centering
    \includegraphics[width=3in,height=1.6in]{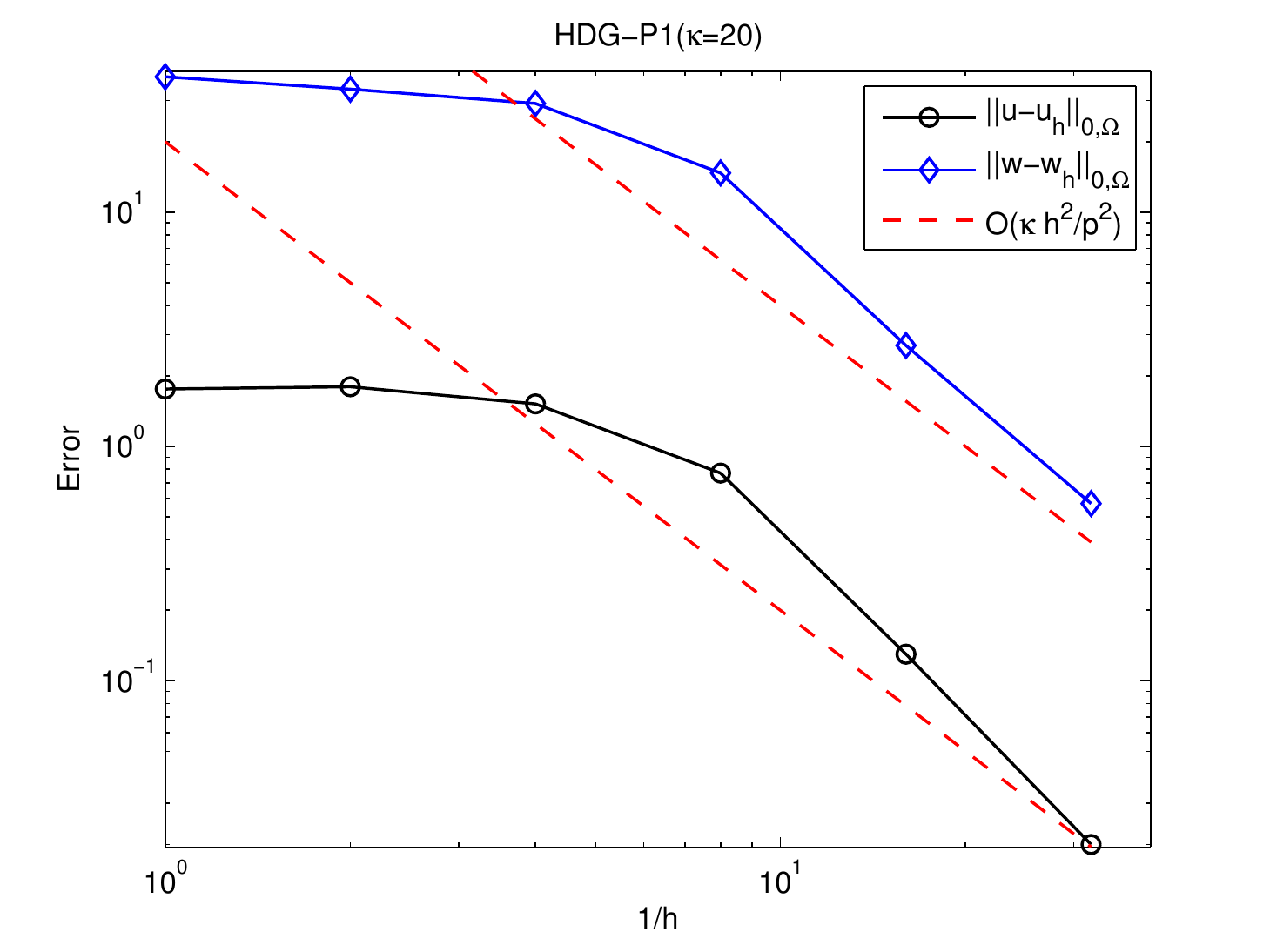}
    \includegraphics[width=3in,height=1.6in]{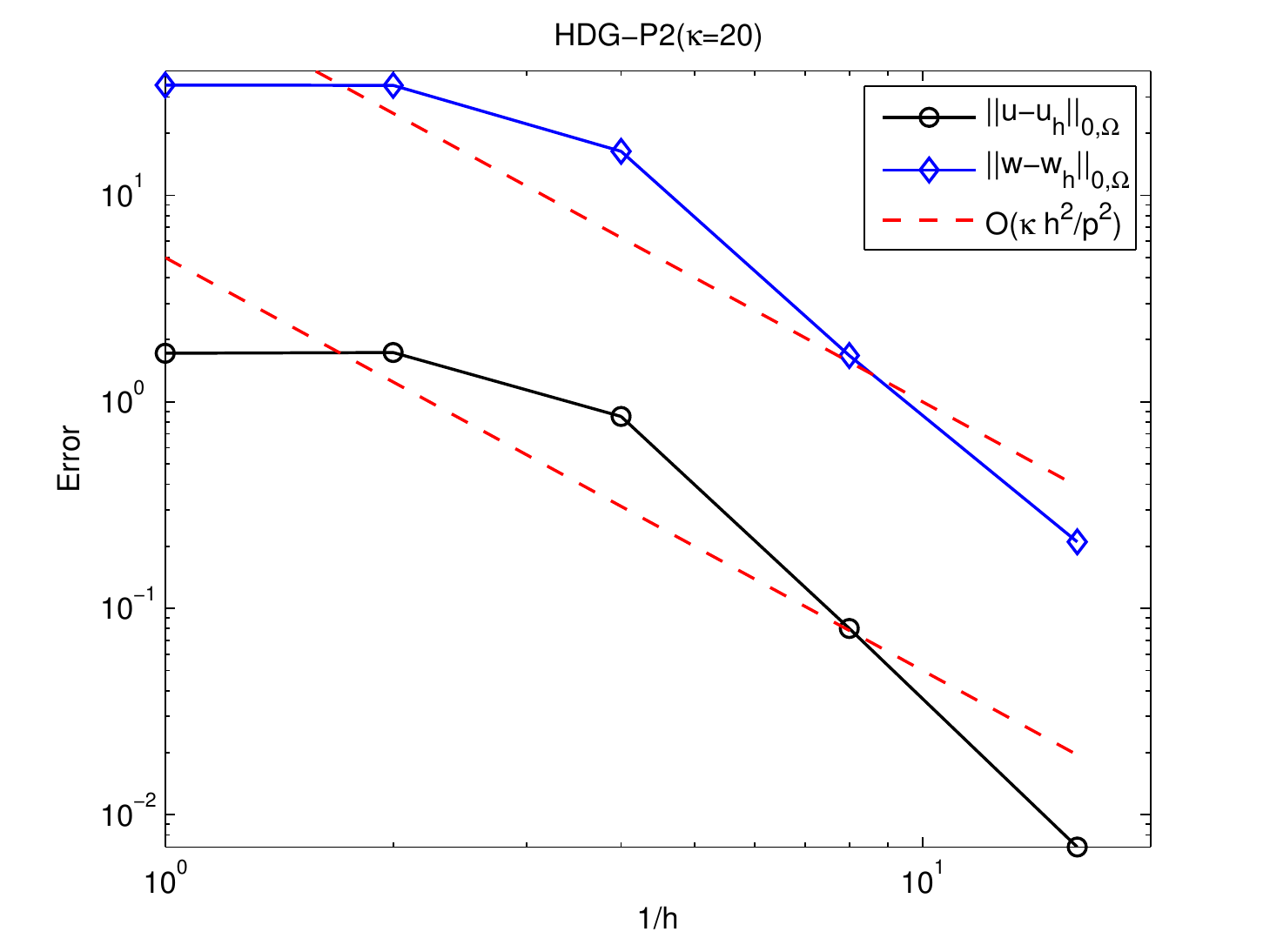}
    \includegraphics[width=3in,height=1.6in]{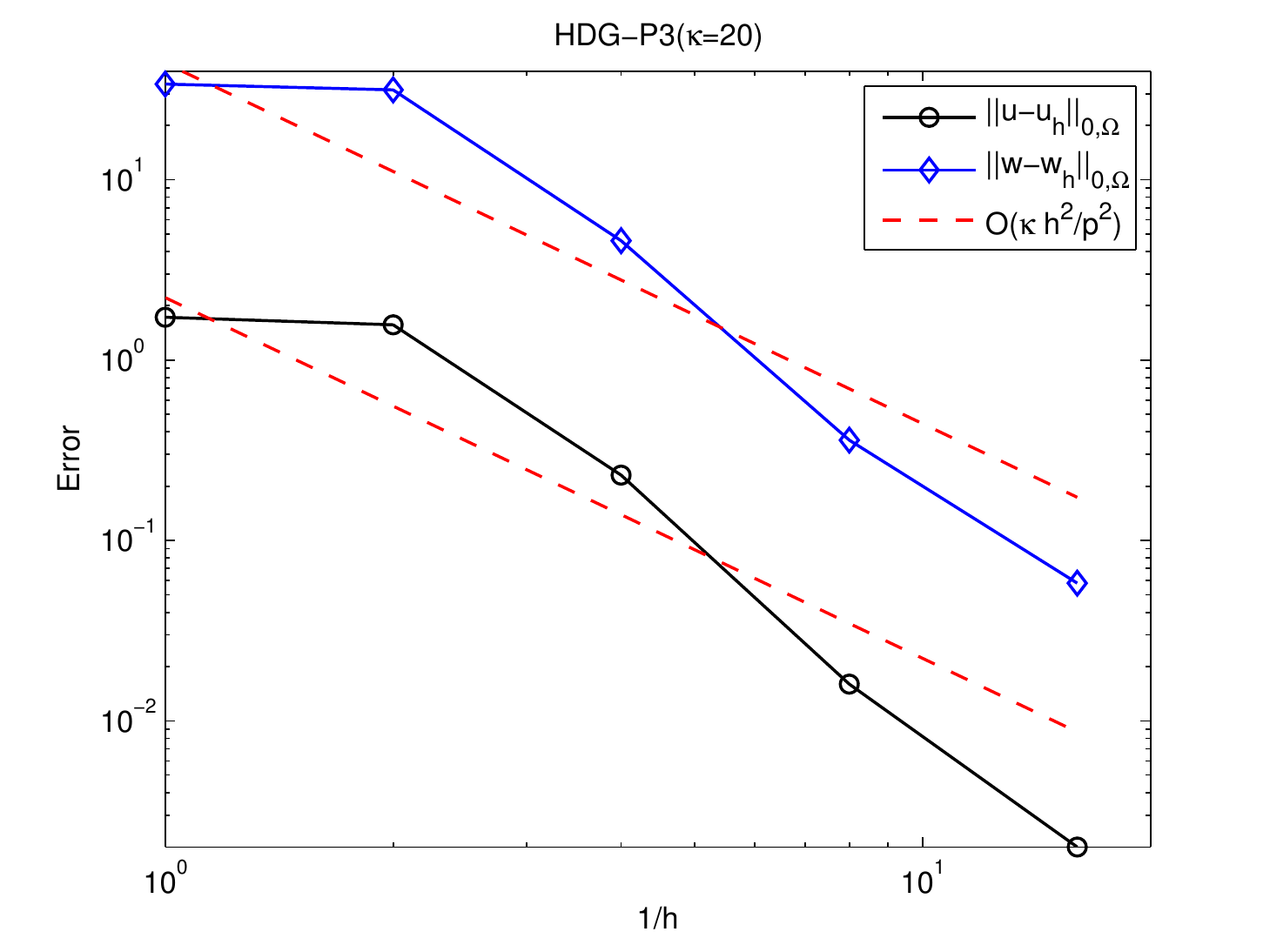}
    \caption{\footnotesize Errors of $\|\Bu-\Bu_h\|_{0,\Omega}$ and $\|\Bw-\Bw_h\|_{0,\Omega}$ for $\kappa=20$ by the HDG-P1, HDG-P2 and HDG-P3 approximations.}\label{fig1}
\end{figure}

\begin{figure}[htbp]
\centering
    \includegraphics[width=3in,height=1.6in]{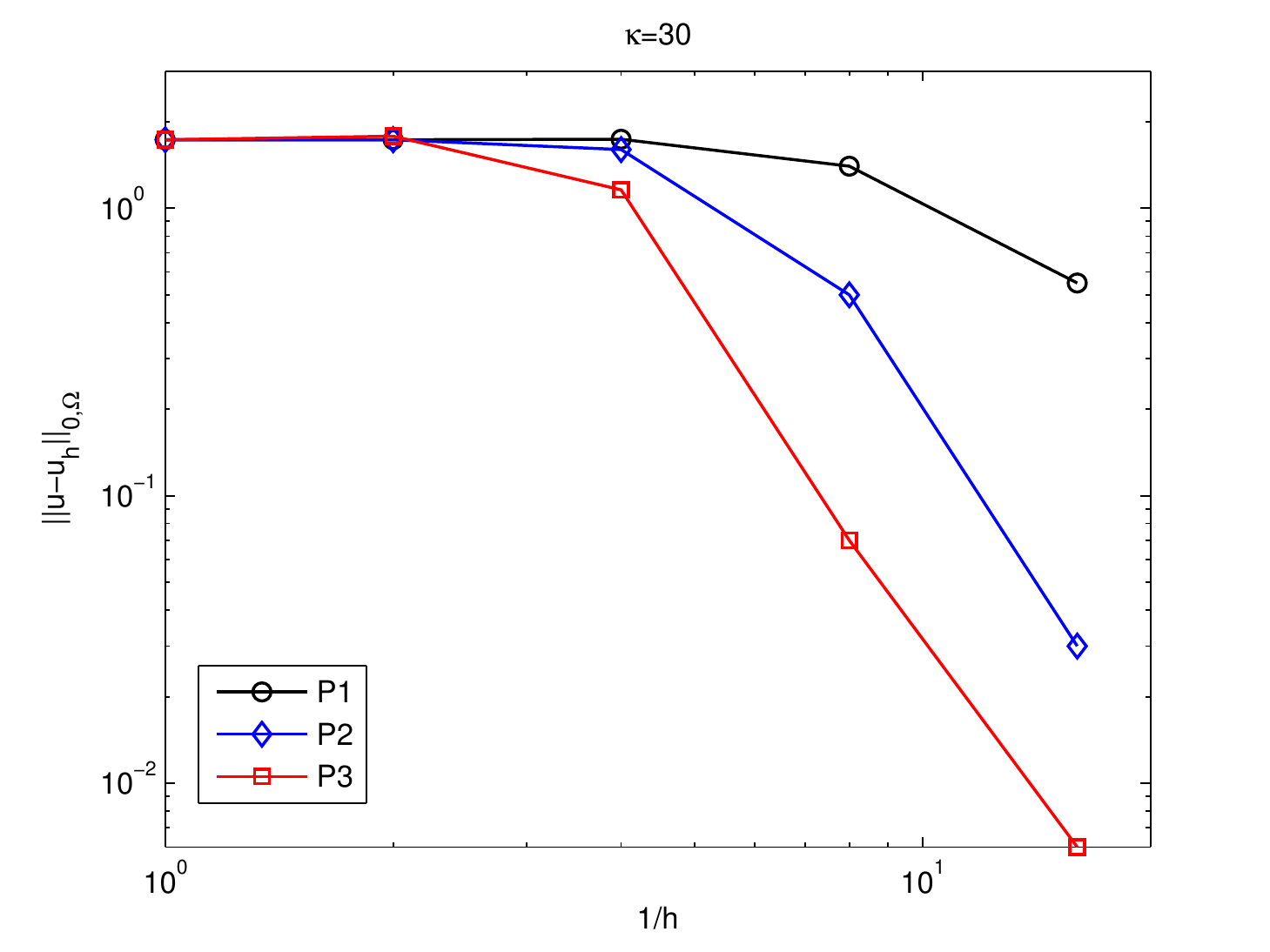}
    \includegraphics[width=3in,height=1.6in]{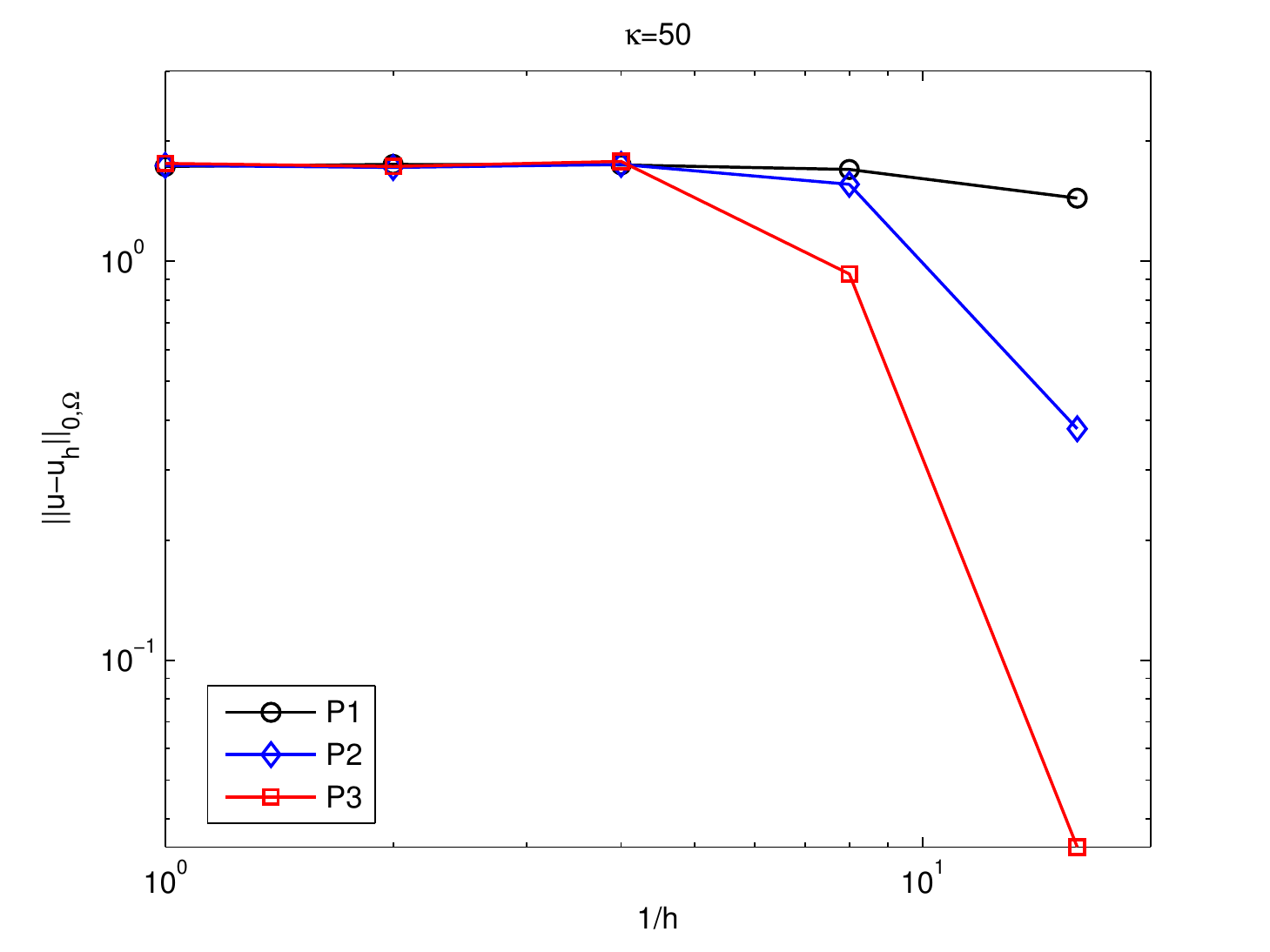}
    \includegraphics[width=3in,height=1.6in]{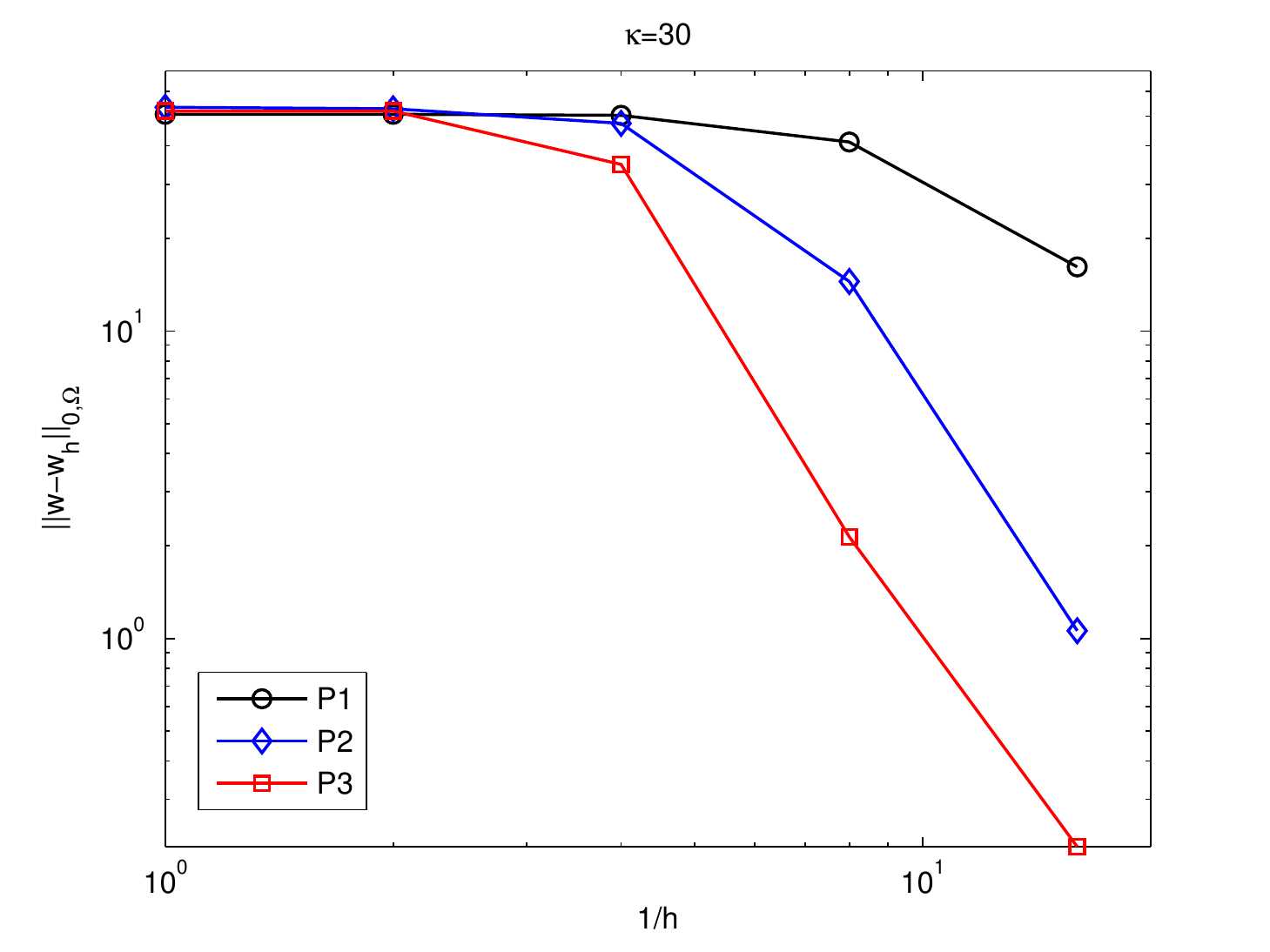}
     \includegraphics[width=3in,height=1.6in]{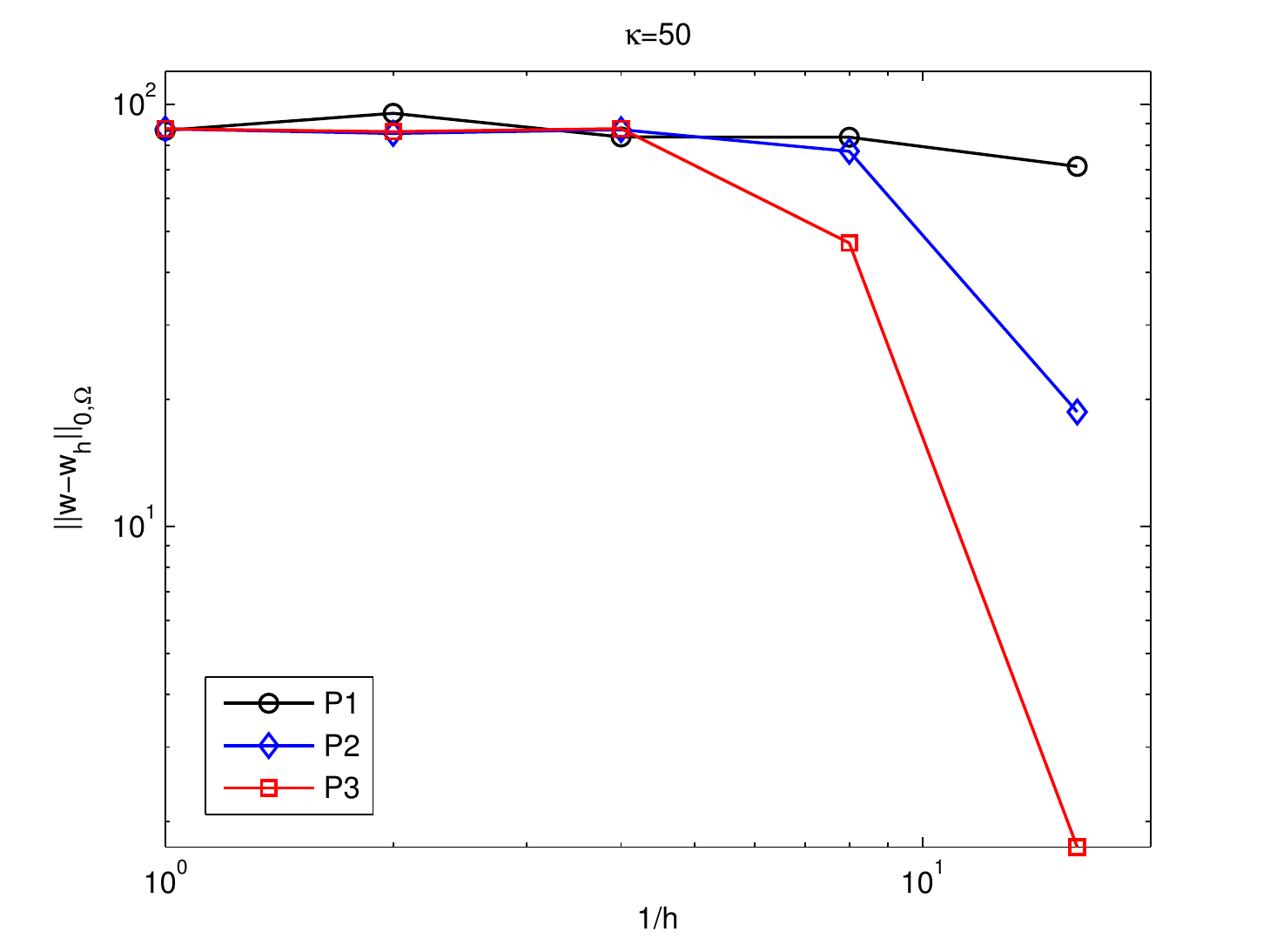}
    \caption{\footnotesize Errors of $\|\Bu-\Bu_h\|_{0,\Omega}$ and $\|\Bw-\Bw_h\|_{0,\Omega}$ for $\kappa=30$ and $\kappa=50$ by the HDG-P1,HDG-P2 and HDG-P3 approximations.}\label{fig2}
\end{figure}

Figure \ref{fig3} displays the relative errors $\|\Bu-\Bu_h\|_{0,\Omega}/\|\Bu\|_{0,\Omega}$ and $\|\Bw-\Bw_h\|_{0,\Omega}/\|\Bw\|_{0,\Omega}$ for the HDG-P1 approximation according to different mesh size conditions. The left graph of Figure \ref{fig3} shows the relationship between the relative errors and the wave number $\kappa$ under the mesh condition $\kappa h = 2$ for the HDG-P1 approximation. We observe that the relative errors
cannot be controlled by $\kappa h $ and increase with $\kappa $,
which indicates the existence of the pollution error. The right
graph of Figure \ref{fig3} shows the relative errors of the HDG-P1
approximation under the mesh condition $\kappa^3h^2 = 2$. It shows that
under this mesh condition, the relative errors do not increase with
$\kappa$.

\begin{figure}[htbp]
\centering
    \includegraphics[width=3in,height=1.6in]{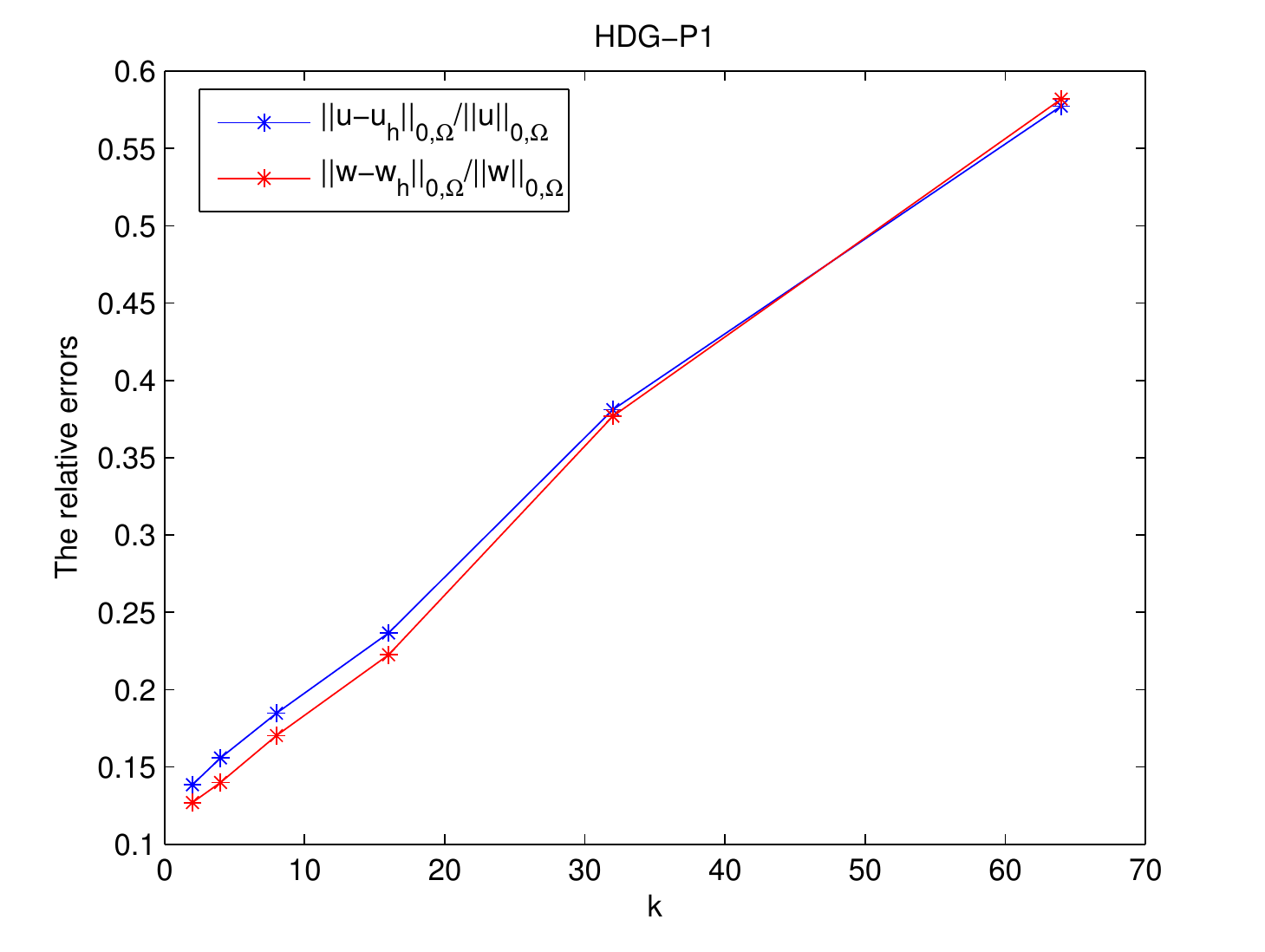}
    \includegraphics[width=3in,height=1.6in]{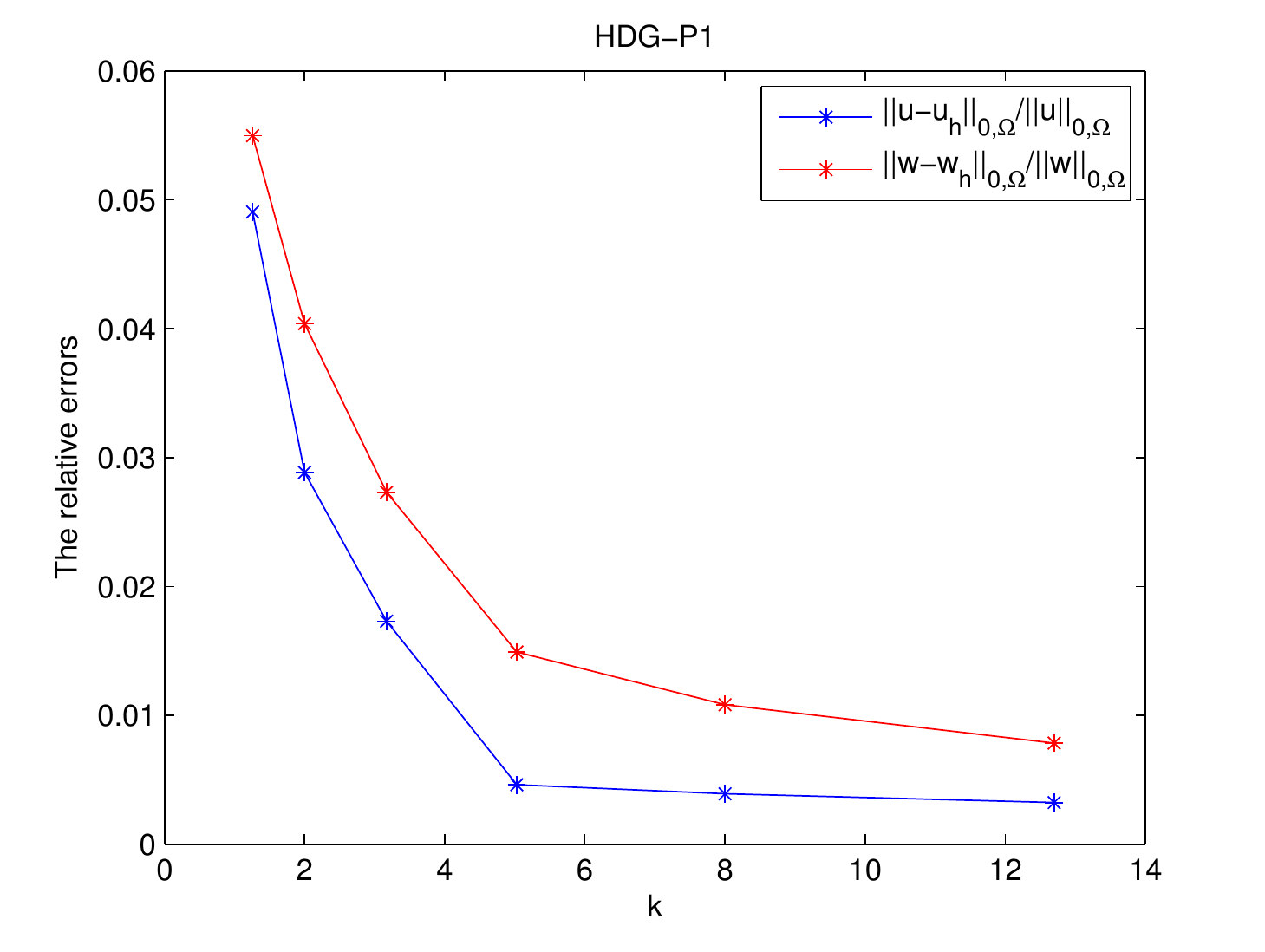}
   \caption{\footnotesize Left: The relative errors $\|\Bu-\Bu_h\|_{0,\Omega}/\|\Bu\|_{0,\Omega}$ and $\|\Bw-\Bw_h\|_{0,\Omega}/\|\Bw\|_{0,\Omega}$ for the HDG-P1 approximation under the mesh condition $\kappa h= 2$.
    Right: The relative errors $\|\Bu-\Bu_h\|_{0,\Omega}/\|\Bu\|_{0,\Omega}$ and $\|\Bw-\Bw_h\|_{0,\Omega}/\|\Bw\|_{0,\Omega}$ for the HDG-P1 approximation under the mesh condition  $\kappa^3 h^2= 2$.}\label{fig3}
\end{figure}

For fixed wave number $\kappa$, we show the relative error $\|\Bw-\Bw_h\|_{0,\Omega}/\|\Bw\|_{0,\Omega}$ for the HDG-P1 approximation with respect to the relative error $\|{\bf curl} (\Bu- \Bu_h)\|_{0,\Omega}/\|{\bf curl}\, \Bu\|_{0,\Omega}$ for the standard lowest-order edge element approximation of the second type. The left graph of Figure \ref{fig4}
displays the relative error of the HDG-P1 solution for $\kappa= 10, 20, 30$, while the right one shows the relative error for the same cases based on the standard lowest-order edge element
method. We find that the relative error for the HDG-P1 approximation stays around
100\% while the relative error for the standard edge element approximation oscillates around 100\% before they are less than 100\%, which confirms the stability property of our theoretical analysis for the HDG method and indicates that the HDG method is more stable
than the standard edge element method for the time-harmonic Maxwell problem  with high wave
number.

\begin{figure}[htbp]
\centering
    \includegraphics[width=2.8in,height=1.6in]{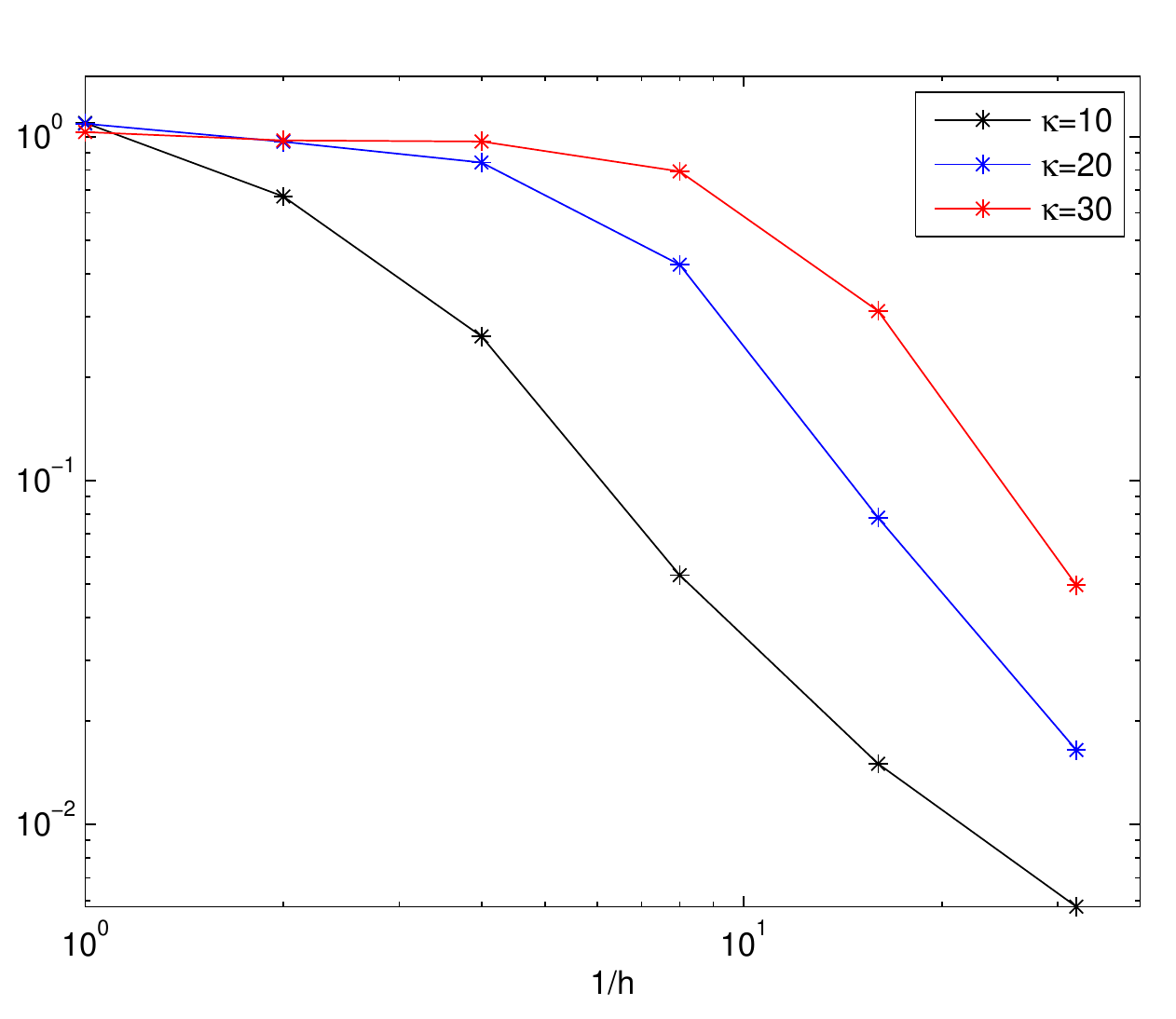}
    \includegraphics[width=2.8in,height=1.6in]{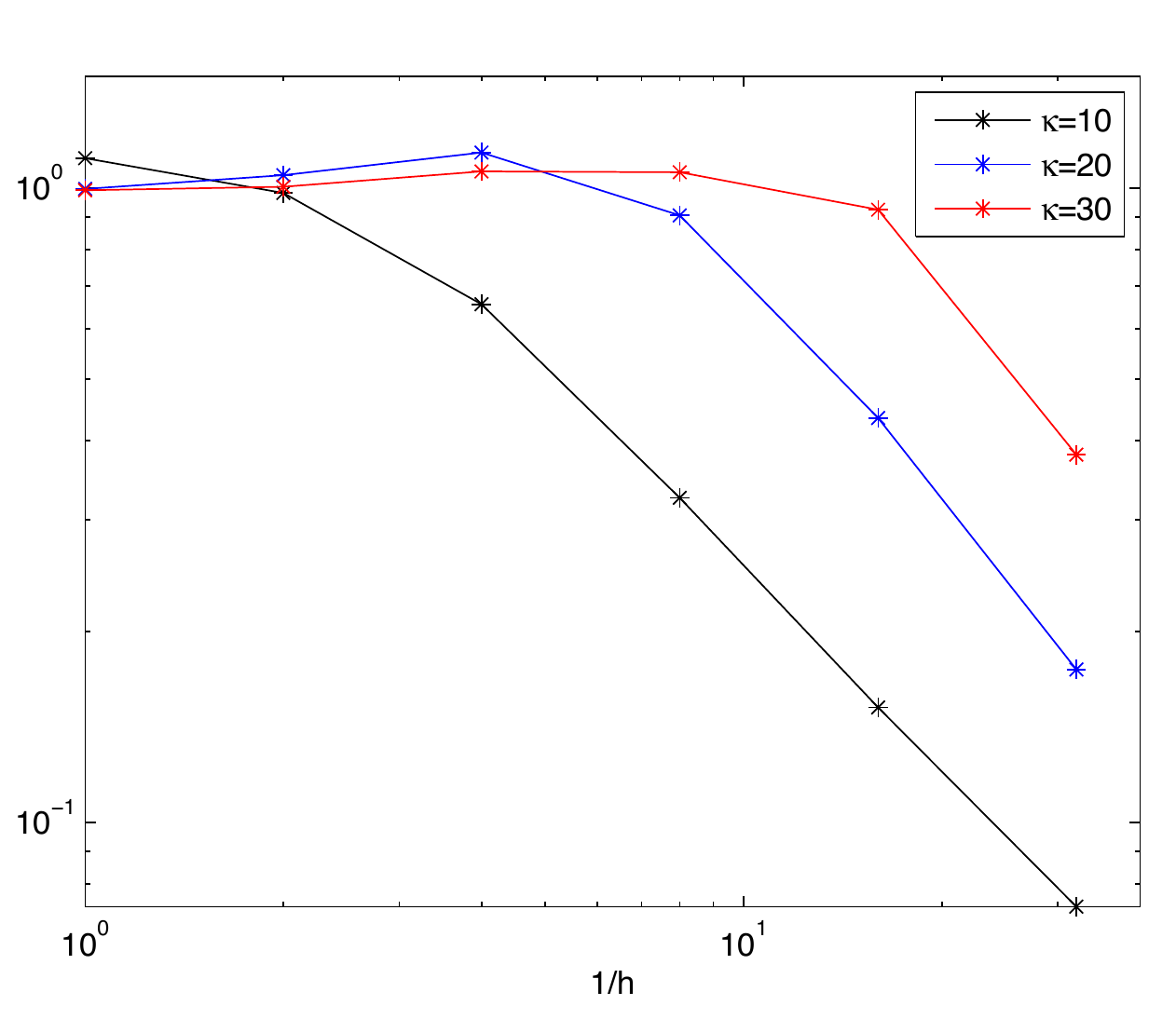}
    \caption{\footnotesize The relative error $\|\Bw-\Bw_h\|_{0,\Omega}/\|\Bw\|_{0,\Omega}$ (left) for the HDG-P1 approximation and the relative error $\|{\bf curl} (\Bu- \Bu_h)\|_{0,\Omega}/\|{\bf curl} \, \Bu\|_{0,\Omega}$ (right) for the lowest-order edge element (the second type) approximation for $\kappa=10,20,30$ respectively. }\label{fig4}
\end{figure}

\begin{table}[htbp]
\caption{\footnotesize The relative error $\|{\bf curl} (\Bu- \Bu_h)\|_{0,\Omega}/\|{\bf curl} \,\Bu\|_{0,\Omega}$ for the lowest-order edge element (the second type) approximation for the case $\kappa=50$ and the relative error $\|\Bw-\Bw_h\|_{0,\Omega}/\|\Bw\|_{0,\Omega}$ for the HDG-P1, HDG-P2 and HDG-P3 approximations with respect to different DOFs.}\label{tab1}
\begin{center}
\footnotesize
\begin{tabular}{|c|c|c|cccc|}
\hline
\multirow{3}{*}{Edge element}
 & \multicolumn{2}{|c|}{DOFs}                                  & 8368   & 62048 &  477376 &  3744128 \\
\cline{2-7}
 & \multicolumn{2}{|c|}{The relative error }    &  111.9\%     &   115.8\%    &   109.1\%   &  42.7\% \\
 \hline
\multirow{3}{*}{HDG-P1}
 & \multicolumn{2}{|c|}{DOFs}                                  & 9792   & 76032 &  599040 & 4755456   \\
\cline{2-7}
 & \multicolumn{2}{|c|}{The relative error }   &  96.8\%     &   96.7\%    &   82.3\%   & 30\%   \\
 \hline
\multirow{3}{*}{HDG-P2}
 & \multicolumn{2}{|c|}{DOFs}                                  & ---   & 19584 &  152064 & 1198080   \\
\cline{2-7}
 & \multicolumn{2}{|c|}{The relative error }   &  ---   &   100\%    &  89.4\%   & 21.6\%   \\
 \hline
\multirow{3}{*}{HDG-P3}
 & \multicolumn{2}{|c|}{DOFs}                                  & ---   & 32640 &  253440 & 1996800   \\
\cline{2-7}
 & \multicolumn{2}{|c|}{The relative error }   &  ---   &   100\%    &  54.3\%   & 2\%   \\
 \hline

\end{tabular}
\end{center}
\end{table}

Table \ref{tab1} shows the numbers of degrees of freedom (DOFs) and
the relative error $\|{\bf curl} (\Bu- \Bu_h)\|_{0,\Omega}/\|{\bf curl}\, \Bu\|_{0,\Omega}$ for
the standard edge element approximation with respect to the relative
error $\|\Bw-\Bw_h\|_{0,\Omega}/\|\Bw\|_{0,\Omega}$ for the HDG-P1, HDG-P2 and
HDG-P3 approximations. It can be observed that the HDG-P1 approximation performs
better than the standard edge element method when the numbers of DOFs are close. We
can also find that the HDG method with higher order polynomial approximation may
reach more accurate solutions with less DOFs, which indicates the
efficiency of the HDG method with high polynomial order for the
time-harmonic Maxwell problem with high wave number. We should
note that the numerical results in \cite{FW2014} show the stability
of the IPDG method based on the piecewise linear polynomial approximation for the time-harmonic Maxwell problem with high wave number. Here, our HDG method preserves the advantages of the IPDG method in \cite{FW2014}, and it results in a discrete system with significantly reduced DOFs when it is applied for the high order polynomial approximation.

\begin{figure}[htbp]
\includegraphics[width=3in,height=1.6in]{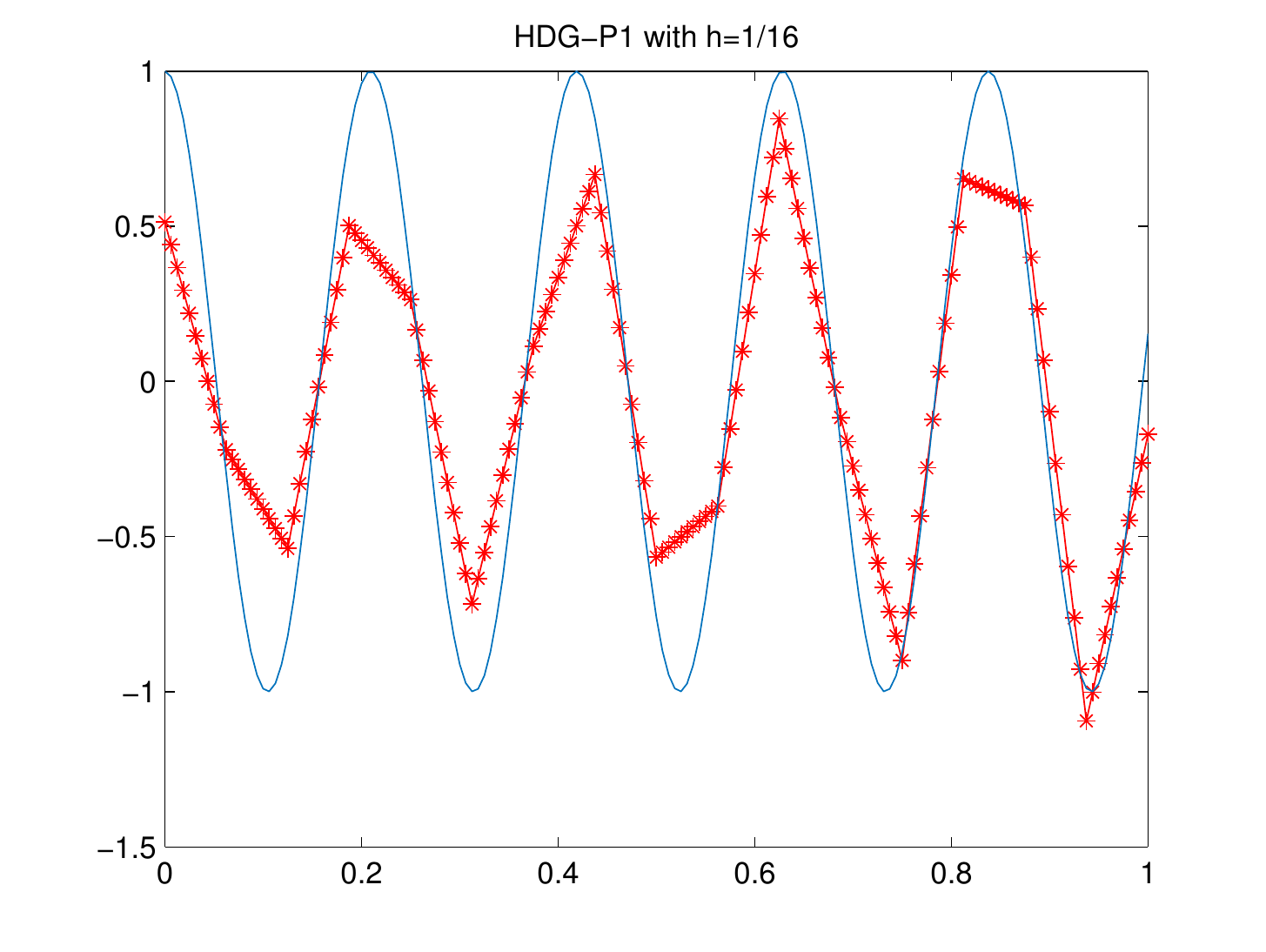}
\includegraphics[width=3in,height=1.6in]{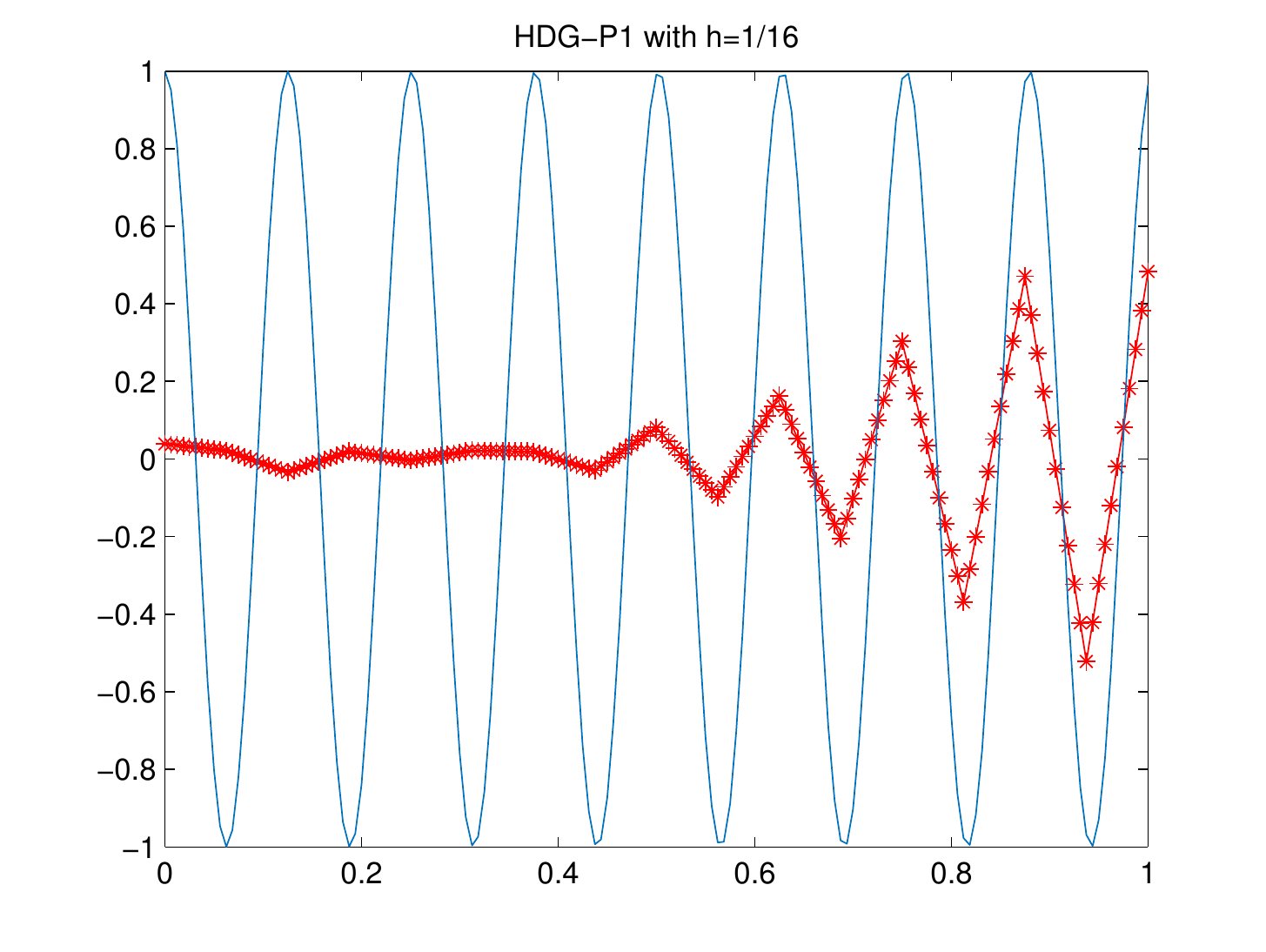}
\includegraphics[width=3in,height=1.6in]{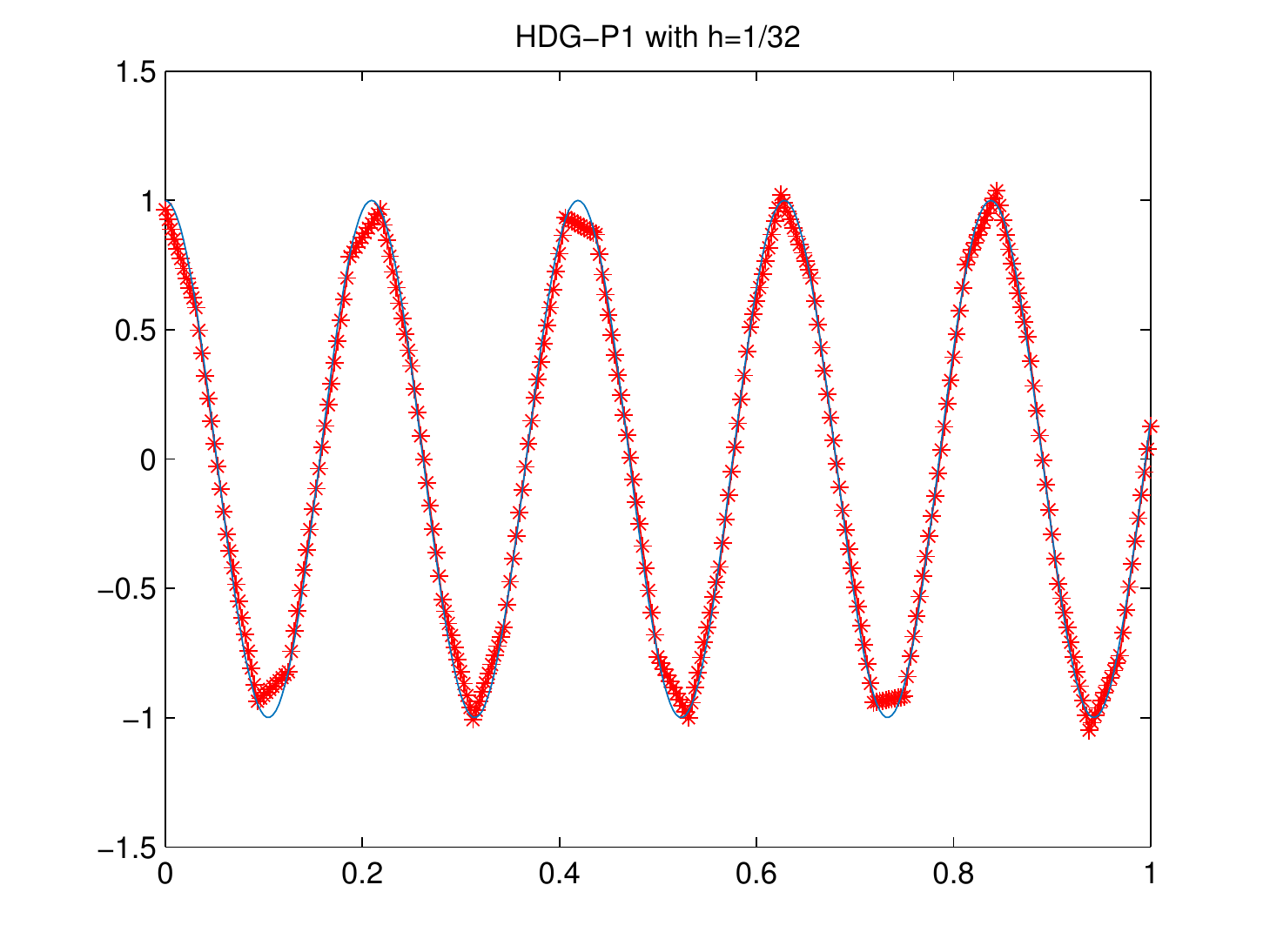}
\includegraphics[width=3in,height=1.6in]{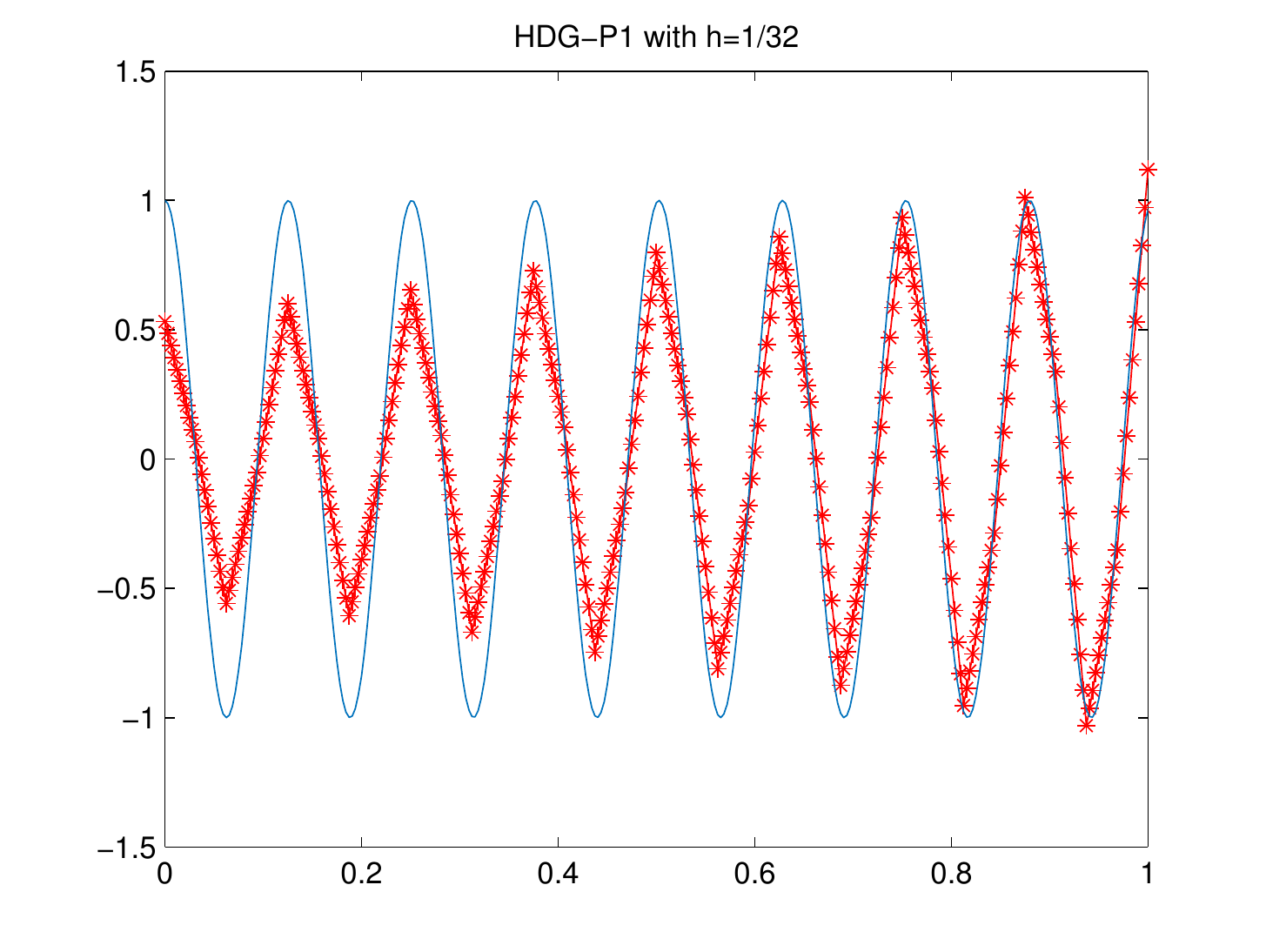}
\caption{\footnotesize The traces of the real part of the first
component of the HDG-P1 solutions for $\kappa=30$ and $\kappa=50$ (left and right) on the meshes with $h=1/16$ and $h=1/32$ (top and bottom). The traces of the real part of the first component of the exact solution are plotted in the blue lines.}\label{fig5}
\end{figure}

\begin{figure}[htbp]
\includegraphics[width=3in,height=1.6in]{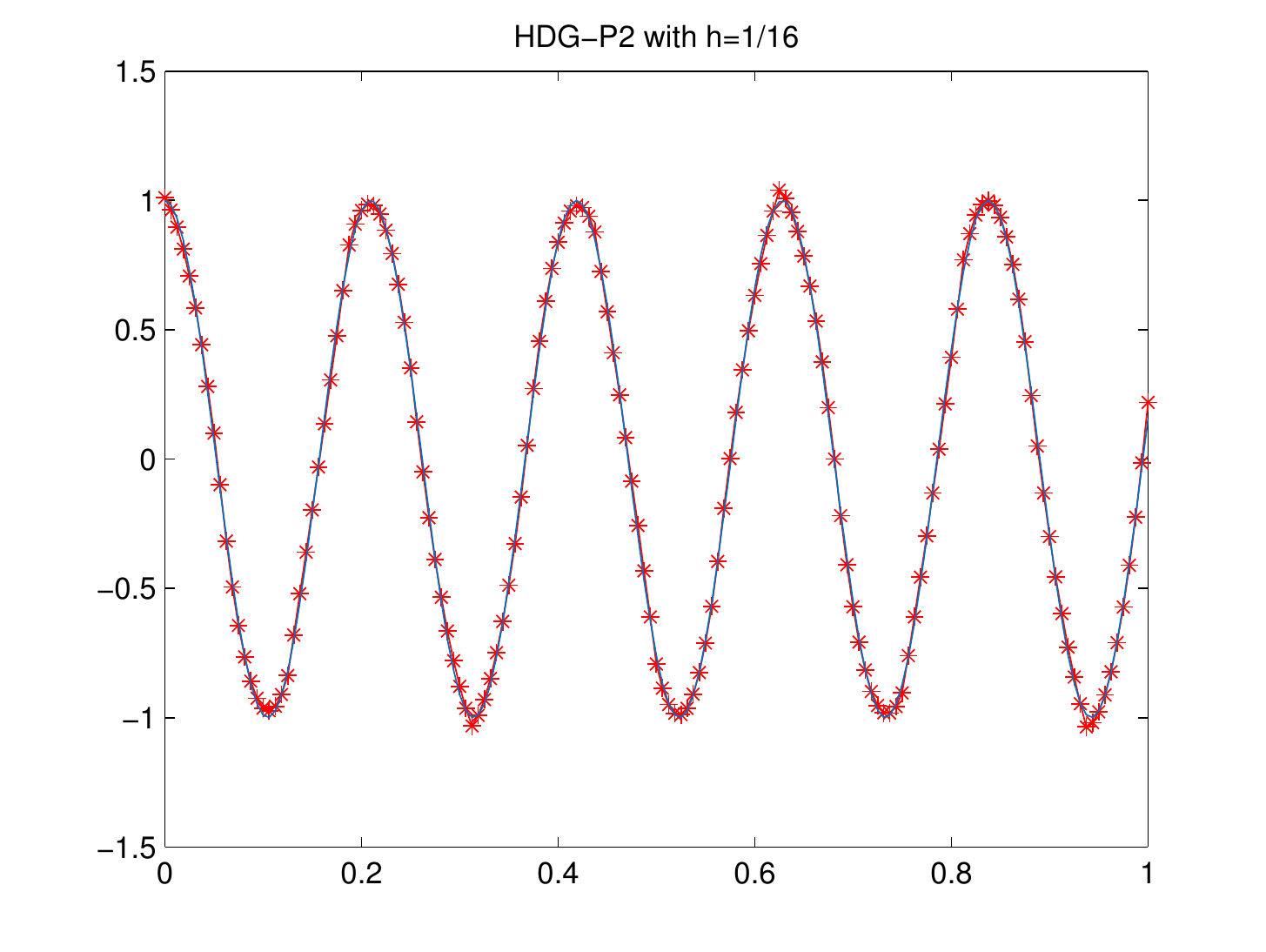}
\includegraphics[width=3in,height=1.6in]{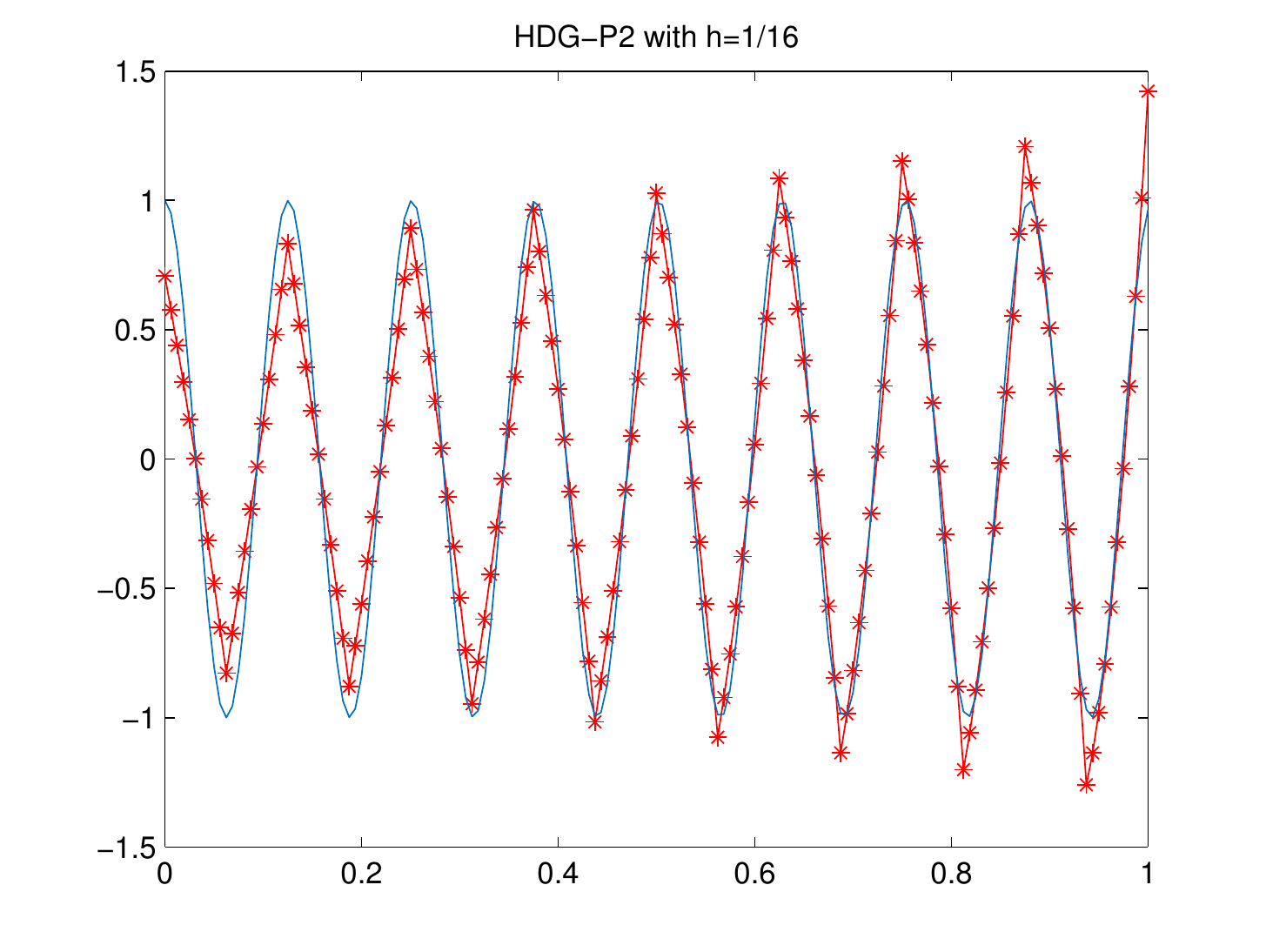}
\includegraphics[width=3in,height=1.6in]{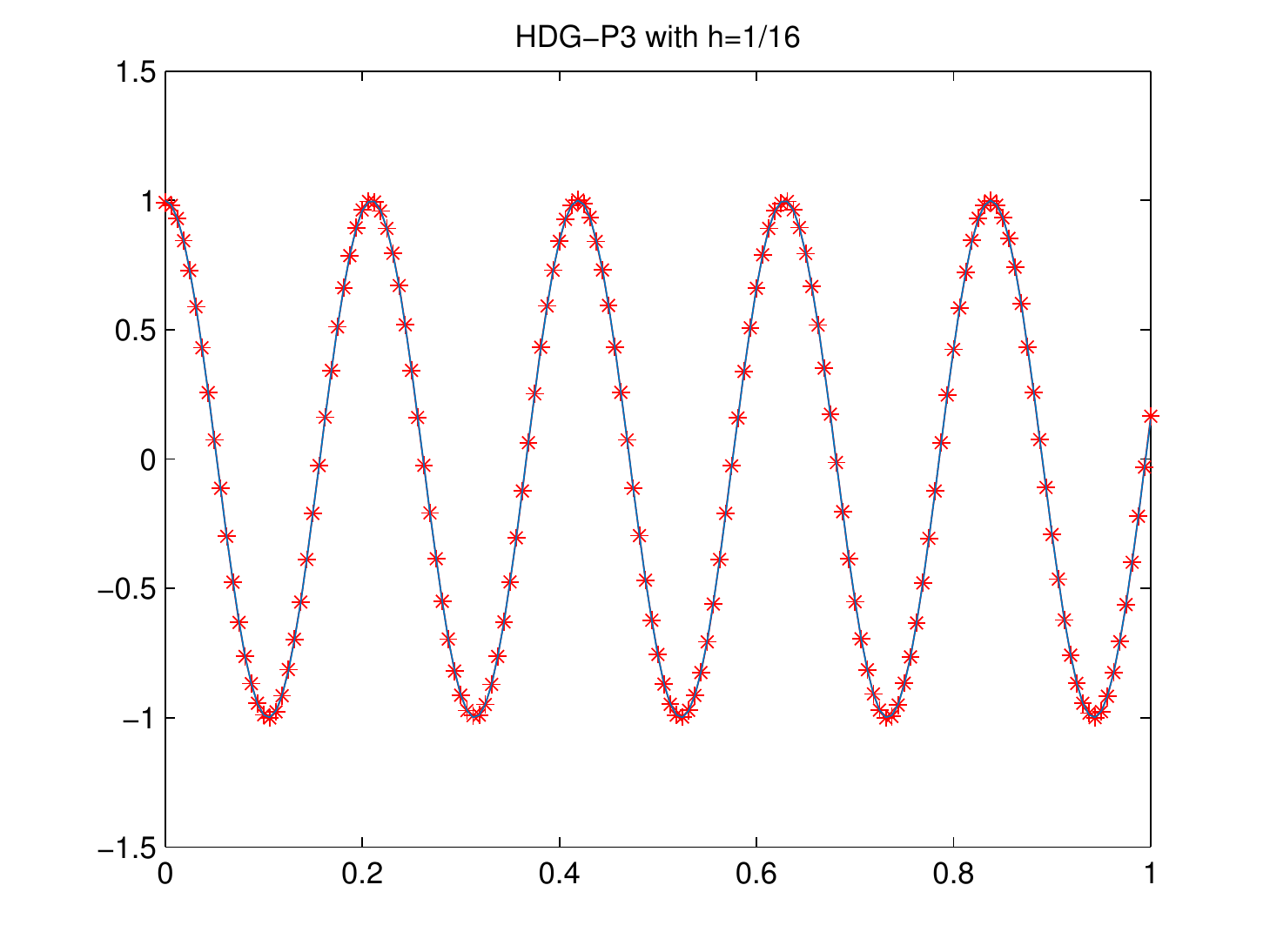}
\includegraphics[width=3in,height=1.6in]{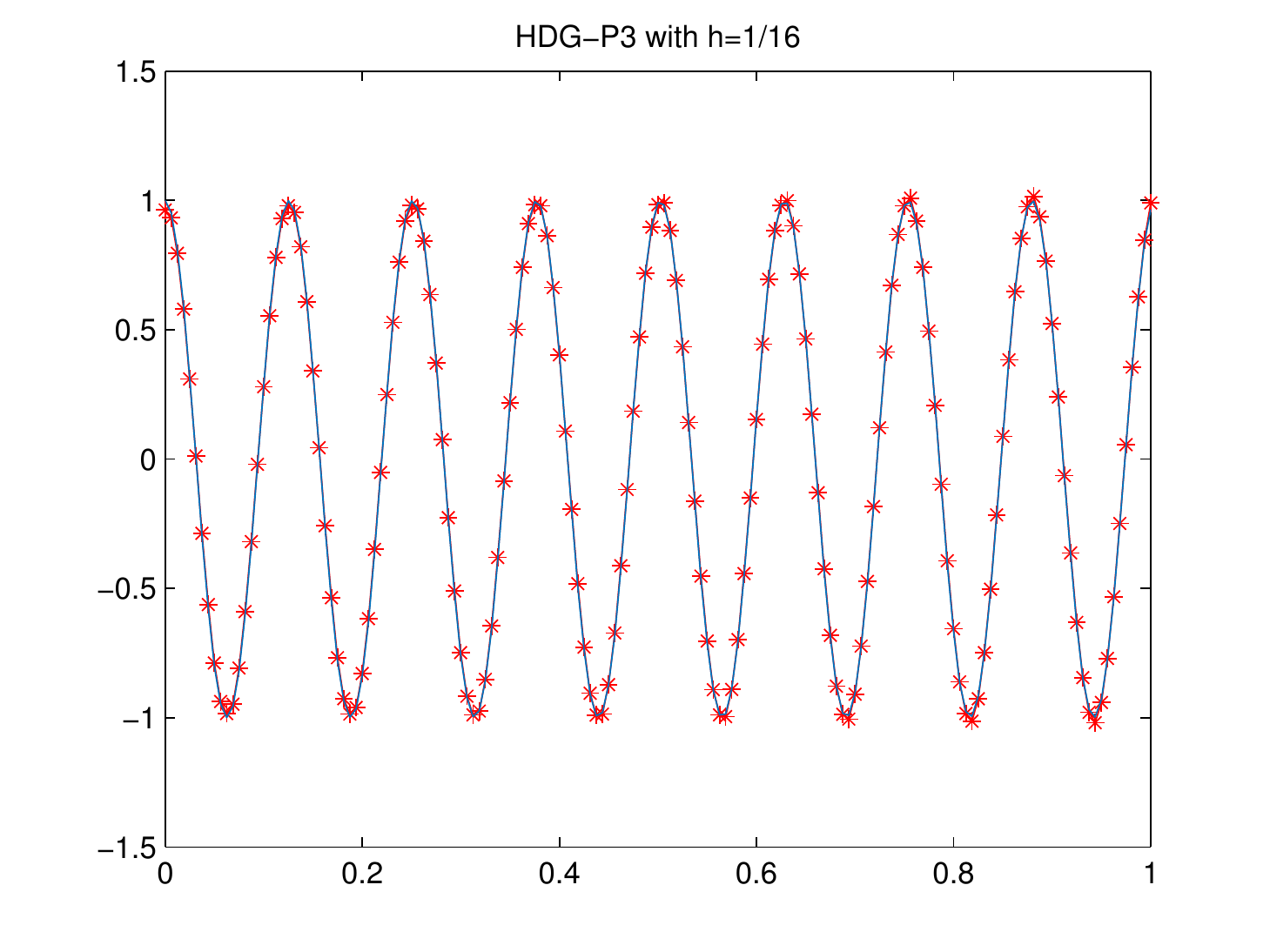}
\caption{\footnotesize The traces of the real part of the first
component of the HDG-P2 and HDG-P3 solutions (top and bottom) for $\kappa=30$ and $\kappa=50$ (left and right) on the mesh with $h=1/16$. The traces of the real part of the first component of the exact solution are plotted in the blue lines.}\label{fig6}
\end{figure}

For more detailed comparison between the HDG methods with different polynomial order approximations, we consider the problems with wave number $\kappa=30,50$.  We restrict the solution plot in the line segment $\{(x,y,z): x=0.5, y=0.5, 0\leq z
\leq 1\}$ and  observe the traces of the real part of the first
component of the HDG solutions. The traces of the real  part of the
first component of the exact solution are also  plotted in the blue
lines in Figure \ref{fig5} and Figure \ref{fig6}. The left graphs of Figure \ref{fig5}
display the traces of the real part of the first component of the
HDG-P1 solution on the meshes with $h=1/16$ and $h=1/32$ for $\kappa=30$, while the right graphs of Figure \ref{fig5} show the same traces for $\kappa=50$. Figure \ref{fig6} displays the traces of the real part of the first component of the HDG-P2 and HDG-P3 solutions on the mesh with $h=1/16$ for $\kappa=30,50$ (left, right). On the coarse mesh with $h=1/16$, the
shapes of the HDG-P2 and HDG-P3 solutions are roughly the same as the
exact solution while the shape of the HDG-P1 solution does not match
the exact solution well. We can also observe that the HDG solutions of
high order polynomial approximations on the mesh with $h=1/16$ perform even
better than the HDG-P1 solution on the mesh with $h=1/32$ especially for $\kappa=50$, which
shows the advantage of the HDG method with high order polynomial approximation for
the time-harmonic Maxwell problem with high wave number. Thus, although the phase error appears in the cases of coarse mesh and low order polynomial approximation, it can be reduced in the fine meshes or by high order polynomial approximations.

\section*{Acknowledgment} The authors are very grateful to the anonymous referees and the editor for their many valuable comments and suggestions that led to an improved presentation of this paper.


\begin{thebibliography}{10}

\bibitem{Adams}
R. Adams, Sobolev Spaces, Academic Press, New York, 1975.

\bibitem{Cangiani14}
A. Cangiani, E. H. Georgoulis, and P. Houston, hp-Version discontinuous Galerkin methods on polygonal
and polyhedral meshes, Math. Models Methods Appl. Sci., 24 (2014), pp. 2009--2041.


\bibitem{Cockburn04}
B. Cockburn, F. Li, and C.-W. Shu, Locally divergence-free discontinuous Galerkin methods for the Maxwell equations, J. Comput. Phys., 194 (2004), pp. 588--610.

\bibitem{Chernov2012}
A. Chernov, Optimal convergence estimates for the trace of the polynomial $L^2$-projection operator on a
simplex, Math. Comp., 81(2012), pp. 765--787.


\bibitem{Cockburn09}
B. Cockburn, J. Gopalakrishnan, and R. Lazarov, Unified hybridization of discontinuous
Galerkin, mixed, and continuous Galerkin methods for second order elliptic problems,
SIAM J. Numer. Anal., 47 (2009), pp. 1319--1365.

\bibitem{Cockburn10}
B. Cockburn, J. Gopalakrishnan, and F.-J. Sayas, A projection-based error analysis of HDG methods,
Math. Comp., 79 (2010), pp. 1351--1367.

\bibitem{Brenner07}
S. Brenner, F. Li, and L. Sung, A locally divergence-free nonconforming finite element method for the time-harmonic Maxwell equations, Math. Comp., 70 (2007), pp. 573--595.

\bibitem{Egger13}
H. Egger and C. Waluga, $hp$ analysis of a hybrid DG method for Stokes flow, IMA J. Numer. Anal., 33 (2013), pp. 687--721.

\bibitem{FW2011}
X. Feng and H. Wu, $hp$-discontinuous Galerkin methods for the Helmholtz equation with large wave number, Math. Comp., 80 (2011), pp. 1997--2024.


\bibitem{FW2014}
X. Feng and H. Wu, An absolutely stable discontinuous Galerkin
method for the indefinite time-harmonic Maxwell equations with large
wave number, SIAM J. Numer. Anal.,  {52} (2014), pp. 2356--2380.

\bibitem{FLX2015}
X. Feng, P. Lu, and X. Xu, A hybridizable discontinuous Galerkin method for the time-harmonic Maxwell equations with high wave number, submitted to Comput. Methods Appl. Math., 2015.

\bibitem{Hiptmair02}
R. Hiptmair, Finite elements in computational electromagnetism, Acta. Numer.,11 (2002), pp. 237--239.


\bibitem{Hiptmair11}
R. Hiptmair, A. Moiola, and I. Perugia, Stability results for the time-harmonic Maxwell equations with
impedance boundary conditions, Math. Models Methods Appl. Sci., {21} (2011), pp. 2263--2287.
\bibitem{R.Hiptmair13}
R. Hiptmair, A. Moiola, and I. Perugia, Error analysis of Trefftz-discontinuous Galerkin methods for the time-harmonic Maxwell equations, Math. Comp., {82} (2013), pp. 247--268.
\bibitem{Houston04}
P. Houston, I. Perugia, and  D. Sch\"{o}tzau, Mixed discontinuous Galerkin approximation of the
Maxwell operator, SIAM J. Numer. Anal., {42} (2004), pp. 434--459.
\bibitem{Houston05}
P. Houston, I. Perugia, A. Schneebeli, and D. Sch\"{o}tzau, Interior penalty method for the indefinite
time-harmonic Maxwell equations, Numer. Math., {100} (2005), pp. 485--518.


\bibitem{Melenk14}
J.M. Melenk and T. Wurzer, On the stability of the boundary trace of the polynomial $L^2$ projection on triangles and tetrahedra, Comp. Math. Appl., 67 (2014), pp. 944--965.

\bibitem{Monk2003}
P. Monk, Finite Element Methods for Maxwell's Equations, Oxford University Press, New
York, 2003.

\bibitem{MWYS15}
L. Mu, J. Wang, X. Ye, and S. Zhang, A weak Galerkin finite element method for the Maxwell equations, J. Sci. Comput., to appear, 2015.

\bibitem{Nedelec80}
J. N\'ed\'elec, Mixed finite elements in $R^3$, Numer. Math., 35 (1980), pp. 315--341.

\bibitem{Nedelec86}
J. N\'ed\'elec, A new family of mixed finite elements in $R^3$, Numer. Math., {50} (1986), pp. 57--81.

\bibitem{Nguyena11}
N.C. Nguyen, J. Peraire, and B. Cockburn, Hybridizable discontinuous Galerkin methods
for the time-harmonic Maxwell's equations, J. Comput. Phys., 230 (2011), pp. 7151--7175.

\bibitem{Perugia02}
I. Perugia and D. Sch\"{o}tzau, and P. Monk, Stabilized interior penalty methods for the time harmonic
Maxwell equations, Comput. Methods Appl. Mech. Engrg., {191} (2002), pp. 4675--4697.

\bibitem{Perugia03}
I. Perugia and D. Sch\"{o}tzau, The hp-local discontinuous Galerkin method for low-frequency
time-harmonic Maxwell equations, Math. Comp., 72 (2003), pp. 1179--1214.

\bibitem{Schwab98}
C. Schwab, P- and hp-Finite Element Methods, Oxford University Press, 1998.

\bibitem{Zhong09}
L. Zhong, S. Shu, G. Wittum, and J. Xu, Optimal error estimates for  N\'ed\'elec edge elements
for time-harmonic Maxwell¡¯s equations, J. Comp. Math., {27} (2009), pp. 563--572.

\end{thebibliography}
\end{document}